\newcommand{\rvline}{\hspace*{-\arraycolsep}\vline\hspace*{-\arraycolsep}}
\DeclareMathOperator{\rk}{\mathrm{rk}}
\def\Id{\mathrm{Id}}
\def\un{\mathbbm{1}}
\def\into{\hookrightarrow}
\def\onto{\twoheadrightarrow}
\def\om{\omega}
\def\kk{\mathbbm{k}}
\def\C{\ensuremath{\mathbbm{C}}}
\def\Q{\mathbbm{Q}}
\def\Z{\mathbbm{Z}}
\def\sl{\mathfrak{sl}}
\newcommand{\la}{\lambda}
\def\eps{\varepsilon}
\def\ii{\mathrm{i}}
\def\dd{\mathrm{d}}
\def\Ker{\mathrm{Ker}}
\def\End{\mathrm{End}}
\def\GL{\mathrm{GL}}
\def\QQ0{\ensuremath{Q_0}}
\def\KK{\ensuremath{K}}
\newtheorem{theorem}{Theorem}[section]
\newtheorem{lemma}[theorem]{Lemma}
\newtheorem{remark}[theorem]{Remark}
\newtheorem{corollary}[theorem]{Corollary}
\newtheorem{example}{Example\/}
\newtheorem{proposition}[theorem]{Proposition}
\numberwithin{equation}{section}
\date{October 22, 2020}
\title[Hecke algebras of normalizers of parabolic subgroups]{Hecke algebras of normalizers of parabolic subgroups}
\author{Thomas Gobet}
\address{Institut Denis Poisson, CNRS UMR 7350\\
Facult\'e des Sciences et Techniques\\
Universit\'e de Tours\\
Parc de Grandmont\\
37200 Tours, France}
\email{thomas.gobet@univ-tours.fr}
\author{Ivan Marin}
\address{Laboratoire Ami\'enois de Math\'ematique Fondamentale et Appliqu\'ee, CNRS UMR 7352\\
Universit\'e de Picardie Jules Verne\\
33 rue Saint Leu\\
80039 Amiens, France}
\email{ivan.marin@u-picardie.fr}
\begin{document}

\maketitle

\begin{abstract}
In the context of Hecke algebras of complex reflection groups, we prove that the generalized Hecke algebras of normalizers of parabolic subgroups are
semidirect products, under suitable conditions on the parameters involved in their definition. 
\end{abstract}

\tableofcontents

\section{Introduction}

Let $W$ be a complex reflection group, that is, a finite subgroup of $\GL_n(\C)$ generated by complex
(pseudo-)reflections. Let $W_0$ be a parabolic subgroup of $W$, that is, the pointwise stabilizer
of a subset of $\C^n$, which is also a complex reflection group by a result of Steinberg. In~\cite{YH2}, the second author defined a generalized Hecke algebra $\widetilde{H}_0$ attached to the \emph{normalizer} $N_0 = N_W(W_0)$, which is a natural extension of the Hecke algebra $H_0$ of $W_0$ by the group algebra of $\overline{N_0} = N_0/W_0 = N_W(W_0)/W_0$. This algebra turns out to be particularly useful for understanding (up to Morita equivalence) the `braid subalgebra' of the Yokonuma-Hecke algebras introduced in~\cite{YH1}.

It was proved in~\cite{YH2} that $\widetilde{H}_0$ is a free module over its ring of definition,
with a direct sum decomposition $\widetilde{H}_0 \simeq \bigoplus_{g \in \overline{N_0}} [g H_0]$
as a free $H_0$-module of rank $|\overline{N_0}|$. Since
it has been proven by Muraleedaran and Taylor 
in~\cite{TAYLORNORM} that the extension 
\begin{eqnarray}
\label{eq:sesW}
1 \to W_0 \to N_0 \to \overline{N_0} \to 1
\end{eqnarray}
is always split, it is expected that $\widetilde{H}_0$ is isomorphic to a semidirect (or crossed) product $\overline{N_0} \ltimes H_0$.

In~\cite{NORMARTIN}, Henderson and the authors positively answered this question when
$W$ is a \emph{real} reflection group (and actually in this case the reflection subgroup $W_0$ does not even need
to be parabolic), regardless of the ring of definition $\KK$ and the defining parameters of $\widetilde{H}_0$. In the present paper, we explore the general case, for which conditions need to be added. We
assume that $\KK$ is a domain and denote by $\KK^{\times}$ its group of invertible elements.

Our first main result is the following Theorem (see Theorem~\ref{theo:generic} below for a more precise statement):

\begin{theorem} Let $W_0$ be a parabolic subgroup of $W$. If the defining parameters
of $H_0$ are generic, and $\KK$ is a sufficiently large field of characteristic $0$, then
$\widetilde{H}_0 \simeq \overline{N_0} \ltimes H_0$.
\end{theorem}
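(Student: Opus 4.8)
The plan is to realise $\widetilde{H}_0$ as a crossed product of $H_0$ by $\overline{N_0}$ with values in a factor set, and then to show that, under the stated hypotheses, this factor set is a coboundary. First I would fix a group-theoretic section $s\colon \overline{N_0}\to N_0$ of the split sequence~\eqref{eq:sesW}. Conjugation by $s(g)$ stabilises $W_0$ and permutes its reflections, and since the defining parameters of $H_0$ are chosen constant on $\overline{N_0}$-orbits (which is built into the definition of $\widetilde{H}_0$), it induces an algebra automorphism $\phi_g$ of $H_0$; because $s$ is a homomorphism, $g\mapsto\phi_g$ is an action $\overline{N_0}\to\Aut(H_0)$. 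Using the decomposition $\widetilde{H}_0\simeq\bigoplus_{g\in\overline{N_0}}[gH_0]$, I would then choose generators $T_g$ lifting $g$ with $T_g\,h\,T_g^{-1}=\phi_g(h)$ for $h\in H_0$. Associativity forces $T_gT_{g'}=\gamma(g,g')\,T_{gg'}$ with $\gamma(g,g')\in Z(H_0)^{\times}$, and $\gamma$ is a $2$-cocycle in $Z^2(\overline{N_0},Z(H_0)^{\times})$ for the induced action on the centre. The algebra $\widetilde{H}_0$ is then isomorphic to the genuine semidirect product $\overline{N_0}\ltimes H_0$ precisely when the class $[\gamma]$ vanishes, for then a rescaling $T_g\mapsto\mu(g)T_g$ with $\mu(g)\in Z(H_0)^{\times}$ central makes the $T_g$ into an honest copy of $\overline{N_0}$.

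To analyse $[\gamma]$ I would exploit the hypotheses. Over a sufficiently large field of characteristic $0$ and for generic parameters the algebra $H_0$ is split semisimple, so $H_0\simeq\prod_{\chi\in\Irr(W_0)}\Mat_{d_\chi}(\KK)$ and $Z(H_0)^{\times}\simeq(\KK^{\times})^{\Irr(W_0)}$ as an $\overline{N_0}$-module, the action being the permutation of $\Irr(W_0)$ induced by $\phi$ (transported to the group $W_0$ via Tits' deformation theorem). Decomposing this permutation module into orbits and applying Shapiro's lemma gives
\begin{equation}
H^2\bigl(\overline{N_0},Z(H_0)^{\times}\bigr)\simeq\bigoplus_{O}H^2\bigl((\overline{N_0})_{\chi_O},\KK^{\times}\bigr),
\end{equation}
the sum being over $\overline{N_0}$-orbits $O$ in $\Irr(W_0)$ with representatives $\chi_O$ and stabilisers $(\overline{N_0})_{\chi_O}$. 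Thus $[\gamma]$ splits into one Schur-multiplier class per orbit, each measuring whether the projective representation of the inertia group on the block $\Mat_{d_{\chi_O}}(\KK)$, obtained from $\phi$ and Skolem--Noether, lifts to a linear one.

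The main obstacle is to show that each of these classes vanishes, and here I would run a deformation argument. Specialising the parameters to the group-algebra values turns $\widetilde{H}_0$ into $\KK[N_0]$ and $H_0$ into $\KK[W_0]$, and since the sequence~\eqref{eq:sesW} splits as groups we have the \emph{honest} semidirect product $\KK[N_0]=\overline{N_0}\ltimes\KK[W_0]$, realised by the group elements $s(g)$; its factor set is trivial. As the parameters vary, $\gamma$ varies continuously, while its class lives in the finite Schur multipliers $H^2((\overline{N_0})_{\chi_O},\KK^{\times})$; a continuous family of elements of a finite group is constant, so $[\gamma]$ equals the trivial class of the group-algebra specialisation. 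The delicate points in making this rigorous are, first, that $\widetilde{H}_0$ remains split semisimple with representation theory matching that of $N_0$ under the specialisation, so that the orbit decomposition and the identification of stabilisers are preserved, which is exactly what Tits' deformation theorem furnishes once semisimplicity is known; and second, that the comparison of $[\gamma]$ across the family is genuinely a comparison of the same cohomology class, for which one must track the generators $T_g$ and the idempotents cutting out the blocks consistently through the deformation. Carrying out this bookkeeping is where the genericity and the largeness of $\KK$ do the real work, and constitutes the technical heart of Theorem~\ref{theo:generic}.
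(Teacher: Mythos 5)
Your reduction of the problem to the vanishing of a factor-set class is a reasonable strategy, but the proposal has a genuine gap exactly where the theorem's content lies: the proof that $[\gamma]$ is trivial. The ``continuity'' argument is not a proof, and as stated it does not work. At different parameter values the class $[\gamma]$ lives in \emph{different} groups $H^2\bigl(\overline{N_0},Z(H_0)^{\times}\bigr)$, so there is no well-defined ``continuous variation'' to speak of; to compare classes you would have to define the crossed-product data (the units $T_g$, the action, the central idempotents) over the coordinate ring of a connected locus in parameter space containing both the generic point and the group-algebra point, and then the question becomes whether triviality of the specialized class at the group-algebra point forces triviality at the generic point. Specialization maps in group cohomology are \emph{not} injective -- for instance the units of the generic ring $\kk=\Z[u_{s,i}^{\pm 1}]$ contain a free abelian group of monomials, and classes with values there can die under a specialization $u_{s,i}\mapsto\zeta$ while remaining nontrivial generically -- so ``trivial at a special point'' does not propagate in the direction you need. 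Your own text concedes that this bookkeeping ``constitutes the technical heart'' of the theorem; that heart is precisely what is missing. There is also a secondary flaw in the setup: conjugation by $s(g)$ is an automorphism of the \emph{group} $W_0$, not of its deformation $H_0$; at the Hecke level only an outer action of $\overline{N_0}$ is canonical (lifts $b\in\widetilde{B}_0$ of $g$ are well defined up to $B_0$, hence up to inner automorphisms), so ``$g\mapsto\phi_g$ is an action, hence $\gamma$ is central-valued'' requires an argument (it can be repaired using split semisimplicity and Skolem--Noether, but not by invoking the group-theoretic section alone).

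The paper avoids all of this cohomological machinery. Its proof of Theorem~\ref{theo:generic} constructs, via the monodromy of a Cherednik-type integrable $1$-form $h\om_0=h\sum_{L\in\mathcal{A}_0}\varphi_L\om_L$ over $X/N_0$ (Proposition~\ref{prop:cherednik}), an \emph{explicit algebra isomorphism} $\Phi:\widetilde{H}_0\to\KK N_0$ carrying $H_0$ onto $\KK W_0$, valid over $\KK=\C((h))$ when the exponents $\la_{L,k}$ are $\Q$-linearly independent (surjectivity comes from Nakayama over $\C[[h]]$, bijectivity from the equality of ranks given by the freeness theorems), and then extends this to any sufficiently large field containing $\kk$. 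Given $\Phi$, the group-theoretic complement $U_0\le N_0$ of \cite{TAYLORNORM} -- the same input you use via the splitting of~\eqref{eq:sesW} -- is simply transported: $M=\Phi^{-1}(U_0)$ is a group of units of $\widetilde{H}_0$ normalizing $H_0$ with $\widetilde{H}_0=\bigoplus_{g\in M}gH_0$, whence $\widetilde{H}_0=M\ltimes H_0\simeq\overline{N_0}\ltimes H_0$ (compare Lemma~\ref{lem:semidirect}). In other words, instead of proving that an obstruction class vanishes, the paper produces an isomorphism $\widetilde{H}_0\simeq\KK N_0$ for which the splitting becomes the group-theoretic one; if you want to salvage your approach, you would in effect need to prove such a comparison isomorphism anyway, at which point the cocycle formalism is superfluous.
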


However, this does not apply in general to the non-generic case.
In this paper we find explicit, sufficient algebraic conditions to ensure such a semidirect product
decomposition, using the classification of irreducible complex reflection groups. Indeed, it is not difficult to
see that for this problem we can assume that $W$ is irreducible.

In the case of the general series $G(de,e,n)$ of complex reflection groups,
we will prove that these problems can be reduced to the case
of a parabolic subgroup of the form
$$W_0=G(de, e, n_0)\times\prod_{k=1}^{n} G(1,1,k)^{b_k}$$
(see Section~\ref{sect:deen} for precise definitions). The Hecke algebra of the group $G(1,1,k)$ is the Hecke algebra
of type $A_{k-1}$ associated to the symmetric group $\mathfrak{S}_k$ (considered as
a Coxeter group). Let $\Delta(k)$ denote the element of its standard
basis associated to the element of maximal length of $\mathfrak{S}_k$ -- which is the image of Garside's fundamental
element of the usual braid group on $k$ strands. Our main result for the general series
is the following one, proved in Sections~\ref{sec:gr1n} and \ref{sect:deen}:

\begin{theorem} \label{theo:introGdeen} Let $W = G(de,e,n)$ and $W_0=G(de, e, n_0)\times\prod_{k=1}^{n} G(1,1,k)^{b_k}$.
Then $\widetilde{H}_0 \simeq \overline{N_0} \ltimes H_0$
as soon as, whenever $b_k \neq 0$, 
\begin{itemize}
\item  there exists $T_k \in \KK[X]$
such that $T_k(\Delta(k))^{-de} = \Delta(k)^2$, where the equality holds inside the Iwahori-Hecke algebra of type $A_{k-1}$, and
\item if moreover $e \neq 1$ and $n_0 \geq 1$, there exists 
$T_{0,k} \in \KK[X]$ such that $T_{0,k}(\sigma)^{de} = \sigma^{kd}$
whenever $\sigma$ is a braided reflection associated to the hyperplane $z_1 = 0$.

\end{itemize}
In particular the second condition is void when $d= 1$, as $z_1 = 0$ is not a reflecting hyperplane in that case.

\end{theorem}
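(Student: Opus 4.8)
The plan is to realize $\widetilde{H}_0$ as a genuine crossed product by lifting a group-theoretic section all the way up to the level of the Hecke algebra. By the reductions recalled in the introduction we may assume that $W=G(de,e,n)$ is irreducible and that $W_0=G(de,e,n_0)\times\prod_{k=1}^{n}G(1,1,k)^{b_k}$. Fix a section $s\colon\overline{N_0}\to N_0$ of \eqref{eq:sesW}, which exists by \cite{TAYLORNORM}. Using the free decomposition $\widetilde{H}_0\simeq\bigoplus_{g\in\overline{N_0}}[gH_0]$, it suffices to produce units $u_g\in\widetilde{H}_0^{\times}$, one for each $g\in\overline{N_0}$, such that $u_gH_0=[gH_0]$, such that conjugation by $u_g$ realizes the action of $g$ on $H_0$, and such that $g\mapsto u_g$ is a group homomorphism. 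Indeed, such a family yields an algebra map $\overline{N_0}\ltimes H_0\to\widetilde{H}_0$ that is surjective by the module decomposition and hence an isomorphism after comparing ranks of free $H_0$-modules (both equal to $|\overline{N_0}|$). Since $g\mapsto u_g$ is determined by its values on a generating set and by the defining relations of $\overline{N_0}$, the whole problem reduces to finding suitable lifts of a set of generators.

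I would next describe $\overline{N_0}$ by a convenient generating set of three types. First, the permutations of the $b_k$ mutually isomorphic factors $G(1,1,k)$ contribute symmetric groups $\mathfrak{S}_{b_k}$; their lifts present no difficulty, since the corresponding braided elements already satisfy the required relations in $\widetilde{H}_0$, these generators being of the ``real'' type handled in \cite{NORMARTIN}. Second, for each $k$ with $b_k\neq 0$ there is a diagonal \emph{rotation} element $\tau_k$ twisting a single block $G(1,1,k)$ by a root of unity of order $de$; in $\overline{N_0}$ this element has order $de$, but its natural lift $\tilde\tau_k\in\widetilde{H}_0$ satisfies instead $\tilde\tau_k^{\,de}=\Delta(k)^2$, the full twist, because going $de$ times around the block picks up the generator of the centre of the braid group of $\mathfrak{S}_k$. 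Third, when $e\neq 1$ and $n_0\geq 1$ there is a generator coupling the factor $G(de,e,n_0)$ to the blocks, whose natural lift $\tilde g$ satisfies a relation of the shape $\tilde g^{\,de}=\sigma^{kd}$, where $\sigma$ is the braided reflection attached to the hyperplane $z_1=0$.

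The heart of the argument is to correct these lifts by central invertible factors so that the order relations become exact. For $\tau_k$ I would set $u_{\tau_k}=\tilde\tau_k\,T_k(\Delta(k))$; since $\Delta(k)^2$ is central in the Hecke algebra of $\mathfrak{S}_k$, so is $T_k(\Delta(k))$, and it commutes with $\tilde\tau_k$, whence $u_{\tau_k}^{\,de}=\tilde\tau_k^{\,de}\,T_k(\Delta(k))^{de}=\Delta(k)^2\,T_k(\Delta(k))^{de}$. The first hypothesis $T_k(\Delta(k))^{-de}=\Delta(k)^2$ is exactly what forces $u_{\tau_k}^{\,de}=1$, so that $u_{\tau_k}$ has the correct order $de$ while still spanning the summand $[\tau_kH_0]$, as $T_k(\Delta(k))$ is a unit of $H_0$. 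Likewise, when $e\neq 1$ and $n_0\geq 1$, I would set $u_g=\tilde g\,T_{0,k}(\sigma)^{-1}$; granting the requisite commutativity (part of the relation check below), the second hypothesis $T_{0,k}(\sigma)^{de}=\sigma^{kd}$ turns $\tilde g^{\,de}=\sigma^{kd}$ into $u_g^{\,de}=1$. When $d=1$ the hyperplane $z_1=0$ carries no reflection, so no such generator occurs and the second condition is vacuous, consistent with the statement.

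The main obstacle is not the order relations in isolation but verifying that the corrected units $u_g$ satisfy \emph{all} the defining relations of $\overline{N_0}$ simultaneously, so that $g\mapsto u_g$ extends to a group homomorphism. This amounts to checking the mixed braid relations between the rotation units $u_{\tau_k}$, the coupling units $u_g$, and the permutation generators. Here the explicit polynomial form of the correction factors is decisive: because $T_k(\Delta(k))$ and $T_{0,k}(\sigma)$ are polynomials in central elements (the full twist, respectively a local braided reflection), they commute with the other generators exactly as the corresponding group elements do, so the corrections do not disturb the relations already satisfied by the $\tilde g$'s. Once the homomorphism $\overline{N_0}\to\widetilde{H}_0^{\times}$ is established, surjectivity of the crossed product map together with the rank comparison completes the proof.
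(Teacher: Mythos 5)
Your proposal follows the same overall strategy as the paper: describe a complement $U_0$ of $W_0$ in $N_0$ by generators, lift these generators to $\widetilde{B}_0$, correct the lifts of the order-$de$ generators by polynomial factors $T_k(\Delta)$, $T_{0,k}(\sigma)$ so that the order relations hold in $\widetilde{H}_0$, and conclude by the crossed-product criterion (Lemma~\ref{lem:semidirect}). However, what you present is the scaffolding of that proof with the load-bearing parts asserted rather than proved. The identity $\tilde\tau_k^{\,de}=\Delta(k)^2$ in $\widetilde{B}_0$ is not a formality about ``going $de$ times around the block'': it is Lemma~\ref{lem:power_d_garside}, established by an explicit rewriting of $(\tau_{d_k}\tau_{d_k+1}\cdots\tau_{d_k+k-1})^{de}$ in which meridians around hyperplanes outside $\mathcal{A}_0$ are identified and discarded modulo $Q_0$; without this computation (and its analogue producing $\sigma_0^{kd}$ for the coupling generator) the hypotheses of the theorem have no connection to the geometry, since they are precisely calibrated to cancel these residual elements. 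Likewise, the verification that the uncorrected lifts satisfy the braid relations is not automatic; in the paper it rests on the Bremke--Malle length additivity (Lemma~\ref{lift_rex}) and, for the squares of the permutation-type generators, on an embedding argument reducing to the Howlett complement result of \cite{NORMARTIN}.

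Two further points are genuinely wrong or missing. First, your justification that the corrections preserve the mixed relations is based on a false claim: $T_k(\Delta(k))$ is \emph{not} central in the Hecke algebra of $\mathfrak{S}_k$ --- centrality of the full twist $\Delta(k)^2$ does not transfer to the half twist, and for $k\geq 3$ conjugation by $\Delta(k)$ induces the nontrivial diagram automorphism. What actually rescues the type-$B_2$ relation between the corrected rotation lift and $\sigma_k^{(1)}$ is the swap $\sigma_k^{(1)}\Delta_{J_k^1}=\Delta_{J_k^2}\sigma_k^{(1)}$ combined with the commutation of $T(\Delta_{J_k^1})$ and $T(\Delta_{J_k^2})$, as computed in Section~\ref{sect:orderrelsGr1n}. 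Second, your argument at best treats $W=G(de,1,n)$: for $e\neq 1$ the braid group of $G(de,e,n)$ is not the type-$B_n$ Artin group carrying the Garside lifts you use, and the paper needs the whole apparatus of Section~\ref{sect:deen} (the group $B^{\sharp}$, the injectivity of $\widetilde{H}_0\to\widetilde{H}_0^{!}$ when $W_0=W_0^{!}$, and restriction of the morphism built for $G(de,1,n)$) to descend. Relatedly, when $e\neq 1$ and $n_0\geq 1$ the complement contains no ``pure rotation'' of order $de$: the element $s_k^{(0)}$ alone does not lie in $G(de,e,n)$, and the actual generator is $s_0^{-k}s_k^{(0)}$, whose lift must be corrected by the single product $c_k=T_{0,k}(\sigma_0^e)T_k(\Delta_{J_k^1})$; this is why \emph{both} polynomial hypotheses are needed for one and the same generator, not for two separate ones as your generator scheme suggests.
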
 

In most cases, the above conditions on the existence of polynomials
have a natural translation in terms of the parameters of the Hecke algebra
of $W_0$ (see Lemmas~\ref{lem:polfield} and \ref{lem:typea} below).

\medskip

In exceptional types, we determine semidirect product decompositions for all parabolic subgroups of maximal rank,
as well as for some parabolic subgroups of rank $1$. In rank $3$, we do this for
all the groups except $G_{27}$ in the notation of Shephard and Todd. Since, in rank $3$,
non-trivial proper parabolic subgroups either have rank $1$ or are maximal, this
solves our problem for these groups (that is, for $G_{24}$, $G_{25}$ and $G_{26}$).
We also solve it for the rank $4$ group $G_{32}$, for which we also have
to consider parabolic subgroups of rank $2$. In particular, we get the
following general result (see Theorem~\ref{theo:maxipars} for more details), where $B_0$ is the braid group of $W_0$ and the element $z_{B_0}$ is defined in Section~\ref{sect:genres}.

\begin{theorem} \label{theointro:maxis} Let $W$ be an irreducible complex reflection group of exceptional type,
and $W_0$ a parabolic subgroup of maximal rank. Let $z_{B_0}$ be the canonical
positive central element of $B_0$. Except for two exceptions for ranks $3$ and $5$,
if there exists $T \in K[X]$ such that the equality $T(z_{B_0})^{|Z(W)|} = z_{B_0}^{-|Z(W_0)|}$
holds inside $H_0$, then $\widetilde{H}_0 \simeq \overline{N_0} \ltimes H_0$.

\end{theorem}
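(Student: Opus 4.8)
The plan is to reduce the statement about the existence of a polynomial $T$ to the general criterion for when $\widetilde{H}_0$ splits as a semidirect product. The key structural fact I would start from is the direct sum decomposition $\widetilde{H}_0 \simeq \bigoplus_{g \in \overline{N_0}} [g H_0]$ as a free $H_0$-module. A semidirect product decomposition $\widetilde{H}_0 \simeq \overline{N_0} \ltimes H_0$ amounts to finding, for each $g \in \overline{N_0}$, a lift $\widetilde{g} \in \widetilde{H}_0$ that is invertible, normalizes $H_0$ (inducing the correct action coming from the conjugation action of $N_0$ on $W_0$), and such that the assignment $g \mapsto \widetilde{g}$ is multiplicative — i.e.\ realizes a splitting of a cocycle. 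Since the group extension~\eqref{eq:sesW} is already known to split by Muraleedaran--Taylor, the obstruction is purely at the level of the Hecke algebra: one must show that the relevant $2$-cocycle valued in $H_0^{\times}$ (equivalently in the center, after the action is accounted for) is a coboundary.

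\textbf{Reduction to maximal-rank parabolics and the rôle of $z_{B_0}$.} For a parabolic subgroup $W_0$ of \emph{maximal} rank, the quotient $\overline{N_0} = N_W(W_0)/W_0$ is abelian and in fact cyclic in almost all exceptional cases (generated by the image of a suitable root of the full twist); this is where the two stated exceptions in ranks $3$ and $5$ presumably come from, namely the cases where $\overline{N_0}$ fails to be cyclic of the expected form. Granting cyclicity, a generator $\bar{c}$ of $\overline{N_0}$ of order $m$ lifts to an element $c \in N_0$ with $c^{m} \in W_0$, and by general Garside/braid-group considerations $c^{m}$ is a central positive element of $B_0$ — up to the appropriate power this is exactly $z_{B_0}$. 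More precisely, the canonical positive central element $z_{B_0}$ of the braid group $B_0$ and the full twist of $B$ differ by the relation controlled by the ratio $|Z(W)|/|Z(W_0)|$, which is why those two integers appear in the exponents. I would make this correspondence precise: the obstruction to splitting is the requirement that the image of $c^{m}$ in $H_0^{\times}$ admit an $m$-th root of the correct type, and the central element whose root one needs is a monomial in $z_{B_0}$.

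\textbf{Translating the cocycle condition into the polynomial equation.} Here is the crux. A naïve lift $\widetilde{c}$ of $\bar{c}$ satisfies $\widetilde{c}^{\,m} = z_{B_0}^{a}$ for the appropriate exponent $a$ (read off from $|Z(W)|, |Z(W_0)|$), and to correct it to a genuine multiplicative splitting one must multiply $\widetilde{c}$ by a central unit $u \in H_0$ with $u^{m} = z_{B_0}^{-a}$. The point of the polynomial hypothesis is that $z_{B_0}$ and all its central powers lie in the commutative subalgebra $\KK[z_{B_0}] \subseteq Z(H_0)$, so seeking such a unit $u$ as a \emph{polynomial} $T(z_{B_0})$ is exactly the natural and, in the relevant cases, the \emph{only available} way to produce it. Matching exponents, the condition $u^{m} = z_{B_0}^{-a}$ with $a$ and $m$ expressed through $|Z(W)|$ and $|Z(W_0)|$ becomes precisely $T(z_{B_0})^{|Z(W)|} = z_{B_0}^{-|Z(W_0)|}$. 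Thus, granting such a $T$, I would set $u = T(z_{B_0})$, define $\widetilde{c} := u^{-1} \cdot (\text{naïve lift})$, verify $\widetilde{c}^{\,m} = 1$, and check that $\widetilde{c}$ still normalizes $H_0$ inducing the correct automorphism (automatic, since a central multiplier does not change the conjugation action). This yields the required splitting homomorphism $\overline{N_0} \to \widetilde{H}_0^{\times}$ and hence the semidirect product decomposition.

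\textbf{The main obstacle} will be the case-by-case verification that the above cyclic-generator analysis is complete and correctly captures the full cocycle — in particular, handling the two exceptional cases in ranks $3$ and $5$ where $\overline{N_0}$ is not cyclic (or where the central element $z_{B_0}$ does not by itself account for all of $c^{m}$), and ensuring that the exponents $|Z(W)|$ and $|Z(W_0)|$ really are the integers that appear, which requires knowing the precise relationship between $z_{B_0}$, the full twist of $B$, and the orders of the centers. I expect the bulk of the genuine work to be in pinning down this exponent bookkeeping uniformly across the exceptional types and in isolating exactly which two maximal-rank parabolics escape the argument, rather than in the algebra of the splitting construction itself, which is formal once the polynomial $T$ is supplied.
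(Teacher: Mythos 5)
Your twisting mechanism --- multiply a na\"ive lift of the generator of $\overline{N_0}$ by a central unit $T(z_{B_0})$, check that the corrected lift has the right order, and invoke the splitting criterion (Lemma~\ref{lem:semidirect}) --- is exactly the paper's, and you are right that this part is formal. But everything you defer as ``exponent bookkeeping'' and ``the main obstacle'' \emph{is} the paper's proof, and your structural claims about it are either unjustified or wrong. The key input you never identify is Muraleedaran--Taylor's theorem (\cite{TAYLORNORM}, Theorem 5.5): for a maximal-rank parabolic of an exceptional group one has $N_0 = W_0 \times Z(W)$ in all but finitely many cases. Hence $\overline{N_0}$ is not merely ``cyclic, generated by a suitable root of the full twist'': it is canonically $Z(W)$, generated by the image of the central element $z_B$ of the \emph{ambient} braid group $B$, and one takes $z_B$ itself as the lift. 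With that choice the exponent relation is immediate and requires no Garside theory: $z_B^{|Z(W)|}$ and $z_{B_0}^{|Z(W_0)|}$ are both full loops in their respective pure braid groups, so $z_B^{|Z(W)|} z_{B_0}^{-|Z(W_0)|} \in \QQ0$, and then $(z_B T(z_{B_0}))^{|Z(W)|} = z_{B_0}^{|Z(W_0)|}\, T(z_{B_0})^{|Z(W)|} = 1$ inside $\widetilde{H}_0$ under the hypothesis. By contrast, your claim that for an \emph{arbitrary} lift $c$ of an \emph{arbitrary} cyclic generator the power $\widetilde{c}^{\,m}$ is ``by general Garside/braid-group considerations'' a central positive element of $B_0$, equal to a power of $z_{B_0}$, is false as stated and is precisely what cannot be taken for granted: where $z_B$ fails to generate $\overline{N_0}$, the paper has to exhibit a generator and verify its power relation by explicit computation (CHEVIE/GAP4).

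Your diagnosis of the two exceptions is also incorrect. In both exceptional pairs, $(G_{25},(\Z/3\Z)^2)$ and $(G_{33},D_4)$, the group $\overline{N_0}$ \emph{is} cyclic, of order $6$; what fails is the equality $N_0 = W_0 \times Z(W)$, i.e.\ $\overline{N_0}$ strictly contains the image of $Z(W)$ (which has order $3$, resp.\ $2$). Consequently the generator cannot be taken to be $z_B$, and the explicitly computed generators satisfy power relations with different exponents --- $((\sigma_1\sigma_2\sigma_3)^2)^6 \equiv z_{B_0}^3$ for $G_{25}$, and $c_1^6 \equiv z_{B_0}^2$ for $G_{33}$ --- which is exactly why the hypothesis must be replaced by a different polynomial condition in those two cases. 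Moreover, the dichotomy is not ``generic case plus two exceptions'': the equality $N_0 = W_0 \times Z(W)$ also fails for $G_{13}$ and $G_{15}$ in rank $2$ and for $G_{35}=E_6$, and the paper treats these by separate ad hoc arguments (embedding $B_{13}$ into the Artin group of type $I_2(6)$, embedding $B_{15}$ into type $B_2$, and quoting the real-reflection-group theorem for $E_6$), even though they do not appear as exceptions in the final statement. A proof along your lines would silently skip all of these cases, so as written it establishes the theorem only where your unverified structural assumptions happen to hold.
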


In the last Section~\ref{sect:remaining}, we explore the remaining exceptional cases. There, we explain
in particular why a systematic exploration failed for the largest cases, and we nevertheless
manage to solve the problem for some of them, including all the (Shephard) groups whose braid group
is an Artin group.

\medskip
\textbf{Acknowledgments}. The first author thanks Vincent Beck and Anthony Henderson
for useful discussions. He was funded by Australian Research Council grant DP170101579 at early stages of this work.
Both authors thank Gunter Malle for a careful reading and annotations.

\section{General results}
\label{sect:genres}

\subsection{The Hecke algebra of a normalizer}

Let $W< \GL_n(\C)$ be a finite complex
reflection group
and $\mathcal{A}$ be the collection of its reflecting hyperplanes.
For such a collection of hyperplanes, we shall use the set-theoretic notation $\bigcup \mathcal{A}$
for the union of its elements. Its complement
$X = \C^n \setminus \bigcup \mathcal{A}$ is acted upon by $W$, and the braid group of $W$ is defined by $B = \pi_1(X/W)$. We denote $\pi : B \onto W$ the natural projection. Its kernel $P =\pi_1(X)$ is called the pure braid group of $W$. Finally, $\mathcal{A}$ is in 1-1 correspondence with
the set of distinguished reflections, namely the reflections $s \in W$ whose non-trivial eigenvalue
is equal to $\zeta_m$ for $\zeta_m = \exp(2 \pi \ii/m) \in \C^{\times}$ and $m$ equal to the order of
the cyclic subgroup fixing $\Ker(s - 1)$.

The group $B$ contains an important central element, which we denote $z_B$. When $W$ is irreducible, its center $Z(W)$ is cyclic of some order $m$, generated by
$\zeta_m \Id$.
 In this setting, $z_B$ is the homotopy class inside $X/W$ of the path $t \mapsto \exp(2 \pi \ii t /m) .*$, where $* \in X$ is the chosen base-point. In the general case, the ambient space $\C^n$ admits a canonical direct sum decomposition
$\C^n \simeq \C^{n_1}\oplus \dots \oplus \C^{n_r}$ yielding a decomposition
$W \simeq W_1 \times \dots \times W_r$, where $W_i < \GL_{n_i}(\C)$
is an irreducible reflection group. Letting $m_i = |Z(W_i)|$,
then $z_B$ is the homotopy class inside $X/W$ of the path $t \mapsto (\exp(2 \pi \ii t /m_1) .*_1,\dots,\exp(2 \pi \ii t /m_r) .*_r)$,
where $* = (*_1,\dots,*_r) \in X \subset \C^{n_1}\oplus \dots \oplus \C^{n_r}$ is the chosen base-point.
Finally, $B$ also contains as remarkable elements the braided reflections associated to the reflections of $W$
(see e.g. \cite{BMR,BESSISKPI1} for their precise geometric definition).

We recall from~\cite{BMR} the construction of the Hecke algebra $H$ of $W$ over some commutative ring $\KK$. It is defined using parameters $u_{i,s} \in \KK^{\times}$ for $s$ running among the distinguished reflections of $W$, where $0 \leq i < o(s)$ and $u_{i,s} = u_{i,t}$ when $s,t$ belong to the same conjugacy class. Then $H$ is the quotient of $\KK B$
by the relations $\prod_{i} (\sigma - u_{i,s}) = 0$ for every braided reflection $\sigma$ associated to $s$ -- so that its most general definition ring is the ring of Laurent polynomials $\Z[u_{i,s}^{\pm 1}]$. Its basic structural property is the now proven
BMR freeness conjecture, as a combination of \cite{ARIKI,ARIKIKOIKE,BROUEMALLE,HECKECUBIQUE,CYCLO,MARINPFEIFFER,CHAVLI,G20G21,TSUCHIOKA}.

\begin{theorem} \label{theo:freeness}%
The algebra $H$ is a free $\KK$-module of rank $|W|$.
\end{theorem}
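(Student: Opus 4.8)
The plan is to follow the standard two-step strategy for such freeness statements: produce a spanning set of $H$ over $\KK$ of cardinality exactly $|W|$, and then certify that it is a basis by a specialization argument; it is convenient to work over the universal ring $A = \Z[u_{i,s}^{\pm 1}]$ and deduce the general case from $H \simeq H_A \otimes_A \KK$. I would first reduce to the case where $W$ is irreducible. The canonical decomposition $W \simeq W_1 \times \cdots \times W_r$ recalled above corresponds to a product decomposition of the hyperplane arrangement, whence $X \simeq X_1 \times \cdots \times X_r$, $B \simeq B_1 \times \cdots \times B_r$, and $H \simeq H_1 \otimes_\KK \cdots \otimes_\KK H_r$. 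As $|W| = \prod_i |W_i|$ and a tensor product of free modules is free of the product rank, it suffices to treat each factor, and by the Shephard--Todd classification an irreducible $W$ is either a member of the infinite family $G(de,e,n)$ or one of the $34$ exceptional groups $G_4, \dots, G_{37}$.

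For the upper bound I would exhibit a subset $\Sigma \subset H$ with $|\Sigma| = |W|$ that spans $H$ over $\KK$. Fixing a set-theoretic section of $\pi \colon B \onto W$ and lifting it into $H$, one rewrites an arbitrary word in the braided reflections in terms of these $|W|$ lifts, using the braid relations together with the order-$o(s)$ relations $\prod_i (\sigma - u_{i,s}) = 0$ to reduce length. That this rewriting terminates with no more than $|W|$ elements is exactly the spanning-set content of the Broué--Malle--Rouquier conjecture. For the infinite series this is furnished by the Ariki--Koike basis of the cyclotomic Hecke algebra of $G(d,1,n)$ together with a Clifford-theoretic descent to $G(de,e,n)$; for the exceptional groups it is verified case by case, several of the largest cases relying on computer algebra.

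The lower bound, and the conclusion, come from a specialization. Over $A$, sending $u_{i,s} \mapsto \zeta_{o(s)}^i$ turns each relation $\prod_i(\sigma - u_{i,s}) = 0$ into $\sigma^{o(s)} = 1$, so that $H_A$ specializes onto the group algebra $\KK W$, which has dimension $|W|$. Hence the images of the elements of $\Sigma$ span a space of dimension $|W|$; as $|\Sigma| = |W|$, these images form a basis, so any $A$-linear relation among the elements of $\Sigma$ would specialize to a trivial relation and therefore be trivial. Thus $\Sigma$ is an $A$-basis of $H_A$, and base change yields that $H$ is free of rank $|W|$ over $\KK$.

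The hard part --- and the reason the theorem stands only as the combination of the many papers cited --- is establishing the correctly-sized spanning set uniformly across all irreducible types: proving that the rewriting procedure both terminates and never requires more than $|W|$ elements. The specialization furnishing the lower bound is, by contrast, soft, but it is indispensable, since it is precisely what excludes the a priori possibility that the spanning set is redundant.
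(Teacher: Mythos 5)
First, a point of comparison: the paper does not prove this theorem at all --- it is the BMR freeness conjecture, and the paper simply quotes it as the combination of the nine cited references. Your proposal reconstructs the standard reduction underlying those references (spanning set of size $|W|$, which is the hard case-by-case content, plus a ``soft'' certificate of linear independence), and that architecture is the right one. However, the soft step, which is the only part you actually argue rather than cite, contains a genuine error. The claim that ``any $A$-linear relation among the elements of $\Sigma$ would specialize to a trivial relation and therefore be trivial'' is false reasoning: the specialization $u_{i,s} \mapsto \zeta_{o(s)}^i$ is a ring map $A \to \C$ with \emph{nonzero} kernel $\mathfrak{p}$, so a relation $\sum a_i b_i = 0$ whose image vanishes only tells you $a_i \in \mathfrak{p}$, not $a_i = 0$. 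Concretely, over $A = \Z[u^{\pm 1}]$ the module $M = A^2/A\cdot(u-1,0)$ is spanned by two elements whose images at the point $u=1$ form a basis of that fiber, yet $(u-1)e_1 = 0$ and $M$ is not free. A special fiber of maximal dimension can never certify freeness: fiber dimension is upper semicontinuous, so information at a closed point bounds the generic fiber from \emph{above}, which is the wrong direction. (Two smaller issues: $\zeta_{o(s)}^i \notin \Z$, so this specialization lands in $\Z[\zeta_{o(s)}]$ or $\C$, not in a general $\KK$; and identifying the specialized algebra with the group algebra uses the nontrivial fact, due to Brou\'e--Malle--Rouquier, that $\Ker(B \onto W)$ is normally generated by the elements $\sigma^{o(s)}$.)

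The correct soft step --- the one used in the cited literature, and which this paper itself recalls in Section 2.2 in the ``isomorphism \`a la Cherednik'' --- is a lower bound at the \emph{generic} point of $\Spec A$. The monodromy of the $1$-form $\sum_L h\varphi_L\om_L$ over $\C[[h]]$, with parameters $u_{s_L,k} = \exp(2\pi\ii\la_{L,k}h/m_L)$ and the $\la_{L,k}$ generic, yields (surjectivity by Nakayama, as in the proof of Proposition~\ref{prop:cherednik}) an algebra surjection $H_A \otimes_A \C((h)) \onto \C((h))\,W$; for generic $\la$ the parameters are algebraically independent over $\Q$, so $\C((h))$ contains the fraction field $F$ of $A$, whence $\dim_F (H_A \otimes_A F) \geq |W|$. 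Combined with your spanning set $\Sigma$, the kernel of the surjection $A^{|W|} \onto H_A$ becomes torsion, hence zero since it embeds in $A^{|W|}$; so $H_A$ is free of rank $|W|$, and base change $H \simeq H_A \otimes_A \KK$ concludes. Note there is no circularity: only the surjectivity of the monodromy map is needed here, not the isomorphism statement (which in the paper is deduced \emph{from} the freeness theorem). With this replacement your outline becomes the standard argument.
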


Some cyclic extensions of $H$, related to the normalizer in $\GL_n(\C)$, were
previousy considered in \cite{MALLESPLIT}. Here we consider the following setting.
Let $W_0< W < \GL_n(\C)$ be a reflection subgroup of $W$, that is a subgroup of $W$ generated by some of its
reflections, and denote $\mathcal{A}_0$ its hyperplane arrangement. Recall from \cite{YH2} that such a subgroup is called \emph{full} if, for every
reflection it contains, all the reflections of $W$ fixing the same hyperplane also belong to $W_0$. 
 We consider the normalizer $N_W(W_0)=N_0$ of $W_0$ inside $W$. By definition, the \emph{Hecke algebra $\widetilde{H}_0$ of $N_0$}
as defined in \cite{YH2} is a quotient of the group algebra $\KK \hat{B}_0$ of
$\hat{B}_0 = \pi^{-1}(N_0)= \pi_1(X/N_0)$, by two types of relations:

\begin{itemize}
\item The relations $\sigma^{m_L} = 1$, for every braided reflection $\sigma$ associated to a hyperplane $L \in \mathcal{A}\setminus \mathcal{A}_0$. Here $m_L$ is the order of the pointwise stabilizer of $L$ in $W$. Notice
that $\sigma^{m_L} \in P \subset \hat{B}_0$ even when $\sigma \not\in \hat{B}_0$.
\item The defining relations of the Hecke algebra $H_0$ of $W_0$ on the braided reflections with respect to hyperplanes in $\mathcal{A}_0$.
\end{itemize}

We have the following generalization of Theorem \ref{theo:freeness}, proven in
\cite{YH2}.

\begin{theorem} \label{theo:freenessnorm}%
The algebra $\widetilde{H}_0$ is a free $H_0$-module of
rank $|\overline{N_0}| = |N_0/W_0|$, with a natural direct sum decomposition 
$$
\widetilde{H}_0 = \bigoplus_{g \in \overline{N_0}} [ g H_0]
$$
with $[g H_0]$ a free right $H_0$-module of rank $1$.
As a consequence it is a free $\KK$-module of rank $|N_0|$.
\end{theorem}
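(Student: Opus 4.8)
\medskip
The plan is to reduce the statement to two steps: a purely group-theoretic one producing a short exact sequence
\[
1 \to B_0 \to \widetilde{B}_0 \to \overline{N_0} \to 1,
\]
where $B_0$ is the braid group of $W_0$ and $\widetilde{B}_0$ is the quotient of $\hat{B}_0$ by the relations $\sigma^{m_L} = 1$, together with an algebraic one that deduces the module decomposition from this extension and from the freeness of $H_0$ (Theorem~\ref{theo:freeness} applied to $W_0$). Once the extension is established, the decomposition $\widetilde{H}_0 = \bigoplus_g [gH_0]$ becomes essentially formal.

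For the group-theoretic step, I would argue topologically. Since a point of $X = \C^n \setminus \bigcup\mathcal{A}$ lies on no reflecting hyperplane, its stabilizer in $W$ is trivial, so $N_0$ acts freely on $X$ and $X/W_0 \to X/N_0$ is a regular covering with deck group $\overline{N_0} = N_0/W_0$. This yields
\[
1 \to \pi_1(X/W_0) \to \pi_1(X/N_0) = \hat{B}_0 \to \overline{N_0} \to 1.
\]
The group $\pi_1(X/W_0)$ is not yet $B_0 = \pi_1(X_0/W_0)$, because $X$ removes all of $\mathcal{A}$ while $X_0$ removes only $\mathcal{A}_0$; filling the hyperplanes of $\mathcal{A}\setminus\mathcal{A}_0$ back in gives a surjection $\pi_1(X/W_0) \onto B_0$ whose kernel is the normal closure of the meridians around these hyperplanes, i.e.\ of the elements $\sigma^{m_L}$ with $L \in \mathcal{A}\setminus\mathcal{A}_0$. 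Because $N_0$ normalizes $W_0$ it preserves $\mathcal{A}_0$, hence also $\mathcal{A}\setminus\mathcal{A}_0$, so this normal closure $M$ is stable under conjugation in $\hat{B}_0$; quotienting the extension above by $M$ then produces the desired sequence with $\widetilde{B}_0 = \hat{B}_0/M$.

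For the algebraic step, choose a set-theoretic section $s \colon \overline{N_0} \to \widetilde{B}_0$ of the extension. Then $\KK\widetilde{B}_0 = \bigoplus_{g \in \overline{N_0}} s(g)\,\KK B_0$ is free of rank $|\overline{N_0}|$ as a right $\KK B_0$-module. The algebra $\widetilde{H}_0$ is obtained by imposing the Hecke relations of $W_0$, which generate a two-sided ideal $I_0 \subset \KK B_0$ with $\KK B_0/I_0 = H_0$. The key point is that $I_0$ is stable under conjugation by the $s(g)$: indeed $N_0$ permutes the reflections of $W_0$ preserving $W$-conjugacy classes, and the parameters $u_{i,s}$ are constant on such classes, so $N_0$ permutes the defining relations among themselves. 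Hence $\KK\widetilde{B}_0\, I_0$ is a two-sided ideal of $\KK\widetilde{B}_0$ and
\[
\widetilde{H}_0 = \KK\widetilde{B}_0 / \KK\widetilde{B}_0\, I_0 = \bigoplus_{g \in \overline{N_0}} s(g)\,(\KK B_0/I_0) = \bigoplus_{g \in \overline{N_0}} s(g)\, H_0,
\]
each summand $[gH_0] = s(g)H_0$ being free of rank $1$ over $H_0$. Freeness of $H_0$ over $\KK$ of rank $|W_0|$ (Theorem~\ref{theo:freeness}) then gives that $\widetilde{H}_0$ is free over $\KK$ of rank $|\overline{N_0}|\cdot|W_0| = |N_0|$.

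I expect the main obstacle to be the topological input of the second paragraph: precisely identifying the kernel of $\pi_1(X/W_0) \onto B_0$ with the normal closure of the $\sigma^{m_L}$, and controlling the fact that the covering $X_0/W_0 \to X_0/N_0$ is genuinely branched over $\mathcal{A}\setminus\mathcal{A}_0$ (it is only after removing these hyperplanes, i.e.\ on $X$, that the action becomes free, which is exactly what the relations $\sigma^{m_L}=1$ compensate for). Here the hypothesis that $W_0$ is full is what guarantees a clean partition $\mathcal{A} = \mathcal{A}_0 \sqcup (\mathcal{A}\setminus\mathcal{A}_0)$ with $m_L$ unambiguously defined, so that the meridian computation and the relations $\sigma^{m_L}=1$ match up. The algebraic step, by contrast, is formal once the conjugation-invariance of $I_0$ is checked.
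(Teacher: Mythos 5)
Your proposal concerns a statement that this paper does not itself prove: Theorem~\ref{theo:freenessnorm} is quoted from \cite{YH2}, as is the exact sequence~\eqref{ses} (cited from \cite{YH2}, Section~2.2) and the coset-decomposition isomorphisms $\bigoplus_{b} H_0 b \simeq \widetilde{H}_0$ used in the proof of Lemma~\ref{lem:specialW0W0} (cited from \cite{YH2}, Section~2.3.1). So there is no in-paper argument to compare against; what you have produced is a reconstruction of the cited result, and it is correct, with exactly the architecture the paper attributes to \cite{YH2}. The group-theoretic step is sound: $W$ acts freely on $X$ (Steinberg), so $X/W_0 \to X/N_0$ is a regular covering with deck group $\overline{N_0}$; the kernel of $\pi_1(X/W_0) \onto \pi_1(X_0/W_0) = B_0$ is normally generated by the meridians $\sigma^{m_L}$, $L \in \mathcal{A}\setminus\mathcal{A}_0$ (standard for complements of hypersurfaces in a smooth variety), and since $N_0$ stabilizes $\mathcal{A}\setminus\mathcal{A}_0$ this normal closure is $\hat{B}_0$-stable, hence coincides with $\QQ0$; quotienting yields~\eqref{ses}. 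The algebraic step is then formal, as you say: $\KK\widetilde{B}_0 = \bigoplus_g s(g)\,\KK B_0$, the defining ideal equals $\KK\widetilde{B}_0 I_0 = \bigoplus_g s(g) I_0$ once $I_0$ is conjugation-stable, and Theorem~\ref{theo:freeness} applied to $W_0$ gives the rank count $|\overline{N_0}|\cdot|W_0| = |N_0|$.

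Two points in your argument should be sharpened. First, the conjugation-stability of $I_0$ is precisely where the parameter convention for $\widetilde{H}_0$ enters, and the correct hypothesis is $N_0$-invariance: $u_{s,i} = u_{wsw^{-1},i}$ for all $w \in N_0$, which is how the algebra is defined in \cite{YH2} and restated in this paper (when the generic ring $\kk$ is introduced, and again before Lemma~\ref{lem:specialW0W0}). You instead invoke constancy on $W$-conjugacy classes; that implies $N_0$-invariance and so your argument goes through, but it is a strictly stronger assumption than the one under which the theorem is stated, and the theorem genuinely fails without $N_0$-invariance (conjugating a relation by $s(g)$ would then produce a relation with mismatched parameters on the same braided reflection, collapsing $H_0$ inside $\widetilde{H}_0$). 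Second, fullness of $W_0$ does more than give a ``clean partition'' of $\mathcal{A}$: it guarantees that for $L \in \mathcal{A}_0$ the pointwise stabilizers of $L$ in $W$ and in $W_0$ coincide, so that braided reflections of $B$ around $L$ map to braided reflections of $B_0$ with the same order $m_L$ and the same distinguished reflection; this is what identifies the ideal generated by the imposed relations with $\KK\widetilde{B}_0 I_0$, where $\KK B_0/I_0$ is the Hecke algebra $H_0$ of $W_0$. With these two points made explicit, your proof is complete.
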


An equivalent definition of $\widetilde{H}_0$ can be given as follows. We introduce the normal subgroup $\QQ0$ of $\hat{B}_0$ generated by all the
$\sigma^{m_L}$, for $\sigma$ a braided reflection around some hyperplane $L \in \mathcal{A}\setminus \mathcal{A}_0$. Let  
$\widetilde{B}_0 = \hat{B}_0/\QQ0$. We define $\widetilde{H}_0$ as the quotient of $\KK \widetilde{B}_0$ by
the Hecke relations of $W_0$, which makes sense as all the braided reflections of $B$ with respect to a hyperplane in $\mathcal{A}_0$ belong to $\hat{B}_0$. These elements of the
form $\sigma^{m_L}$ are exactly the \textit{meridians} around $L$, in the terminology of \cite{BESSISKPI1} (also called \textit{generators-of-the-monodromy} in \cite{BMR}). We set $X_0 = \C^n \setminus \bigcup \mathcal{A}_0$. Letting $B_0 = \pi_1(X_0/W_0)$ denote the braid group of $W_0$, we have
a short exact sequence of groups (see \cite[Section 2.2]{YH2})
\begin{eqnarray}\label{ses}
1 \to B_0 \to \widetilde{B}_0 \to \overline{N_0}
\to 1
\end{eqnarray}
and the direct sum decomposition of Theorem \ref{theo:freenessnorm} is such that $[g H_0] \subset \widetilde{H}_0$
is equal to $b H_0 = H_0 b$ for $b \in \widetilde{B}_0$ having $g \in \overline{N_0}$ for image. We have $b H_0 b^{-1} = H_0$ for $b \in \widetilde{B}_0$.

\begin{lemma} \label{lem:semidirect} Assume that there exists a group homomorphism $\psi : \overline{N_0} \to \widetilde{H}_0^{\times}$
such that, for every $g \in \overline{N_0}$, there exists $b \in \widetilde{B}_0$ with the following property
\begin{itemize}
\item $b$ maps to $g$ under $\widetilde{B}_0 \onto \overline{N_0}$
\item $\psi(g)$ belongs to the image $b H_0 \subset \widetilde{H}_0$ of $b (K B_0)$ under $K \widetilde{B}_0 \to \widetilde{H}_0$.
\end{itemize} 
Then $\widetilde{H}_0 \simeq \overline{N_0} \ltimes H_0$. In particular, if the short exact sequence (\ref{ses}) splits,
then $\widetilde{H}_0$ is a
semidirect product  $\overline{N_0}\ltimes H_0$.
\end{lemma}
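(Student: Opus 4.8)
The plan is to recognize the decomposition $\widetilde H_0 = \bigoplus_{g \in \overline{N_0}}[gH_0]$ of Theorem~\ref{theo:freenessnorm} as an $\overline{N_0}$-grading of the algebra $\widetilde H_0$, and to use $\psi$ to exhibit an invertible homogeneous element in each graded piece; the crossed product structure then follows formally. The first step is to record that the grading is multiplicative: if $b,c \in \widetilde B_0$ lift $g,h \in \overline{N_0}$, then using $[gH_0] = bH_0 = H_0 b$ and the analogous identity for $c$ we get $[gH_0]\cdot[hH_0] = bH_0\,cH_0 = b\,(cH_0)\,H_0 = bc\,H_0 = [gh\,H_0]$, since $bc$ lifts $gh$. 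In particular $[gH_0]\cdot[g^{-1}H_0] = [1\cdot H_0] = H_0$.

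Next I would show that $\psi(g)$ freely generates the component $[gH_0]$ as a right $H_0$-module. By hypothesis $\psi(g)\in[gH_0]$, and since $\psi$ is a homomorphism we also have $\psi(g)^{-1} = \psi(g^{-1})\in[g^{-1}H_0]$. For any $y\in[gH_0]$ the element $\psi(g)^{-1}y$ lies in $[g^{-1}H_0]\cdot[gH_0] = H_0$, whence $y = \psi(g)\bigl(\psi(g)^{-1}y\bigr)\in\psi(g)H_0$; as the reverse inclusion is clear this gives $\psi(g)H_0 = [gH_0]$. Because $\psi(g)$ is a unit, the map $h\mapsto\psi(g)h$ is injective, so it is an isomorphism $H_0 \cong [gH_0]$ of right $H_0$-modules. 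Summing over $g$ yields $\widetilde H_0 = \bigoplus_{g}\psi(g)H_0$, with $\psi(g)$ a free basis of each summand.

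The key structural point --- and the step I would treat most carefully --- is that each $\psi(g)$ normalizes $H_0$. Here the homomorphism hypothesis pays off a second time: for $h\in H_0$ the conjugate $\psi(g)h\psi(g)^{-1}$ lies in $[gH_0]\cdot H_0\cdot[g^{-1}H_0] = [g\cdot1\cdot g^{-1}\,H_0] = H_0$ purely by multiplicativity of the grading, and $\alpha_{g^{-1}}$ is a two-sided inverse to $\alpha_g := \bigl(h\mapsto\psi(g)h\psi(g)^{-1}\bigr)$ because $\psi(g)\psi(g^{-1}) = \psi(1) = 1$. Thus each $\alpha_g$ is an algebra automorphism of $H_0$, and $g\mapsto\alpha_g$ is a group homomorphism $\overline{N_0}\to\Aut(H_0)$. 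I expect the only real subtlety to be bookkeeping: one must track the grading degrees to see that conjugation stays inside $H_0$, and one should not attempt this with a mere set-theoretic lift of $g$, since it is exactly $\psi(g)^{-1} = \psi(g^{-1})$ lying in $[g^{-1}H_0]$ that drives both this step and the previous one.

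Finally I would assemble the isomorphism. Form the skew group algebra $\overline{N_0}\ltimes_\alpha H_0$, with $\KK$-basis $\{\bar g\,h\}$ (over a basis of $H_0$) and product determined by $\bar g\,h\,\bar g^{-1} = \alpha_g(h)$, and define $\Phi:\overline{N_0}\ltimes_\alpha H_0\to\widetilde H_0$ by $\Phi(\bar g\,h) = \psi(g)h$. It is a $\KK$-module isomorphism because it carries the basis $\{\bar g\,h\}$ bijectively to the basis of $\widetilde H_0 = \bigoplus_g\psi(g)H_0$ obtained in the second step. It is multiplicative because $\psi(g_1)h_1\psi(g_2) = \psi(g_1g_2)\,\alpha_{g_2^{-1}}(h_1)$, which follows at once from $\psi(g_2)^{-1}h_1\psi(g_2) = \alpha_{g_2^{-1}}(h_1)$ and $\psi(g_1)\psi(g_2) = \psi(g_1g_2)$. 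Hence $\widetilde H_0\simeq\overline{N_0}\ltimes H_0$. For the final assertion, a splitting $s:\overline{N_0}\to\widetilde B_0$ of~(\ref{ses}), composed with the canonical group homomorphism $\widetilde B_0\to\widetilde H_0^{\times}$ sending braid elements to units, produces a homomorphism $\psi$ with $\psi(g)$ the image of $b = s(g)$; then $\psi(g) = b\cdot1\in bH_0$, so the hypotheses are satisfied and the semidirect product decomposition follows.
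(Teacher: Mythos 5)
Your proof is correct, and its endgame is the same as the paper's: both produce the map $g \otimes x \mapsto \psi(g)x$ from the crossed product $\overline{N_0} \ltimes H_0$ to $\widetilde{H}_0$ and conclude it is an isomorphism because it sends each $g \otimes H_0$ bijectively onto the summand $[gH_0]$ of Theorem~\ref{theo:freenessnorm}. The difference lies in how the two prerequisites, $\psi(g)H_0 = [gH_0]$ and $\psi(g)H_0\psi(g)^{-1} = H_0$, are verified. The paper factors $\psi(g) = bm$ with $m \in H_0$, notes that $m$ is invertible, and computes $\psi(g)H_0\psi(g)^{-1} = bmH_0m^{-1}b^{-1} = bH_0b^{-1} = H_0$, leaning on the identity $bH_0b^{-1} = H_0$ for $b \in \widetilde{B}_0$ recorded just before the lemma. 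You instead first establish that the decomposition is a multiplicative $\overline{N_0}$-grading, $[gH_0]\cdot[hH_0] = [gh\,H_0]$, and then use the homomorphism hypothesis a second time, via $\psi(g)^{-1} = \psi(g^{-1}) \in [g^{-1}H_0]$, to place every relevant product in its correct graded piece. This buys a small but genuine gain in rigor: in the paper's computation, $m = b^{-1}\psi(g)$ is a priori invertible only in $\widetilde{H}_0$, and the steps $mH_0 = H_0$ and $mH_0m^{-1} = H_0$ tacitly require $m^{-1} \in H_0$ --- which is true, and is most easily proved by exactly your grading argument, but is left unsaid; your route through $\psi(g^{-1})$ bypasses the point entirely. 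You also spell out the final assertion (a splitting of (\ref{ses}) composed with $\widetilde{B}_0 \to \widetilde{H}_0^{\times}$ yields a valid $\psi$), which the paper treats as immediate.
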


\begin{proof}
Because of $\psi(g) \in \widetilde{H}_0^{\times}$ and the stated conditions, we have $\psi(g) H_0  = b H_0 = [g H_0]$.
Writing $\psi(g) = b m$ for $m \in H_0$, since $b$ and $\psi(g)$ are invertible we get that $m$ is also invertible, and
$\psi(g) H_0 \psi(g)^{-1} = b m H_0 m^{-1} b^{-1} = b  H_0  b^{-1} = H_0$.
It follows that there is an algebra morphism $\overline{N_0} \ltimes H_0 \to \widetilde{H}_0$
mapping $g \otimes x$ for $g \in \overline{N_0}$, $x \in H_0$,
to $\psi(g) x$. Since it maps each $g \otimes H_0$ to $[g H_0]$ bijectively,
this is an isomorphism $\overline{N_0} \ltimes H_0 \simeq \widetilde{H}_0$.
\end{proof}

The following has been proven
in \cite[Theorems 3.15 and 3.19]{NORMARTIN}:

\begin{theorem}
\label{theo:liftreal}
If $W$ is a finite real reflection group and $W_0$ is an arbitrary
reflection subgroup of $W$, then the short exact sequence~(\ref{ses}) splits and 
$\widetilde{H}_0\simeq \overline{N_0} \ltimes H_0$.
\end{theorem}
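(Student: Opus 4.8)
The plan is to deduce the algebra isomorphism from a splitting of the group extension~(\ref{ses}) by invoking Lemma~\ref{lem:semidirect}. Indeed, a group-theoretic section $s : \overline{N_0} \to \widetilde{B}_0$ of the projection $\widetilde{B}_0 \onto \overline{N_0}$ composes with the canonical map $\widetilde{B}_0 \to \widetilde{H}_0^{\times}$ to give a homomorphism $\psi : \overline{N_0} \to \widetilde{H}_0^{\times}$, and the hypotheses of the lemma then hold tautologically by taking $b = s(g)$: for this choice $\psi(g)$ is the image of $b$ itself, hence lies in $b H_0$. So the entire content is to split~(\ref{ses}), and this is where the reality of $W$ enters.

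First I would recall the real chamber geometry. Since $W \subset \GL_n(\R)$, the arrangement $\mathcal{A}_0$ of $W_0$ is real, and $W_0$ acts simply transitively on the chambers of $\mathcal{A}_0$ in $\R^n$. Fix the fundamental chamber $C_0$; it is an open convex cone, hence contractible, and it meets no hyperplane of $\mathcal{A}_0$, so $C_0 \subset X_0 = \C^n \setminus \bigcup \mathcal{A}_0$. Because $N_0$ normalizes $W_0$ it stabilizes $\mathcal{A}_0$ and permutes the $W_0$-chambers; combined with simple transitivity of $W_0$, this yields a semidirect decomposition $N_0 = W_0 \rtimes \mathrm{Stab}_{N_0}(C_0)$, and the projection restricts to an isomorphism $\mathrm{Stab}_{N_0}(C_0) \xrightarrow{\sim} \overline{N_0}$. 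Write $g \mapsto \tilde{g}$ for its inverse, so that each $\tilde{g}$ stabilizes $C_0$ and $\tilde{g}\tilde{h} = \widetilde{gh}$.

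The braid-level section is then built by lifting paths that stay inside $C_0$. Identifying $\widetilde{B}_0$ with the (orbifold) fundamental group of $X_0/N_0$ as in~\cite[Section 2.2]{YH2} --- so that~(\ref{ses}) is the sequence attached to the quotient of $X_0/W_0$ by $\overline{N_0}$ --- I would fix a base-point $* \in C_0$ and, for each $g$, choose a path $\gamma_g$ from $*$ to $\tilde{g}\cdot *$ lying entirely in $C_0$. Its image in $X_0/N_0$ is a loop, independent of the chosen path because $C_0$ is simply connected, giving a well-defined $s(g) \in \widetilde{B}_0$ lifting $g$. Multiplicativity is the key point: as $\tilde{g}$ preserves $C_0$, the translate $\tilde{g}\cdot \gamma_h$ again lies in $C_0$, so the concatenation $\gamma_g \cdot (\tilde{g}\cdot\gamma_h)$ is a path in $C_0$ from $*$ to $\widetilde{gh}\cdot *$ and is therefore homotopic in $C_0$ to $\gamma_{gh}$; since translating a path by the element $\tilde{g} \in N_0$ leaves its image in $X_0/N_0$ unchanged, projecting yields $s(gh) = s(g)\,s(h)$. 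Thus $s$ is a homomorphic section, $(\ref{ses})$ splits, and Lemma~\ref{lem:semidirect} gives $\widetilde{H}_0 \simeq \overline{N_0} \ltimes H_0$.

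The step I expect to be the main obstacle is the passage from $C_0 \subset X_0$ to honest elements of $\widetilde{B}_0$: the paths $\gamma_g$ necessarily cross the hyperplanes of $\mathcal{A} \setminus \mathcal{A}_0$ that meet $C_0$, so one must verify that the contributions of these crossings are exactly trivialized. This is precisely what the defining relations $\sigma^{m_L} = 1$ (with $m_L = 2$ in the real case) achieve, and it is what allows the construction to be insensitive to the non-free locus of the $N_0$-action on $X_0$; making this rigorous --- equivalently, justifying the identification of $\widetilde{B}_0$ with $\pi_1(X_0/N_0)$ and the exactness of~(\ref{ses}) --- is the delicate part. It is also exactly where the argument would break for a genuinely complex $W$: there is no real chamber $C_0$ to house the paths, the reflections can have order $>2$, and a single relation $\sigma^{m_L}=1$ no longer suffices to absorb the crossings, which is what forces the extra hypotheses appearing in the later theorems.
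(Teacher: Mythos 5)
Your reduction to splitting~(\ref{ses}) is exactly the paper's own mechanism (it is the ``in particular'' clause of Lemma~\ref{lem:semidirect}), and your complement $N_0 = W_0 \rtimes \mathrm{Stab}_{N_0}(C_0)$ is the right one (Howlett's chamber stabilizer). Note that the paper itself does not prove this theorem: it cites \cite{NORMARTIN}, Theorems 3.15 and 3.19, whose argument goes through positive lifts of this same complement in the Artin monoid (cf.\ Corollary 3.12 of \cite{NORMARTIN} as quoted in Section 3 of the paper); your path-in-the-chamber construction is the geometric counterpart of that. However, there is a genuine gap at the step where you produce $s(g) \in \widetilde{B}_0$. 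You define $s(g)$ as the class of the loop obtained as the image of $\gamma_g$ in $X_0/N_0$, relying on an identification of $\widetilde{B}_0$ with $\pi_1(X_0/N_0)$. That identification is not a delicate point to be justified --- it is false in general. There is only a surjection $\widetilde{B}_0 \onto \pi_1(X_0/N_0)$, and its kernel is nontrivial whenever $N_0$ contains a reflection about a hyperplane of $\mathcal{A}\setminus\mathcal{A}_0$, which is the typical situation. Concretely, take $W$ of type $B_2$ and $W_0 = \langle s_1 \rangle$: then $s_2s_1s_2 \in N_0$, its image generates $\overline{N_0} \simeq \mathbb{Z}/2$, and it fixes pointwise a hyperplane meeting $X_0$; hence in the quotient space $X_0/N_0$ the punctured transverse disk around the branch locus fills in, so the braided reflection around that hyperplane dies in $\pi_1(X_0/N_0)$ while it survives in $\widetilde{B}_0$ (only its \emph{square} is a defining relation). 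In this example $\pi_1(X_0/N_0)$ cannot even detect the $\overline{N_0}$-coordinate, so a loop in $X_0/N_0$ does not single out an element of $\widetilde{B}_0$; for the same reason your multiplicativity check, carried out on images in $X_0/N_0$, only verifies the relations in this too-small quotient.

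The repair is to route everything through $X_0/W_0$ with its $\overline{N_0}$-action, never through the bare orbit space $X_0/N_0$. Since $N_0$ acts freely on $X$, the group $\overline{N_0}$ acts freely on $X/W_0$, so $\hat{B}_0 = \pi_1\bigl((X/W_0)/\overline{N_0}\bigr)$ consists of pairs $(g,[\alpha])$ with $\alpha$ a path in $X/W_0$ from the base point to its $g$-translate, multiplied by $(g,[\alpha])(h,[\beta]) = (gh, [\alpha\cdot(g\cdot\beta)])$. The exactness of~(\ref{ses}) quoted from \cite{YH2} says precisely that $\QQ0 \cap \pi_1(X/W_0)$ equals the kernel of $\pi_1(X/W_0) \onto B_0 = \pi_1(X_0/W_0)$; together with the fact that paths in $X_0/W_0$ can be pushed off the removed complex hypersurfaces, this shows that elements of $\widetilde{B}_0$ are exactly pairs $\bigl(g, [\text{path in } X_0/W_0 \text{ from } \bar{*} \text{ to } g\bar{*}]\bigr)$. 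Your data already provides such a pair: $s(g) := \bigl(g, [\bar{\gamma}_g]\bigr)$, where $\bar{\gamma}_g$ is the image of $\gamma_g$ in $X_0/W_0$, and the convexity homotopy $\gamma_{gh} \simeq \gamma_g \cdot (\tilde{g}\cdot\gamma_h)$ inside $C_0$, projected to $X_0/W_0$ (not to $X_0/N_0$), gives multiplicativity. With this replacement your argument is correct and self-contained modulo \cite{YH2}; without it, the object you call $s(g)$ lives in the wrong group.
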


In~\cite[Proposition 5.1]{NORMARTIN}, it is moreover shown that the short exact sequence~(\ref{ses})
 also splits in the case where $W$ is the complex reflection group $G(r,1,n)$ and $W_0$ is a standard parabolic subgroup of type $G(d,1,k)$, $k\leq n$. However, this conclusion cannot be expected for arbitrary finite complex reflection groups. Indeed, the splitting of the short exact sequence~(\ref{ses}) implies the splitting of the short exact sequence~(\ref{eq:sesW}), 
but there are pairs $(W,W_0)$ where $W$ is a reflection group and $W_0$ a reflection subgroup of $W$ such that the short exact sequence above does \textit{not} split (see \cite[Section 6]{NORMARTIN}). Nevertheless, we will show in Subsection~\ref{sec:generic} below that,
generically in characteristic $0$, this is the only obstruction for a semidirect product decomposition of the Hecke algebra $\widetilde{H}_0$ of $N_0$.

For later use, we prove the following result:

\begin{lemma}\label{lem:conjug}
Let $W_1,W_2$ be two reflection subgroups of $W$ which are conjugate, let $G_i =N_W(W_i)$,
and $\hat{B}_i, \widetilde{B}_i, B_i,\widetilde{H}_i$ be the groups and algebras $\hat{B}_0, \widetilde{B}_0, B_0,
\widetilde{H}_0$ attached to $W_0 = W_i$ as above, $i=1,2$. 

Then there is a group isomorphism $\widetilde{B}_1 \to \widetilde{B}_2$ mapping $B_1$ to $B_2$ 
inducing an algebra isomorphism $\widetilde{H}_1 \to \widetilde{H}_2$ such that $H_1$ is mapped to $H_2$. 
\end{lemma}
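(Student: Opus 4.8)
The plan is to produce all four isomorphisms at once from the linear action on $\C^n$ of a single element $w \in W$ conjugating $W_1$ to $W_2$, checking at each stage that this action respects the topological and algebraic data defining $\widetilde{B}_i$ and $\widetilde{H}_i$. So first fix $w \in W$ with $W_2 = w W_1 w^{-1}$; then automatically $N_2 = w N_1 w^{-1}$ and $\mathcal{A}_2 = w\cdot\mathcal{A}_1$, while $w$ permutes $\mathcal{A}$. The cleanest route is algebraic: lift $w$ to some $\tilde{w} \in B$ via the surjection $\pi : B \onto W$. Since $\ker\pi = P \subset \hat{B}_1$ and $\pi(\tilde{w}\hat{B}_1\tilde{w}^{-1}) = w N_1 w^{-1} = N_2$, conjugation $c_{\tilde{w}}$ by $\tilde{w}$ carries $\hat{B}_1 = \pi^{-1}(N_1)$ isomorphically onto $\pi^{-1}(N_2) = \hat{B}_2$. (Equivalently, $w$ gives homeomorphisms $X \to X$ and $X_1 \to X_2$ descending to $X/N_1 \to X/N_2$ and $X_1/W_1 \to X_2/W_2$, and $\pi_1$-functoriality yields the same isomorphisms up to inner automorphism; the lifted-conjugation formulation avoids basepoint bookkeeping.)

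Next I would check that $c_{\tilde{w}}$ carries $Q_1$ onto $Q_2$ and hence descends to $\widetilde{B}_1 \to \widetilde{B}_2$. The input here is the standard equivariance of meridians: $c_{\tilde{w}}$ sends a braided reflection $\sigma$ around a hyperplane $L$ to a braided reflection around $wL$, and $m_L = m_{wL}$ since the pointwise stabilizers of $L$ and $wL$ are $w$-conjugate. As $L \in \mathcal{A}\setminus\mathcal{A}_1$ if and only if $wL \in \mathcal{A}\setminus\mathcal{A}_2$, the generators $\sigma^{m_L}$ of $Q_1$ are sent to generators of $Q_2$, whence $c_{\tilde{w}}(Q_1) = Q_2$ and we obtain $\widetilde{B}_1 \to \widetilde{B}_2$. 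To see that $B_1$ maps to $B_2$, note that $w$ induces an isomorphism $\overline{N_1} = N_1/W_1 \to N_2/W_2 = \overline{N_2}$ fitting into a commutative square with the surjections $\widetilde{B}_i \onto \overline{N_i}$ of the short exact sequence~(\ref{ses}); since $B_i = \Ker(\widetilde{B}_i \onto \overline{N_i})$, the isomorphism $\widetilde{B}_1 \to \widetilde{B}_2$ restricts to $B_1 \to B_2$.

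Finally, extend the group isomorphism $\widetilde{B}_1 \to \widetilde{B}_2$ $\KK$-linearly to $\KK\widetilde{B}_1 \to \KK\widetilde{B}_2$ and verify that it takes the Hecke relations of $W_1$ to those of $W_2$, so that it descends to an algebra isomorphism $\widetilde{H}_1 \to \widetilde{H}_2$ with $H_1 \mapsto H_2$. The crucial observation is that for a distinguished reflection $s \in W_1$, its image $wsw^{-1} \in W_2$ is conjugate to $s$ \emph{in $W$}, so the two reflections carry the same Hecke parameters $u_{i,s}$; since $c_{\tilde{w}}$ sends a braided reflection around $L \in \mathcal{A}_1$ (lying over $s$) to a braided reflection around $wL \in \mathcal{A}_2$ (lying over $wsw^{-1}$), the relation $\prod_i(\sigma - u_{i,s}) = 0$ is mapped to the corresponding defining relation of $H_2$. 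This yields the desired isomorphism $\widetilde{H}_1 \to \widetilde{H}_2$ restricting to $H_1 \to H_2$.

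The main obstacle — in fact essentially the only nontrivial point — is the equivariance of braided reflections: that $c_{\tilde{w}}$ (equivalently the homeomorphism $w$) sends a braided reflection around $L$ to one around $wL$ while preserving the integer $m_L$. This is the geometric fact I would take from \cite{BMR,BESSISKPI1}; everything else is the routine verification that the short exact sequences, the subgroups $Q_i$ and $B_i$, and the parameter assignment $u_{i,s}$ are all manifestly natural in the pair $(W,W_i)$ under the $W$-action.
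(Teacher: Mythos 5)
Your proposal is correct and follows essentially the same route as the paper's proof: conjugation by a lift $\tilde{w}\in\pi^{-1}(\{w\})$ carries $\hat{B}_1$ to $\hat{B}_2$, meridian equivariance gives $Q_1\mapsto Q_2$ and hence $\widetilde{B}_1\to\widetilde{B}_2$, the commutative square over $N_1\to N_2$ identifies $B_1$ with $B_2$, and matching of braided reflections carries the defining ideal of $\widetilde{H}_1$ to that of $\widetilde{H}_2$. Your explicit remark that $s$ and $wsw^{-1}$ carry the same parameters (so the Hecke relations genuinely correspond) is a point the paper leaves implicit, but it is the same argument.
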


\begin{proof}
Let $w \in W$ such that $W_2 = w W_1 w^{-1}$ and $b \in \pi^{-1}(\{ w \}) \in B$. Setting $G_i = N_W(W_i)$, we have $G_2 = w G_1 w^{-1}$,
hence $\hat{B}_{2} = \pi^{-1}(G_2) = \pi^{-1}(wG_2w^{-1}) = b \pi^{-1}(G_1) b^{-1} = b \hat{B}_{1} b^{-1}$.
Let $Q_i = \Ker(\hat{B}_{i} \onto B_{i})$. By definition, $Q_i$ is generated as a group by the set of all the meridians around the reflecting hyperplanes of $W$ which are not reflecting hyperplanes for $W_i$.
 Now, $x \mapsto b x b^{-1}$ realizes a bijection between the meridians,
mapping the generating ones for $Q_1$ to the generating ones for $Q_2$,
hence $b Q_1 b^{-1} = Q_2$. It follows that $x \mapsto b x b^{-1}$ restricts to an isomorphism $\hat{B}_{1} \to \hat{B}_{2}$ which maps $Q_1$ to $Q_2$, therefore induces an isomorphism $\widetilde{B}_{1} \to \widetilde{B}_2$. Since $Q_1,Q_2$ are subgroups of the pure braid group $P$, this isomorphism fits into a commutative diagram of the form
$$
\xymatrix{
\widetilde{B}_1 \ar[r]\ar[d]_{\pi} & \widetilde{B}_2 \ar[d]_{\pi} \\
G_1 \ar[r] & G_2  
}
$$
where $G_1 \to G_2$ is the map $x\mapsto wxw^{-1}$. Since $W_2 = w W_1 w^{-1}$ this implies that it maps $B_1 = \Ker(\widetilde{B}_1 \onto N_W(W_1)/W_1)$ to $B_2$. Finally, since $x \mapsto b x b^{-1}$ maps braided reflections around reflecting hyperplanes of $W_1$
to braided reflections around reflecting hyperplanes of $W_2$, the defining ideal of $\widetilde{H}_1$ inside $\KK \widetilde{B}_1$ is mapped to the defining ideal of $\widetilde{H}_2$ inside $\KK \widetilde{B}_2$, and this induces an isomorphism $\widetilde{H}_1 \to \widetilde{H}_2$, which maps the image of $\KK B_1$ inside $\widetilde{H}_2$ to the image of $\KK B_2$ inside $\widetilde{H}_2$, namely $H_1$ to $H_2$, and this proves the claim.

\end{proof}

\subsection{The generic Hecke algebra of the normalizer}\label{sec:generic}

Let $W < \GL_n(\C)$ be a complex reflection group, $\mathcal{R}$ the set of its (pseudo-)reflections, and let $\mathcal{R}^* \subset \mathcal{R}$ be the collection of the distinguished ones. Let $W_0 < W$ be a full
reflection subgroup of $W$, and $\mathcal{R}_0 \subset \mathcal{R}$, $\mathcal{R}_0^* \subset \mathcal{R}^*$ its
collection of (distinguished) reflections.

\subsubsection{An isomorphism \`a la Cherednik}

There is a natural bijection $\mathcal{R}^* \to \mathcal{A}$ given by $s \mapsto \Ker(s-1)$. We denote $L \mapsto s_L$ its inverse. For any choice of elements $\varphi_L \in \C W_L$ with $W_L =\langle s_L \rangle$, $L \in \mathcal{A}$, with the condition that $\varphi_{w(L)} = w\varphi_Lw^{-1}$ for every $w \in W$, it is well-known (see for instance~\cite{BMR}) that the $1$-form
$\sum_{L \in \mathcal{A}} h \varphi_L \om_L \in \Omega^1(X) \otimes \C[[h]] W$, with $\om_L$ the logarithmic 1-form over $X$ associated to $L$ (that is, $\dd \alpha_L/\alpha_L$ for $\alpha_L$ any linear form defining $L$), is integrable and $W$-equivariant and provides an algebra isomorphism $H \to K W$, as a
consequence of Theorem~\ref{theo:freeness}, where $\KK = \C((h))$ is the field of Laurent series, and the $u_{s_L,k} \in K^{\times}$ depend on $\varphi_L$.

More precisely, we have $\C \langle s_L \rangle = \prod_{k=0}^{m_H-1} \Ker(s_L - \zeta_L^{k}) \simeq \C^{m_L}$,
where we identified $s_L$ with its image under the multiplication operator map $\C \langle s_L \rangle \into \End(\C \langle s_L \rangle)$, and $\zeta_L=\exp(2 \ii \pi/m_L)$. We denote $\eps_{L,k}$ the primitive
idempotent associated with $\Ker(s_L - \zeta_L^{k})$. Then letting $\varphi_L = \sum_{k=0}^{m_H-1}  \la_{L,k} \eps_{L,k} \in \C \langle s_H \rangle$ with scalars $\la_{L,k}$ chosen so
that $\varphi_{w(L)} = \varphi_L$ for all $L \in \mathcal{A},w \in W$, we get a morphism $ H \to K W$ with $u_{s_L,k} = \exp(2 \ii  \pi \la_{L,k} h/m_L)$,
which we call the parameters of $L$ associated to the collection of $\varphi_L, L \in \mathcal{A}$.
This morphism is an isomorphism as soon as the $u_{s_L,k}$ ($L \in \mathcal{A}, 0 \leq k \leq m_H-1$) are algebraically independent over $\C$,
which holds as soon as the
$\la_{L,k}$ are linearly independent over $\Q$. We call such a choice of parameters a \emph{generic choice} for $\varphi_L$.

We denote $\widetilde{H}_0'$ and $H'_0$ the Hecke algebras of $N_0$ and $W_0$
defined over $\C[[h]] \subset \C((h)) = K$. We prove the following.
\begin{proposition}\label{prop:cherednik}
For any choice of elements $\varphi_L \in \C W_L$, $L \in \mathcal{A}_0$, such that $\varphi_{g(L)} = \varphi_L$ for all $g \in N_0$, the 1-form 
$$
\om_0 = \sum_{L \in \mathcal{A}_0} \varphi_L \om_L \in \Omega^1(X) \otimes \C W_0
$$
is integrable over $X$, and $N_0$-equivariant. The monodromy of $h \om_0$ over $X/N_0$ provides an algebra homomorphism $\C[[h]] \widetilde{B}_0 \to \C[[h]] N_0$ mapping $\C[[h]] B_0$ to $\C[[h]] W_0$. The latter morphism
factorizes through $ \widetilde{H}_0'$ and induces a $\C((h))$-algebra morphism $\widetilde{H}_0 \to \C((h)) N_0$ mapping $H_0$ to $\KK W_0$ for the parameters of $H_0$ associated to the chosen values of $\varphi_L, L \in \mathcal{A}_0$. If this choice is generic in the above sense, then
 these algebra morphisms
  $ \widetilde{H}_0 \to K N_0$ and $ H_0 \to \KK W_0$
  are isomorphisms.
\end{proposition}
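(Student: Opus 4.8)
The plan is to construct the isomorphism via a Cherednik-style Dunkl/KZ connection, following the strategy already used to realize $H\cong KW$ in the paragraph preceding the proposition, but now carried out $N_0$-equivariantly over the arrangement complement $X$ rather than $X_0$. Let me think through the steps carefully.

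First, I would establish integrability of $\om_0$. The key point is that each $\varphi_L\in\C W_L$ commutes appropriately, and the relevant integrability conditions are the infinitesimal braid (or holonomy Lie algebra) relations for the hyperplane arrangement $\mathcal A_0$. Since these hold for the full form $\sum_{L\in\mathcal A}h\varphi_L\om_L$ attached to $W$ by the cited result of \cite{BMR}, and since $\mathcal A_0\subset\mathcal A$ is itself the arrangement of a reflection group $W_0$, the same relations restrict to give integrability of $\om_0$ over $X$. I would then check $N_0$-equivariance: the condition $\varphi_{g(L)}=\varphi_L$ for $g\in N_0$ (note this is stronger than the $W$-conjugation used for the ambient form, reflecting that $\om_0$ must descend to $X/N_0$ and not merely $X/W_0$) ensures that $g^*\om_0=g\,\om_0\,g^{-1}$ inside $\Omega^1(X)\ot\C W_0$, using that $N_0$ normalizes $W_0$ so conjugation preserves $\C W_0$.

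Second, I would produce the monodromy representation. Integrability gives a flat connection $\dd-h\om_0$ on the trivial $\C[[h]]N_0$-bundle over $X$, and $N_0$-equivariance lets it descend to $X/N_0$, yielding a monodromy homomorphism $\C[[h]]\pi_1(X/N_0)=\C[[h]]\hat B_0\to\C[[h]]N_0$. By construction the monodromy around a meridian of $L\in\mathcal A\setminus\mathcal A_0$ is trivial up to the local factor, so the images of the relations $\sigma^{m_L}=1$ defining $\QQ0$ are satisfied; hence the map factors through $\C[[h]]\widetilde B_0$. The local analysis around a hyperplane $L\in\mathcal A_0$ (diagonalizing $\varphi_L$ via the idempotents $\eps_{L,k}$, exactly as in the ambient computation) shows that a braided reflection $\sigma$ around $L$ maps to an element whose eigenvalues are the $u_{s_L,k}=\exp(2\ii\pi\la_{L,k}h/m_L)$, so it satisfies the Hecke relation of $H_0$; thus the map factors through $\widetilde H_0'$ and, after inverting $h$, gives $\widetilde H_0\to K N_0$ sending $H_0$ into $KW_0$.

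Third, I would upgrade this to an isomorphism under the genericity hypothesis. The strategy is a dimension count combined with the restricted map on $H_0$: the existing argument of \cite{BMR} (invoked before the proposition) shows that for linearly independent $\la_{L,k}$ the induced map $H_0\to KW_0$ is already an isomorphism, because the $u_{s_L,k}$ are then algebraically independent. Granting this, both $\widetilde H_0$ and $KN_0$ are free of rank $|N_0|$ over $K$ --- the former by Theorem~\ref{theo:freenessnorm} --- so it suffices to show the map is surjective. Here I would use the direct sum decomposition $\widetilde H_0=\bigoplus_{g\in\overline{N_0}}[gH_0]$ together with the commuting square relating the two short exact sequences \eqref{ses} and \eqref{eq:sesW}: the map $\widetilde H_0\to KN_0$ respects the filtration by cosets of $W_0$, and on the associated graded it induces, coset by coset, the isomorphism $H_0\xrightarrow{\sim}KW_0$ translated by a chosen lift of each $g$. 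Since each graded piece is an isomorphism and both modules are free of the same finite rank, the map itself is an isomorphism.

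The main obstacle I anticipate is the local monodromy computation around the hyperplanes $L\in\mathcal A\setminus\mathcal A_0$: one must verify that the connection $\dd-h\om_0$, which \emph{omits} the terms $h\varphi_L\om_L$ for such $L$, nonetheless has trivial-enough local monodromy that the relations $\sigma^{m_L}=1$ hold in the image, i.e. that a meridian $\sigma$ around such $L$ maps into $\C[[h]]N_0$ with $\sigma^{m_L}\mapsto 1$. Because $\om_0$ has no pole along these hyperplanes, the local monodromy is unipotent a priori rather than finite order, so the delicate point is to show the full $m_L$-th power $\sigma^{m_L}$ --- which is the element genuinely lying in $P$ and in $\widetilde B_0$ --- maps to the identity; this is exactly where the absence of a logarithmic singularity of $\om_0$ along $L$ must be exploited, and where care is needed to match the geometric meridian with the corresponding pure-braid loop.
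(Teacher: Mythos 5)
Your proposal follows the paper's own strategy: define the $\C W_0$-valued form $\om_0$, take the monodromy of $h\om_0$ over $X/N_0$, check that it kills the meridians generating $\QQ0$ and satisfies the Hecke relations of $W_0$, and conclude by comparing dimensions using Theorem~\ref{theo:freenessnorm}. The one genuinely different ingredient is your surjectivity argument: you work coset by coset through the decomposition $\widetilde{H}_0=\bigoplus_{g\in\overline{N_0}}[gH_0]$, feeding in the generic isomorphism $H_0\simeq \KK W_0$ of \cite{BMR} (this is correct, granted the observation -- which you assert but should justify -- that parallel transport takes values in $\C[[h]]W_0$, so that $[gH_0]$ lands in the $\KK$-span of the coset $gW_0$). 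The paper instead gets surjectivity of $\C[[h]]\hat{B}_0\to\C[[h]]N_0$ from Nakayama's lemma, since its reduction modulo $h$ is the surjection $\C\hat{B}_0\to\C N_0$ induced by $\pi$; this is shorter and uses no genericity at all, so in the paper the hypothesis on the $\la_{L,k}$ plays no role in surjectivity or in the dimension count, whereas your argument consumes it as an input.

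Two points need correcting. First, the step you flag as ``the main obstacle'' is immediate, and your description of it is confused: because $\om_0$ has no pole along a hyperplane $L\in\mathcal{A}\setminus\mathcal{A}_0$, the connection $\dd-h\om_0$ is defined and flat on all of $\C^n\setminus\bigcup\mathcal{A}_0\supset X$; the meridian $\sigma^{m_L}\in P$ bounds a disk in that larger space (a small disk meeting $L$ transversally at a point lying on no hyperplane of $\mathcal{A}_0$), so its monodromy is exactly $1$. There is no ``unipotent a priori'' local monodromy to dispose of -- unipotence is the phenomenon attached to a logarithmic pole with nilpotent residue, while a nonsingular point of the connection yields trivial monodromy outright (the fact that every pure braid maps to $1+O(h)$ is true but irrelevant). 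Second, your justification of integrability is a non sequitur as written: integrability of the full form $\sum_{L\in\mathcal{A}}\varphi_L\om_L$ attached to $W$ does not ``restrict'' to the sub-sum over $\mathcal{A}_0$, because the flatness conditions couple hyperplanes through codimension-two intersections. The correct argument (and the paper's) is that $\om_0$ is the restriction to the open subset $X$ of the standard integrable form attached to $W_0$ on $\C^n\setminus\bigcup\mathcal{A}_0$ -- integrable by \cite{BMR} applied to $W_0$, using the $W_0$-equivariance of the $\varphi_L$, which follows from your $N_0$-invariance hypothesis -- and integrability survives restriction to an open subset. With these two repairs your outline matches the paper's proof.
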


\begin{proof}
The 1-form $\om_0$ is the restriction to $X$ of the usual 1-form on $\C^n \setminus \bigcup \mathcal{A}_0$ attached to $W_0$, therefore it is integrable as well. The $N_0$-equivariance is clear. Thus, by e.g. Chen's iterated integrals as in \cite{CHEN}, one gets that the monodromy of $h \om_0$ 
provides a morphism $\C[[h]] \widehat{B}_0 = \C[[h]]\pi_1(X/N_0)
\to \C[[h]] N_0$ which restricts to the usual monodromy
morphism $\C[[h]] B_0 \to \C[[h]] W_0 \subset \C[[h]] N_0$. Since the map $\C \widehat{B}_0 \to \C N_0$ induced by $\pi : \widehat{B}_0 \to N_0$ is
surjective and coincides with reduction modulo $h$ of
$\C[[h]] \widehat{B}_0 \to \C[[h]] N_0$, by Nakayama's lemma one gets that the latter algebra homomorphism is surjective. Since the monodromy of $\om_0$ along meridians around $L \not\in \mathcal{A}_0$ is trivial, this morphism induces
an algebra morphism $\Phi : \C[[h]] \widetilde{B}_0 \to \C[[h]] N_0$ which is still surjective, and still extends $\C[[h]] B_0 \to \C[[h]] W_0$.

The latter is
known to factorize through $H_0'$, hence we
get that $\Phi$ induces an algebra morphism $\C[[h]] \widetilde{B}_0 \to \C[[h]] N_0$ which is still surjective,
and still extends $\C[[h]] B_0$ to $\C[[h]] W_0$. 

Since the latter
is known to factorize through $\widetilde{H}_0'$, we get that $\Phi$ induces a surjective algebra morphism
$\widetilde{H}_0 \to K N_0$. As a consequence
of Theorem~\ref{theo:freeness}, we have
equality of dimensions, therefore this provides an
isomorphism $ \widetilde{H}_0 \to K N_0$ mapping
$ H_0$ to $K W_0$.

\end{proof}

\subsubsection{Consequences in the generic case}

Let $\kk$ be
the ring of Laurent polynomials $\Z[u_{s,i}^{\pm 1}]$, where $s \in \mathcal{R}^*$ and $i \in \{ 0,\dots,o(s)-1\}$, with the
convention that $u_{s,i} = u_{wsw^{-1},i}$ for all $w \in N_0$. This is the most general ring
over which $\widetilde{H}_0$ is defined. 
In this section we consider the generic
case, that is, the case where $\KK$ is a field containing $\kk$. In particular $\KK$ has characteristic $0$.

We now assume that $W_0$ admits a complement inside $N_0$,
that is, we assume that $N_0 = U_0 \ltimes W_0$ for some $U_0 < N_0$.
We also also assume that the parameters of $H_0$ are generic in characteristic $0$,
that is, that $\KK$ is a field containing the generic ring $\kk = \Z[u_{s,i}^{\pm}]$.

For a generic choice of the parameters $\varphi_L$ of the previous section, we know by
Proposition~\ref{prop:cherednik} we know that there exists an algebra isomorphism $\Phi :  \widetilde{H}_0 \to \KK N_0$ mapping $ H_0$ to $\KK W_0$, for $\KK = \C((h))$
containing $\kk$ as a subfield. This implies that the same statement holds for every field extension $\KK \supset \kk$ which is large enough, for instance for an algebraic closure of $\kk$.

For such a field $\KK$ containing $\kk$, setting $M: = \Phi^{-1}(U_0)$, we get as a consequence that 
$ \widetilde{H}_0 = \bigoplus_{g \in M} g H_0$ and
$\widetilde{H}_0 = M \ltimes  H_0 \simeq \overline{N_0} \ltimes  H_0$. When $W_0$ is a parabolic subgroup of $W$, the existence of such a complement $U_0$ is proven in all cases in \cite{TAYLORNORM}, therefore a consequence of the above argument is the following:

\begin{theorem}\label{theo:generic}
Let $W_0$ be a parabolic subgroup of $W$. Then,
for $\KK$ a sufficiently large field containing $\kk = \Z[u_{s,i}^{\pm}]$, 
we have $\widetilde{H}_0 \simeq \overline{N_0} \ltimes H_0$.

\end{theorem}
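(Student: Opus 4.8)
The plan is to deduce Theorem~\ref{theo:generic} directly from Proposition~\ref{prop:cherednik} together with the splitting result of Muraleedaran--Taylor, by transporting a complement in the group across the Cherednik-type isomorphism to obtain a complement at the level of algebras. First I would invoke \cite{TAYLORNORM} to fix, for the parabolic subgroup $W_0$, a subgroup $U_0 < N_0$ with $N_0 = U_0 \ltimes W_0$; this is exactly the splitting of the short exact sequence~(\ref{eq:sesW}) and is the hypothesis that the generic argument feeds on. The image of $U_0$ under the natural projection $N_0 \onto \overline{N_0}$ is an isomorphism $U_0 \xrightarrow{\sim} \overline{N_0}$, so a complement of $W_0$ in $N_0$ is the same data as a section of $\overline{N_0}$ sitting inside $N_0$.

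Next I would pin down the field-theoretic bookkeeping. Proposition~\ref{prop:cherednik} produces, for a \emph{generic} choice of the parameters $\varphi_L$, an algebra isomorphism $\Phi : \widetilde{H}_0 \to \KK N_0$ mapping $H_0$ onto $\KK W_0$, but only over the specific field $\KK = \C((h))$, with the Hecke parameters taking the transcendental values $u_{s_L,k} = \exp(2\ii\pi\la_{L,k}h/m_L)$. The point of genericity is that these values, for linearly independent $\la_{L,k}$, are algebraically independent over $\C$, hence realize an embedding of the generic ring $\kk = \Z[u_{s,i}^{\pm}]$ into $\C((h))$. I would then argue that existence of such an isomorphism over one field containing $\kk$ forces the analogous isomorphism over any sufficiently large field containing $\kk$: the statement $\widetilde{H}_0 \simeq \overline{N_0}\ltimes H_0$ is encoded by the existence of a group homomorphism $\psi : \overline{N_0} \to \widetilde{H}_0^{\times}$ satisfying the hypotheses of Lemma~\ref{lem:semidirect}, and the existence of such a $\psi$ with entries lying in the appropriate cosets $bH_0$ is a system of polynomial conditions with coefficients in $\kk$; solvability over one extension of $\kk$ yields solvability over the algebraic closure, and thus over any large enough $\KK$.

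With these two ingredients in hand, the deduction is short. Over $\KK = \C((h))$ I set $M := \Phi^{-1}(U_0) \subset \widetilde{H}_0^{\times}$; because $\Phi$ is an algebra isomorphism carrying $H_0$ to $\KK W_0$ and $U_0$ is a subgroup normalizing $W_0$ with $N_0 = U_0 \ltimes W_0$, the preimage $M$ is a subgroup of units normalizing $H_0$, and the group-level direct sum $\KK N_0 = \bigoplus_{g\in U_0} g\,\KK W_0$ pulls back to $\widetilde{H}_0 = \bigoplus_{g\in M} g H_0$. Moreover each $g \in M$ maps into the correct coset $[\bar g H_0]$ of the decomposition in Theorem~\ref{theo:freenessnorm}, so the inclusion $M \into \widetilde{H}_0^{\times}$ supplies precisely the homomorphism $\psi$ required by Lemma~\ref{lem:semidirect}, giving $\widetilde{H}_0 = M \ltimes H_0 \simeq \overline{N_0}\ltimes H_0$ over $\C((h))$. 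Finally I transfer this to an arbitrary sufficiently large field $\KK \supset \kk$ via the specialization argument of the previous paragraph.

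The main obstacle I expect is not the algebra but the control of the parameters: I must make sure that the \emph{generic} values produced by the Cherednik construction genuinely correspond to the generic ring $\kk$ in the sense required, i.e. that algebraic independence of the $u_{s_L,k}$ over $\C$ matches the freeness of $\kk = \Z[u_{s,i}^{\pm}]$ exactly, respecting the conjugacy identifications $u_{s,i} = u_{wsw^{-1},i}$ for $w \in N_0$. One must check that a generic choice of the $\varphi_L$ (equivariant under $N_0$, not merely under $W_0$) is possible and still yields algebraically independent parameters, so that the isomorphism of Proposition~\ref{prop:cherednik} is available precisely over a field containing $\kk$. Once this compatibility is secured, the passage from the single transcendental field $\C((h))$ to an arbitrary large field containing $\kk$ is the routine step, since the relevant data (an algebra isomorphism, equivalently the map $\psi$) is cut out by equations defined over $\kk$.
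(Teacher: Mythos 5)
Your proposal is correct and takes essentially the same route as the paper: fix a Taylor--Muraleedaran complement $U_0 < N_0$, use the Cherednik-type isomorphism $\Phi : \widetilde{H}_0 \to \KK N_0$ of Proposition~\ref{prop:cherednik} for a generic choice of parameters over $\C((h)) \supset \kk$ to obtain the group of units $M = \Phi^{-1}(U_0)$ giving $\widetilde{H}_0 = \bigoplus_{g \in M} gH_0 = M \ltimes H_0 \simeq \overline{N_0} \ltimes H_0$, and pass to any sufficiently large field containing $\kk$ by a genericity/specialization argument. The only differences are cosmetic: the paper transfers the existence of $\Phi$ to large fields first and then pulls back $U_0$, whereas you pull back over $\C((h))$ first and then transfer the conclusion (encoded via Lemma~\ref{lem:semidirect} as polynomial conditions over $\kk$), which is an equivalent bookkeeping of the same argument.
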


\subsection{Genericity conditions}

Thanks to Theorem~\ref{theo:generic}, the question is therefore to determine algebraic criteria on the domain $\KK$ and on the parameters $u_{s,i} \in \KK$ so that the extension $\widetilde{H}_0$ is a semidirect product.

To this end, we will often need the following result of commutative algebra.

\begin{lemma} \label{lem:polfield} Let $\KK$ be an arbitrary domain,
let $a,b \in \Z\setminus\{ 0 \}$, and let $\chi(X) \in \KK[X]$ be a monic and split polynomial with
roots $v_1,\dots,v_r \in \KK^{\times}$ such that $i \neq j \Rightarrow v_i - v_j \in \KK^{\times}$.

We call such a split polynomial \emph{square-free}.
Then the following are equivalent: 
\begin{itemize}
\item There exists $T \in \KK[X]$ such that $T(X)^a \equiv X^b
\mod \chi(X)$ inside $\KK[X,X^{-1}]$,
\item The domain $\KK$ contains some $a$-th root of each 
$v_i^b, 1 \leq i \leq r$.
\end{itemize}

\end{lemma}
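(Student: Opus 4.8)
The plan is to prove the equivalence by working in the quotient ring $R = \KK[X]/(\chi(X))$. Since $\chi$ is monic, split, and square-free (all roots $v_i$ lie in $\KK^\times$ with pairwise differences in $\KK^\times$), the Chinese Remainder Theorem should give a ring isomorphism
\[
R = \KK[X]/(\chi(X)) \;\xrightarrow{\;\sim\;}\; \prod_{i=1}^r \KK[X]/(X - v_i) \;\simeq\; \KK^r,
\]
where the $i$-th coordinate of the class of a polynomial $P(X)$ is its evaluation $P(v_i)$. The point is that, because the $v_i - v_j$ are invertible, the ideals $(X - v_i)$ are pairwise comaximal even over a domain that is not a field, so CRT applies. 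First I would verify this isomorphism carefully, as it is the structural backbone: the congruence $T(X)^a \equiv X^b \bmod \chi(X)$ holds in $\KK[X,X^{-1}]$ if and only if it holds in $R$ (note $X$ is already invertible modulo $\chi$ since each $v_i \in \KK^\times$, so the class of $X$ is a unit in $R$), and under the isomorphism this is equivalent to the system $T(v_i)^a = v_i^b$ for all $i$.

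Next I would use this to establish both directions. For the forward implication, suppose such a $T$ exists; then evaluating at each root gives $T(v_i)^a = v_i^b$, so $T(v_i) \in \KK$ is an $a$-th root of $v_i^b$, and since $v_i^b \in \KK^\times$ this root lies in $\KK^\times \subset \KK$. This immediately yields the second condition. For the converse, suppose $\KK$ contains an $a$-th root $w_i$ of each $v_i^b$, i.e. $w_i^a = v_i^b$. Under the CRT isomorphism I would define the target element $(w_1, \dots, w_r) \in \KK^r$ and pull it back to an element $\bar{T} \in R$; lifting $\bar T$ to any polynomial $T \in \KK[X]$ gives $T(v_i) = w_i$ for each $i$, whence $T(v_i)^a = w_i^a = v_i^b = (X^b)(v_i)$ for all $i$. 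By the isomorphism this says exactly $T(X)^a \equiv X^b \bmod \chi(X)$ in $R$, and hence in $\KK[X,X^{-1}]$.

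The main technical point to watch is that $\KK$ is merely a \emph{domain}, not a field, so one cannot freely invert arbitrary elements or use that $\KK[X]$ is a PID. The square-free hypothesis is exactly what is needed to make CRT go through over a domain: I would check that for $i \neq j$ the comaximality $(X - v_i) + (X - v_j) = \KK[X]$ follows from $(X-v_j)-(X-v_i) = v_i - v_j$ being a unit. One should also confirm that surjectivity of $R \to \KK^r$ does not require $\KK$ to be a field — it follows directly from pairwise comaximality in CRT — and that the pulled-back element can be represented by an honest polynomial of degree $< r$. I expect the only real subtlety to be bookkeeping the invertibility hypotheses (that $v_i \in \KK^\times$ guarantees $X$ is a unit mod $\chi$ and that $v_i^b \in \KK^\times$), so that the $a$-th roots produced indeed lie in $\KK^\times$ as required; everything else is a routine consequence of the CRT decomposition.
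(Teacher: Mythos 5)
Your proof is correct and follows essentially the same route as the paper: both reduce, via the Chinese Remainder Theorem applied to the pairwise comaximal ideals $(X-v_i)$, to the system of evaluation equations $T(v_i)^a = v_i^b$. The only cosmetic difference is that the paper produces the interpolating polynomial by inverting a Vandermonde matrix, whereas you invoke surjectivity of the CRT map directly; both rest on the same invertibility of the differences $v_i - v_j$.
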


\begin{proof}
By assumption, we can write
$\chi(X) = (X-v_1)\dots (X-v_r)$. Because of the conditions $v_i \in \KK^{\times}$ and $v_i - v_j \in \KK^{\times}$,
applying the Chinese
Remainder Theorem we have
$$
\KK[X,X^{-1}]/(\chi) \simeq \KK[X]/(\chi) \simeq \prod_i \KK[X]/(X-v_i) \simeq \KK^r.
$$

Then the desired property of $T$ is equivalent to the equations $T(v_i)^a = v_i^b$. There is no
solution to such an equation if $v_i^{b}$ has no $a$-th root in $\KK$. If it has such an $a$-th root $v_i^{b/a}$,
 we are looking for a polynomial $T \in \KK[X]$
such that $T(v_i) = v_i^{b/a}$ for $1 \leq i \leq r$.
This is then a linear equation in
the coefficients of $T$, whose determinant is the Vandermonde determinant attached to the $v_i$'s.
This determinant is invertible in $\KK$ because of our assumptions, and this proves the claim.

\end{proof}

One specific element whose minimal polynomial will play a major role
is the following one.
\begin{lemma}\label{lem:typea}
The image of Garside's fundamental element in the braid group on $n+1$ strands
inside
the Hecke algebra of type $A_{n}$ is annihilated by the polynomial
\begin{equation}
\label{eq:polmintypeA}
\prod_{\stackrel{i,j \geq 0}{i+j = n(n+1)}} (X^2-(-1)^i u_0^i u_1^j)
\end{equation}
with $u_0 = u_{s,0}$ and $u_1 = u_{s,1}$ for any $s\in \mathcal{R}$.
\end{lemma}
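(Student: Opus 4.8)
The plan is to identify the relevant element explicitly and compute the characteristic polynomial of a well-chosen operator acting on the Hecke algebra. Let $\Delta$ denote the image of Garside's fundamental element $\Delta$ of the braid group $B_{n+1}$ inside the Iwahori–Hecke algebra $H$ of type $A_n$. Recall that $\Delta$ is the positive lift of the longest element $w_0 \in \mathfrak{S}_{n+1}$, so that $\Delta^2 = z_B$ is the canonical positive central element of $B_{n+1}$, mapping into the center $Z(H)$. The key structural fact I would invoke is that, after extending scalars to a large enough field where $H$ is split semisimple (equivalently, after the Cherednik-type isomorphism of Proposition~\ref{prop:cherednik} identifying $H$ with the group algebra $\kk \mathfrak{S}_{n+1}$ for generic parameters), $\Delta$ acts on each irreducible representation $V_\lambda$ of $H$ indexed by a partition $\lambda \vdash n+1$ as a scalar, since $\Delta^2$ is central and $\Delta$ normalizes. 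The scalar by which $z_B = \Delta^2$ acts on $V_\lambda$ is classically known in terms of the content statistic of $\lambda$, and more precisely equals $(u_0 u_1)^{N}$ up to sign, where $N = \binom{n+1}{2}$ is the number of positive roots; this is the point where the exponent $n(n+1)$ (that is, $2N$) enters.

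First I would make precise the eigenvalue computation. Since $\Delta$ is the image of $w_0$, conjugation by $\Delta$ induces on $\mathfrak{S}_{n+1}$ the automorphism $s_i \mapsto s_{n+1-i}$; combined with the scalar action of $\Delta^2$ on each $V_\lambda$, this shows that $\Delta$ itself acts as a scalar $d_\lambda$ on each $V_\lambda$, at least on those $\lambda$ fixed by the corresponding diagram automorphism, and pairs up the others. The square of this scalar is the eigenvalue of $\Delta^2 = z_B$, which I would compute to be of the form $(-1)^i u_0^i u_1^j$ with $i + j = n(n+1)$, matching the factors in~(\ref{eq:polmintypeA}). The cleanest route is to first verify the statement at the classical specialization $u_0 = 1$, $u_1 = -1$ (where $H = \kk\mathfrak{S}_{n+1}$ and $\Delta$ maps to $w_0$), where the eigenvalues of $w_0$ are signs $\pm 1$, and then track the parameter dependence through the defining quadratic relation $(\sigma - u_0)(\sigma - u_1) = 0$: each braided reflection contributes a factor homogeneous in $u_0, u_1$ of total degree one, and since $\Delta$ is a product of $N = \binom{n+1}{2}$ braided reflections in its reduced expression, $\Delta^2$ is homogeneous of total degree $2N = n(n+1)$ in $u_0, u_1$. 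This degree bookkeeping forces the exponents to satisfy $i + j = n(n+1)$.

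The main technical step is to show that the displayed product~(\ref{eq:polmintypeA}) actually annihilates $\Delta$, i.e. that every eigenvalue of $\Delta$ is a root of one of the factors $X^2 - (-1)^i u_0^i u_1^j$. Since $H$ is a free $\kk$-module (Theorem~\ref{theo:freeness}) and the element $\Delta$ satisfies a polynomial over $\kk$ with coefficients determined by its characteristic polynomial as a $\kk$-linear operator (acting by, say, left multiplication), it suffices to check the annihilation after base change to an algebraically closed field where $H$ is semisimple, where an element is annihilated by a polynomial iff that polynomial kills all its eigenvalues on the simple modules. Thus I would: (i) reduce to the split semisimple case; (ii) enumerate the eigenvalues of $\Delta$ via the eigenvalues of central $\Delta^2$ on each $V_\lambda$; (iii) match these with the factors via the content formula and the homogeneity-degree argument above. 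The genuine obstacle is pinning down the precise sign $(-1)^i$ and the exact pairing $(i,j)$ attached to each $\lambda$: one must verify that as $\lambda$ ranges over all partitions of $n+1$, the pairs $(i,j)$ with $i+j = n(n+1)$ that actually occur are exactly those for which the corresponding factor is needed, and that no spurious factors are required. I expect this to follow from the known formula for the action of the full twist on Specht modules of the Hecke algebra, but reconciling the sign conventions in~(\ref{eq:polmintypeA}) with the standard content formula is where the real care is needed; a safe fallback is to take~(\ref{eq:polmintypeA}) as the product over \emph{all} admissible pairs $(i,j)$ (whence ``annihilated by'' rather than ``minimal polynomial equals''), which sidesteps the precise multiplicity question and only requires that each true eigenvalue appear among the listed roots.
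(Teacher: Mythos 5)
Your overall framework coincides with the paper's: identify the element as $T_{w_0}$, pass to a splitting field where the Hecke algebra is semisimple, use that $\Delta^2$ is central so that it acts on each irreducible $V_\lambda$ by a scalar $z_\lambda$ (hence $\Delta$ is annihilated blockwise by $X^2 - z_\lambda$), and descend the annihilation to the generic ring using freeness (Theorem~\ref{theo:freeness}). One side remark: your claim that $\Delta$ itself acts by a scalar on each $V_\lambda$ is false (already for $\mathfrak{S}_3$ the longest element is a transposition, with eigenvalues $\pm 1$ on the $2$-dimensional representation), but this is harmless, since blockwise annihilation by $X^2 - z_\lambda$ is all that is needed.

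The genuine gap is your step (iii). What has to be proved is that each central character $z_\lambda$ equals $(-1)^i u_0^i u_1^j$ with $i+j = n(n+1)$ \emph{and} $i,j \geq 0$, and neither of your two tools delivers the nonnegativity. The specialization $u_0 = 1$, $u_1 = -1$ gives no information: there $\Delta^2$ maps to $w_0^2 = 1$ and every candidate factor degenerates to $X^2 - 1$. The homogeneity argument, even granting that $z_\lambda$ is a signed monomial, gives only $i+j = n(n+1)$; it cannot exclude a monomial such as $-u_0^{-2}u_1^{n(n+1)+2}$, which has the correct total degree but is a root of none of the listed factors. This nonnegativity is precisely the nontrivial content of the lemma, and it is the part you explicitly leave unresolved (``pinning down the precise sign and the exact pairing \dots is where the real care is needed''); your proposed fallback does not sidestep it either, because even for the bare annihilation statement one must show every eigenvalue is a root of a factor with $i,j \geq 0$. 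The paper closes exactly this gap: after renormalizing to the one-parameter form $(\sigma-q)(\sigma+1)=0$ with $q = -u_1u_0^{-1}$, it invokes Springer's theorem (\cite{GECKPFEIFFER} 9.2.2) to write $z_\lambda = q^{v_\lambda n(n+1)/2}$ with $v_\lambda = \gamma(\lambda)/\dim(\lambda)$, and then proves $0 \leq v_\lambda \leq 2$ by induction on $n$, using the branching rule to express $v_\lambda$ as a weighted average of the $v_\mu$, $\mu \vdash n$. Alternatively, you could complete your own route by actually writing down the full-twist formula you allude to: on $V_\lambda$ the full twist acts by $q^{e_\lambda}$ with $e_\lambda = \frac{n(n+1)}{2} + \sum_{b \in \lambda} c(b)$, where $c(b)$ is the content of the box $b$; since $\left| \sum_{b\in\lambda} c(b)\right| \leq \frac{n(n+1)}{2}$, one gets $e_\lambda \in [0, n(n+1)]$, and unwinding the renormalization (using that $n(n+1)$ is even) recovers exactly the factors of (\ref{eq:polmintypeA}). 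As written, however, this verification is absent from your argument, so the proof is incomplete at its decisive step.
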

\begin{proof}
The image in the statement is equal to the element $T_{w_0}$ of the standard basis
of the Hecke algebra, associated to the longest element $w_0$ of
the symmetric group $\mathfrak{S}_{n+1}$, which has length $n(n+1)/2$. Setting $q = -u_1u_0^{-1}$
and renormalizing each braided reflection $\sigma$ as $-u_0^{-1}\sigma$,
we get the equivalent formulation that, inside the Hecke algebra
defined over $\Z[q^{\pm 1}]$ by the equation $(\sigma-q)(\sigma +1)$ as in the conventions of \cite{GECKPFEIFFER},
the element $T_{w_0}$ is annihilated by the
polynomial $\prod_{0 \leq i \leq n(n+1)} (X^2-q^i)$. This Hecke algebra is semisimple over the algebraic closure
$\overline{\Q(q)}$ of the field
of fractions of $\Z[q^{\pm 1}]$, and the central element $T_{w_0}^2$ acts by
the scalar $z_{\la} \in \overline{\Q(q)}$ on the irreducible representation 
attached to the partition $\la \vdash n+1$. 

Therefore $T_{w_0}$ is annihilated
by the polynomial $\prod_{a \in A} (X^2 - a)$, where
$A$ denotes the set of
all values $v_{\la}, \la \vdash n+1$,
inside the Hecke algebra
over $\overline{\Q(q)}$. This happens already inside
the original Hecke algebra defined over $\Z[q^{\pm 1}]$, because it is a free module
over it. We thus only need to prove that each of the $z_{\la}$ is of the form $q^i$, $0 \leq i \leq n(n+1)$.

By a result of Springer (see \cite{GECKPFEIFFER} 9.2.2), we have $z_{\la} = q^{v_{\la}\frac{n(n+1)}{2}}$
with $v_{\la}\frac{n(n+1)}{2} \in \Z$, where
$v_{\la} = \gamma(\la)/\dim(\la)$,  with $\dim(\la)$ 
of the irreducible representation of the symmetric group $\mathfrak{S}_n$ attached to $\la$, and $\gamma(\la)$ is the trace of $1 + (1\ 2)$. We need to prove that $v_{\la}$ always belongs to the real interval $[0,2]$.

Notice that $v_{[1^{n+1}]} = 0$
and $v_{[n+1]} = 2$, and in particular the statement on the $z_{\la}'s$ is true for $n+1 = 2$.
We proceed to prove it by induction on $n$. For $\mu \vdash n$, let us denote $[\la : \mu]$ the multiplicity
of (the representation of $\mathfrak{S}_n$ attached to) $\mu$ in the restriction
of $\la$. Then, all sums being understood over all possible $\mu$'s, we have $\dim(\la) = \sum [\la : \mu] \dim(\mu)$
and $\gamma(\la) = \sum [\la : \mu] \gamma(\mu)$. As a consequence,
$$
v_{\la} = \gamma(\la)/\dim(\la) 
= 
\frac{1}{\dim(\la)}\sum [\la : \mu] \gamma(\mu)
= 
\frac{\sum [\la : \mu]\dim(\mu) v_{\mu}}{\sum [\la : \mu] \dim(\mu)}
$$
hence $v_{\la}$ belongs to the minimal interval containing all the $v_{\mu},\mu \vdash n$. By the
induction assumption this interval is included in (and even equal to) $[0,2]$,
and this concludes the induction step and the proof.

\end{proof}

Actually, for any given $n$, the proof provides a more specific polynomial,
as every $z_{\la}$ is explicitely computable. For the small
values of $n$, the index $i$ in the formula (\ref{eq:polmintypeA}) belongs to
the sets given in Table~\ref{tab:tabulationDeltaAn}.
\begin{table}
$$\begin{array}{|c|c|}
\hline
n+1 & i \\
\hline
\hline
2 & \{ 2, 0 \} \\
\hline
3 & \{ 6, 3,0 \} \\
\hline
4 & \{ 12, 8,6,4,0 \} \\
\hline
5 & \{ 20,15,12,10,8,5, 0 \} \\
\hline
6 & \{ 30,24,20,18,15,12,10,6,0 \} \\
\hline
\end{array}
$$
\caption{Eigenvalues of Garside's element in type $A$}
\label{tab:tabulationDeltaAn}
\end{table}

In particular, whether
the polynomial (\ref{eq:polmintypeA}) splits or not
highly depends on $n$, as it is related to the appearance of
odd integers
among the possible values of $i$.

\medskip

\section{Groups of type $G(r,1,n)$}\label{sec:gr1n}

In this section, we prove Theorem~\ref{theo:introGdeen} in the case where $W$ is the complex reflection group $G(r,1,n)$, that is, we find explicit conditions to ensure that we have an isomorphism $\widetilde{H}_0 \simeq \overline{N_0} \ltimes H_0$
in this case. 

In~\cite[Proposition 5.1]{NORMARTIN}, Henderson and the authors showed that if $W_0$ is a parabolic subgroup of type $G(r,1,k)$ of $W$ ($k\leq n$), then the group-theoretic
short exact sequence~\eqref{ses} splits, and we even have a direct product decomposition $\widetilde{B}_0= B_0\times \overline{N_0}$ in that case,
yielding a direct tensor product decomposition $\widetilde{H}_0 = H_0 \otimes \KK \overline{N_0}$ (as a $K$-algebra).

In particular, $\widetilde{H}_0 \simeq \overline{N_0} \ltimes H_0$
without any condition on the parameters. However, this does \textit{not} hold in general, as shown in~\cite[Example 6.6]{NORMARTIN}.

We begin by a few observations on normalizers of parabolic subgroups and the existence of complements (proven in~\cite{TAYLORNORM}).

\subsection{Special elements and parabolic subgroups}

Let $n\geq 1, r\geq 2$. Recall that $W=G(r,1,n)$ has a Coxeter-like presentation with $n$ generators $s_0, s_1, \dots, s_{n-1}$, with relations given by the type $B_{n}$ braid relations (with $s_0 s_1 s_0 s_1=s_1 s_0 s_1 s_0$), together with the relations $s_0^r=1$, $s_i^2=1$ for all $1\leq i \leq n-1$. This is exactly the Coxeter presentation of the Weyl group of type $B_n$, except that $s_0$ has order $r$ (for $r=2$ we recover the Weyl group of type $B_n$). 

It is shown in~\cite[Theorem 3.9]{TAYLORREFL} that every parabolic subgroup of $W$ is conjugate to a parabolic subgroup (which we will call \textit{standard}) generated by a subset $S_0$ of the set $S:=\{ s_0, s_1, \dots, s_{n-1}\}$ of Coxeter-like generators of $W$. By Lemma~\ref{lem:conjug}, it is enough to deal with the standard parabolic subgroups in order to prove $\widetilde{H}_0 \simeq \overline{N_0} \ltimes H_0$ -- which, in certain cases, may be achieved thanks to Lemma~\ref{lem:semidirect} by showing the existence of a splitting of the short exact sequence~(\ref{ses}).

Recall that $W$ is
the group of $(n\times n)$-monomial matrices whose entries are $r$-th roots of unity. In this description, $s_0$ is the diagonal matrix having $\zeta = \exp(2\pi \ii/r)$
as first entry and $1$ everywhere else, with $s_1, s_2, \dots, s_{n-1}$ acting on the basis vectors by permuting them in the obvious way. The action of $W$ on $\C^n$ is irreducible.

Let $S'=\{s_1, s_2, \dots, s_{n-1}\}$, $\mathfrak{S}_n=\langle S'\rangle$. We have $G(r,1,n)= (\mathbb{Z} / r \mathbb{Z})^n \rtimes \mathfrak{S}_n$,
where the generator $t_i$ in the $i$-th factor of $(\mathbb{Z} / r \mathbb{Z})^n$ is given by the diagonal matrix having $\zeta=\det(s_0)$ as $i$-th entry and $1$ everywhere else. It is a reflection, and can be expressed as a product of elements of $S$ as $s_0$ if $i=1$ and $s_{i-1} \cdots s_1 s_0 s_1 \cdots s_{i-1}$ if $i\geq 2$. Note that $(\mathbb{Z} / r \mathbb{Z})^n$ can also be defined as the subgroup generated by the $\mathfrak{S}_n$-conjugates of those elements which lie in $S\setminus S'=\{s_0\}$, generalizing in this specific case the semi-direct product decomposition of Coxeter groups given in~\cite{BONNAFEDYER}. 

The braid group $B=B(r,1,n)$ of $G(r,1,n)$ is isomorphic to the Artin group of type $B_n$. We denote its standard Artin generators by $\sigma_0, \sigma_1, \dots, \sigma_{n-1}$.

In~\cite{BREMKEMALLE}, it is shown that every element $w\in W$ can be written uniquely in the form $$t_{0,k_0} t_{1, k_1}\cdots t_{n-1, k_{n-1}}p(w),$$
where $p(w)\in\mathfrak{S}_n$ and $0\leq k_i \leq r-1$. Here $t_{i,k}$ is defined as $s_i s_{i-1} \cdots s_1 s_0^{k}$ if $k\neq 0$, and as $1$ if $k=0$. 

Let $W(r)$ denote the set that consists of those $w\in W$ such that in the above normal form, we have $k_i\in\{0,1\}$ for all $i=0,\dots, n-1$. Note that it is exactly the set of monomial matrices in $\{1, \zeta\}$.

Replacing every $t_{i,k}$ by $s_i s_{i-1} \cdots s_0^{k}$ if $k\neq 0$ and by $1$ if $k=0$ in the above normal form and $p(w)$ by a reduced expression of it in $\mathfrak{S}_n$, by~\cite[Lemma 1.5]{BREMKEMALLE} one obtains a reduced expression in the generating set $S$ of $W$. Moreover, the same lemma shows that, if we restrict to $W(r)$, then taking any reduced expression of $w\in W(r)$ in $S$ and replacing every $s_i$ by $\sigma_i$ yields a well-defined element $\mathbf{w}$ of $B$. Indeed, here $B$ is an Artin group of type $B_n$, and \cite[Lemma 1.5]{BREMKEMALLE} restricted to $W(r)$ shows that any two reduced expressions of $w\in W(r)$ can be related by the defining relations of $B$. Let $\mathcal{D}:=\{\mathbf{w}~|~w\in W(r)\}$. Note that $\mathcal{D}$ is independent of $r$. 

In fact, the set $\mathcal{D}$ is the set of (left or right) divisors of the Garside element $(\sigma_0\sigma_1 \dots \sigma_{n-1})^n$ of the braid monoid $B^{+}\subseteq B$, which is a Garside monoid. In other words, the morphism of monoids $B^{+}\longrightarrow W$ induces by restriction an injective set-theoretic map $\mathcal{D}\rightarrow W$, with image equal to $W(r)$. 
Note that for $r=2$ we have $W=W(r)$ and $\mathcal{D}$ is just the set of positive lifts of elements of $W$, which is a Coxeter group of type $B_n$ in that case; for $r\neq 2$ the subset $W(r)$ is \textit{not} a subgroup of $W$. 

Denoting by $\ell$ the length function on $W$ with respect to $S$, what we just recalled implies the following important property, which will be used repeatedly below: 

\begin{lemma}\label{lift_rex}
Let $u,v\in W(r)$ such that $w:=uv$ lies in $W(r)$ and $\ell(u)+\ell(v)=\ell(w)$. Then $\mathbf{u} \mathbf{v}=\mathbf{w}$. 
\end{lemma}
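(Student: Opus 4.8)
Looking at this lemma, I need to prove that for $u, v \in W(r)$ with $uv \in W(r)$ and additive lengths, the canonical lifts satisfy $\mathbf{u}\mathbf{v} = \mathbf{w}$. Let me sketch my approach.

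=== PROOF PROPOSAL (LaTeX) ===

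The plan is to exploit the explicit combinatorial description of the lift map $w \mapsto \mathbf{w}$ via reduced expressions, as recalled just above the statement. Recall that $\mathbf{w}$ is defined by taking any reduced expression of $w$ in the generating set $S$ and replacing each $s_i$ by $\sigma_i$; the key input from \cite[Lemma 1.5]{BREMKEMALLE}, restricted to $W(r)$, is that this is well-defined, i.e.\ any two reduced $S$-expressions of an element of $W(r)$ are related by the defining braid relations of $B$ alone (no use of the relations $s_0^r = 1$ or $s_i^2 = 1$).

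The main idea is to produce a single reduced expression for $w = uv$ that visibly factors through reduced expressions for $u$ and $v$. First I would choose a reduced expression $\underline{u}$ for $u$ and a reduced expression $\underline{v}$ for $v$ in $S$, of lengths $\ell(u)$ and $\ell(v)$ respectively. Concatenating them gives an expression $\underline{u}\,\underline{v}$ for $w$ of total length $\ell(u) + \ell(v)$. By hypothesis $\ell(u) + \ell(v) = \ell(w)$, so this concatenated expression is itself a \emph{reduced} expression for $w$. Since $w \in W(r)$, the well-definedness of the lift applied to this particular reduced expression yields
$$
\mathbf{w} = \overline{\underline{u}\,\underline{v}} = \overline{\underline{u}}\cdot\overline{\underline{v}},
$$
where the overline denotes the substitution $s_i \mapsto \sigma_i$ applied termwise; the second equality is immediate because the substitution is applied letter by letter and therefore respects concatenation of words.

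It remains to identify $\overline{\underline{u}}$ with $\mathbf{u}$ and $\overline{\underline{v}}$ with $\mathbf{v}$. This is exactly the content of the well-definedness statement for $u$ and $v$ individually: since $u, v \in W(r)$, the element $\mathbf{u}$ (resp.\ $\mathbf{v}$) does not depend on the chosen reduced expression, so $\overline{\underline{u}} = \mathbf{u}$ and $\overline{\underline{v}} = \mathbf{v}$. Combining these gives $\mathbf{w} = \mathbf{u}\,\mathbf{v}$, as required.

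The one point that genuinely requires care---and which I expect to be the only real obstacle---is the \emph{applicability} of \cite[Lemma 1.5]{BREMKEMALLE} to the concatenated word: the lift is only well-defined for elements of $W(r)$, and the cited result is stated for that set, so it is essential that $w = uv$ lies in $W(r)$, which is precisely the standing hypothesis. One should confirm that the substitution argument only invokes the well-definedness of the lift on elements already known to lie in $W(r)$ (namely $u$, $v$, and $w$), and never on intermediate words that might fail to represent elements of $W(r)$; since we work only with the three genuine reduced expressions above, this is the case, and the proof is complete.
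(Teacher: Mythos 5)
Your proof is correct and is essentially the paper's own argument: the paper states this lemma without a separate proof, presenting it as an immediate consequence of the just-recalled fact (from \cite[Lemma 1.5]{BREMKEMALLE}) that the lift of an element of $W(r)$ is independent of the chosen reduced expression, and your concatenation-of-reduced-expressions argument is precisely the natural way to spell out that deduction. In particular, you correctly identify that the only point needing care is that well-definedness is invoked only for $u$, $v$ and $w$, all of which lie in $W(r)$ by hypothesis.
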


\subsection{Description of the complements}

Howlett~\cite{HOWLETT} found a description of complements of parabolic subgroups of Coxeter groups inside their normalizers; it is not hard to see that his construction does not generalize to the $G(r,1,n)$ case, even though these groups are close to being Coxeter groups. In this section, we therefore recall a different way of finding complements of parabolic subgroups of $W$ inside their normalizers; this construction is due to Taylor and Muraleedaran~\cite{TAYLORNORM} and, up to slight variations, will be relevant to construct suitable lifts of these complements inside $\widetilde{H}_0$.

By the above remarks, we can assume that $S_0\subseteq S$ has at least one irreducible component which generates a Coxeter group of type $A$, since we have already a group-theoretic splitting otherwise, according to~\cite[Proposition 5.1]{NORMARTIN}.  

We first explain how to define a subgroup $U_0$ which is complementary to $W_0=\langle S_0 \rangle$ inside $N_0$ by recalling results from~\cite{TAYLORNORM}. We introduce some technical notation, illustrated in Example~\ref{ex_14} below. As in~\cite{TAYLORNORM}, we associate a partition of $n$ to $W_0$ by writing $W_0$ as a direct product of irreducible standard factors, also counting the trivial group, i.e., we write $$W_0=G(d, 1, n_0)\times\prod_{k=1}^{n} G(1,1,k)^{b_k}$$ in such a way that $1^{b_1} 2^{b_2} \cdots n^{b_{n}}$ is a partition of $n-n_0$. Let $b_{i_1}, b_{i_2}, \cdots, b_{i_\ell}$ be the sequence of those $b_k$ such that $b_k\neq 0$, where $i_1< i_2< \dots <i_\ell$. If $b_k\neq 0$, we write $R_k:=G(1,1,k)^{b_k}\cong \mathfrak{S}_k^{b_k}$. Let $d_{i_1}=n_0+1$, $d_{i_{j+1}}=d_{i_j}+b_{i_j} i_j$ for $1\leq j \leq \ell-1$. Up to replacing $W_0$ by some $\mathfrak{S}_n$-conjugate, if $k=i_j$, then we can assume that the $b_{k}$ irreducible factors of Coxeter type $A_{k-1}$ have generating sets given by $$J_k^1:=\{s_{d_k}, s_{d_k+1}, \dots, s_{d_k+k-2}\},$$
$$J_k^2:=\{s_{d_k+k}, \dots, s_{d_k+2k-2}\},$$
$$\dots,$$
$$J_k^{b_k}:=\{s_{d_k+(b_{k}-1) k}, \dots, s_{\underbrace{d_k + b_k k}_{=d_{i_{j+1}}}-2}\}.$$
We set $J_k=J_k^1 \cup J_k^2 \cup\cdots\cup J_k^{b_k}$ and we have $R_k=\langle J_k\rangle$ (note that $J_1^m=\emptyset$ for all $1\leq m \leq b_1$, as $J_1$ is the trivial group). With this choice of labeling, we get that $d_k$ indexes the leftmost node of $J_k$ in the Dynkin-like diagram associated to $W$ (see Example~\ref{ex_14} below), and the irreducible factors of type $A$ of $W_0$ are arranged from the left to the right, always separated by a single node, with their size increasing from left to right.

\begin{example}\label{ex_14}
Let $W=G(r,1,14)$ and $W_0=G(1,1,1)^3\times G(1,1,3)^2\times G(1,1,5)$. We have $n_0=0$ and $b_1=3$, $b_3=2$, $b_5=1$; $i_1=1, i_2=3, i_3=5$, and $d_1=1, d_3=4, d_5=10$. The nodes that are elements of $S_0$ are the red ones in the picture below. We have $J_3^1=\{s_4, s_5\}$, $J_3^2=\{s_7, s_8\}$, $J_5^1=J_5=\{s_{10}, s_{11}, s_{12}, s_{13}\}$.  

\begin{center}
\begin{tikzpicture}
\draw (0,0+.05) -- (1,0+.05);
\draw (0,0-.05) -- (1,0-.05);
\draw (1,0) -- (2,0);
\draw (2,0) -- (3,0);
\draw (3,0) -- (4,0);
\draw (4,0) -- (5,0);
\draw (5,0) -- (6,0);
\draw (6,0) -- (7,0);
\draw (7,0) -- (8,0);
\draw (8,0) -- (9,0);
\draw (9,0) -- (10,0);
\draw (10,0) -- (11,0);
\draw (11,0) -- (12,0);
\draw (12,0) -- (13,0);
\fill[color=cyan] (0,0) circle (.3);
\fill[color=cyan] (1,0) circle (.3);
\fill[color=cyan] (2,0) circle (.3);
\fill[color=cyan] (3,0) circle (.3);
\fill[color=red] (4,0) circle (.3);
\fill[color=red] (5,0) circle (.3);
\fill[color=cyan] (6,0) circle (.3);
\fill[color=red] (7,0) circle (.3);
\fill[color=red] (8,0) circle (.3);
\fill[color=cyan] (9,0) circle (.3);
\fill[color=red] (10,0) circle (.3);
\fill[color=red] (11,0) circle (.3);
\fill[color=red] (12,0) circle (.3);
\fill[color=red] (13,0) circle (.3);
\draw (0,0) node {$0$};
\draw (1,0) node {$1$};
\draw (1,-0.65) node {$=d_1$};
\draw (2,0) node {$2$};
\draw (3,0) node {$3$};
\draw (4,0) node {$4$};
\draw (4,-0.65) node {$=d_3$};
\draw (5,0) node {$5$};
\draw (6,0) node {$6$};
\draw (7,0) node {$7$};
\draw (8,0) node {$8$};
\draw (9,0) node {$9$};
\draw (10,0) node {$10$};
\draw (10,-0.65) node {$=d_5$};
\draw (11,0) node {$11$};
\draw (12,0) node {$12$};
\draw (13,0) node {$13$};
\end{tikzpicture}
\end{center}

\end{example}

We now assume that the conditions given in Theorem \ref{theo:introGdeen} are satisfied,
providing suitable polynomials $T_k,T_{0,k} \in \KK[X]$.
As shown in \cite{TAYLORNORM}, for every $k$ such that $b_k\neq 0$, there is a subgroup $N_k\subseteq N_0$, which normalizes $R_k$ (and centralizes every $R_j$ with $j\neq k$) and acts as the reflection group $G(r,1,b_k)$ on a suitable subspace of the natural module $V=\C^n$ (see~\cite[Theorem 3.12 and its proof]{TAYLORNORM}). Moreover, $U_0=\prod_{k, b_k\neq 0} N_k$ is a complement to $W_0$ inside $N_0$. We give explicit generators of the subgroups $N_k$, corresponding to the Coxeter-like generators of $G(r,1,b_k)$.

Let $e_1, e_2, \dots, e_n$ be the standard basis of $\C^n$, where $W < \GL_n(\C)$.
The direct product
decomposition of $W_0$ corresponds to a tensor product decomposition $\C^n = \C^{n_0} \oplus \bigoplus_{k=1}^{n}
\C^{k} \otimes \C^{b_{k}}$ of the space, where the element $a_p\otimes b_q$ of the
canonical basis of $\C^{k} \otimes \C^{b_{k}}$ is mapped to
$e_{d_k+(q-1)k+p-1}$,
 when $b_k \neq 0$.  Then $G(r,1,b_k)$ acts on each
$\C^{k} \otimes \C^{b_{k}}$ as $f_k = \un \otimes \rho$, where $\rho$ is the natural representation of $G(r,1,b_k)$ on $\C^{b_k}$ and $\un$ is the trivial representation
on $\C^k$. In particular, $s_k^{(0)}:=f_k(s_0)$ acts by multiplication by $\zeta$ on each $a_p \otimes b_1$
and by the identity on the other basis vectors, while $s_k^{(i)}:=f_k(s_i)$ acts
through $a_r \otimes b_p \mapsto a_r \otimes b_{s_i(p)}$.

In fact, we have $s_k^{(1)}, \dots, s_{k}^{(b_k-1)}\in \langle S'\rangle\subseteq \mathfrak{S}_n$, while $s_k^{(0)}=t_{d_k} t_{d_k+1} \cdots t_{d_k+k-1}\in (\mathbb{Z} / r \mathbb{Z})^n$. 

Hence the semidirect product decomposition of $N_k$ (obtained by viewing it as the reflection group $G(r,1,b_k)$) is compatible with the semidirect product decomposition $(\mathbb{Z} / r \mathbb{Z})^n \rtimes \mathfrak{S}_n$ of $W$, in the sense that it is the same as the intersection of the semidirect product decomposition of $W$ with $N_k$. In fact, as $s_k^{(1)}, \dots, s_{k}^{(b_k-1)}\in \mathfrak{S}_n \cap N_W(R_k)$, we have $s_k^{(1)}, \dots, s_{k}^{(b_k-1)}\in N_{\mathfrak{S}_n}(R_k)$, and they turn out to be among the generators of the complement to $R_k$ inside $\mathfrak{S}_n$ as described by Howlett~\cite{HOWLETT}: more precisely, for all $i=1, \dots, b_k-1$, we have \begin{equation}\label{eq_si} s_k^{(i)}=w_0(K_i) w_0(J_k^{i+1}\cup J_k^i),\end{equation} where $K_i=J_k^{i+1}\cup J_k^i\cup\{s_{d_k-1+ i k} \}$ is the connected closure of $J_k^i$ and $J_k^{i+1}$ (obtained by just adding the unique simple reflection between them in the Dynkin diagram) and $w_0(X)$ denotes the longest element in the Coxeter group of type $A$ generated by $X\subseteq S'$. 

Also note that $s_k^{(0)}, s_k^{(1)}, \dots, s_{k}^{(b_k-1)}$ all lie in $W(r)$.  

\begin{example}
Let $W=G(r,1,4)$, $S_0=\{s_1, s_3\}$. Then $W_0=G(1,1,2)^2$, hence $2$ is the only integer $k$ such that $b_k\neq 0$ and we have $S_0=J_2$, $b_2=2$. Hence $U_0=N_2\cong G(r,1,2)$, $E_2^1=\{e_1, e_2\}$, $E_2^2=\{e_3, e_4\}$, and the Coxeter-like generators $s_2^{(0)}$, $s_2^{(1)}$ of $N_2$ are given by the matrices $$s_2^{(0)}=
\begin{pmatrix}
  \begin{matrix}
  \zeta & 0 \\
  0 & \zeta
  \end{matrix}
  & \rvline & 0 \\
\hline
  0 & \rvline &
  \begin{matrix}
  1 & 0 \\
  0 & 1
  \end{matrix}
\end{pmatrix}
, ~s_2^{(1)}=
\begin{pmatrix}
0
  & \rvline &   \begin{matrix}
  1 & 0 \\
  0 & 1
  \end{matrix} \\
\hline
  \begin{matrix}
  1 & 0 \\
  0 & 1
  \end{matrix} & \rvline &
  0
\end{pmatrix}.$$
\end{example}

\subsection{Lifting the complement}

Let $\tau_1=\sigma_0\in B$ and for all $2\leq i \leq n$, let $$\tau_i:=\sigma_{i-1} \sigma_{i-2} \cdots \sigma_1 \sigma_0 \sigma_1 \cdots \sigma_{i-1}\in B.$$ Note that $\tau_i$ lies in $\mathcal{D}$, and is the simple element attached to $t_i$ in the Artin monoid. 
We now construct lifts of the generators of $N_k$ in $\widetilde{B}_0$. As $s_k^{(i)}\in W(r)$ for all $i=0, \dots, b_k-1$ we can take their lifts in $\mathcal{D}\subseteq B^+\subseteq B$ (obtained by lifting any reduced expression of them in $B$), which we denote by $\sigma_k^{(i)}$. As $s_k^{(i)}=\pi(\sigma_k^{(i)})\in U_0\subseteq N_0$, we have $\sigma_k^{(i)}\in \hat{B}_0$ for all $i=0, \dots, b_k-1$, and we can take their image in $\widetilde{B}_0$, which we still denote by $\sigma_k^{(i)}$.

Note that $s_k^{(0)}=t_{d_k} t_{d_k+1} \cdots t_{d_k+k-1}$ and we have $\ell(s_k^{(0)})=\sum_{i=0}^{k-1} \ell(t_{d_k+i})$, hence by Lemma~\ref{lift_rex} we have $\sigma_k^{(0)}=\tau_{d_k} \tau_{d_k+1}\cdots \tau_{d_k+k-1}$. 

\begin{lemma}\label{lem:power_d_garside}
Let $W = G(r,1,n)$ and $W_0$ a standard parabolic subgroup as above. Let $k$ be such that $b_k\neq 0$.
In $\widetilde{B}_0$, we have $$(\sigma_k^{(0)})^r= \Delta_{J_k^1}^2,$$ where $\Delta_{J_k^1}$ is the image in $\widetilde{B}_0$ of the simple element attached to the longest element $w_0(J_k^1)$ in the type $A$ parabolic subgroup of $W$ generated by $J_k^1$. In particular, if $k=1$, then $(\sigma_k^{(0)})^r=1$. 
\end{lemma}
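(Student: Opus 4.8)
The plan is to interpret $(\sigma_k^{(0)})^r$ geometrically as a full twist and then reduce it modulo $\QQ0$. Write $P = \{d_k, d_k+1, \dots, d_k+k-1\}$ for the $k$ coordinates involved, and let $W_P \cong G(r,1,k)$ be the full reflection subgroup of $W$ supported on the coordinate subspace $\C^P$, with braid group $B_P$. Its reflecting hyperplanes are the coordinate hyperplanes $z_i = 0$ ($i \in P$) and the diagonals $z_i = \zeta^j z_l$ ($i < l$ in $P$, $0 \le j \le r-1$), and the band generators $\tau_{d_k}, \dots, \tau_{d_k+k-1}$ together with $\sigma_{d_k}, \dots, \sigma_{d_k+k-2}$ generate $B_P$ inside $B$. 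Since $s_k^{(0)} = t_{d_k}\cdots t_{d_k+k-1}$ acts as the scalar $\zeta\,\Id$ on $\C^P$, it generates $Z(W_P)$, which is cyclic of order $r$. I would first identify $\sigma_k^{(0)} = \tau_{d_k}\cdots\tau_{d_k+k-1}$ (as given just before the statement) with the canonical central element $z_{B_P}$ of $B_P$ described in Section~\ref{sect:genres}, namely the homotopy class of $t \mapsto \exp(2\pi\ii t/r).*$. Concretely, $B_P$ is the Artin group of type $B_k$ and $z_{B_P}$ is its Garside element $(\sigma_0\sigma_1\cdots\sigma_{k-1})^k$; one checks that this positive element equals $\tau_{d_k}\cdots\tau_{d_k+k-1}$, for instance by Lemma~\ref{lift_rex} together with the fact that both are the positive lift of $\zeta\,\Id$.

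Granting this identification, $(\sigma_k^{(0)})^r = z_{B_P}^r$ is the full twist of $B_P$, i.e. the monodromy of the loop $t \mapsto \exp(2\pi\ii t).*$ rotating all $k$ coordinates once around the origin. Next I would invoke the standard fact that such a full twist equals an ordered product of the meridians around all reflecting hyperplanes of $W_P$, each crossed exactly once: a meridian $\tau_i^r$ around each $z_i = 0$, and one meridian around each diagonal $z_i = \zeta^j z_l$. The key point is then to compare this arrangement with $\mathcal{A}_0$. Because the first block $J_k^1 = \{s_{d_k}, \dots, s_{d_k+k-2}\}$ generates the full symmetric group $\mathfrak{S}(P)$ on $P$, and because $d_k \geq n_0+1$ so that $P$ is disjoint from the support of the remaining factors of $W_0$, the hyperplanes of $\mathcal{A}_0$ contained in the arrangement of $W_P$ are exactly the diagonals $z_i = z_l$ ($i<l$ in $P$); all the $z_i = 0$ and all the $z_i = \zeta^j z_l$ with $j \neq 0$ lie outside $\mathcal{A}_0$.

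Passing to $\widetilde{B}_0 = \hat{B}_0/\QQ0$ now kills precisely the meridians around the latter hyperplanes, since these generate $\QQ0$ and conjugation by elements of $B_P \subseteq \hat{B}_0$ sends meridians to meridians. Hence in $\widetilde{B}_0$ the element $(\sigma_k^{(0)})^r$ reduces to the product of the surviving meridians, namely those around the type-$A$ hyperplanes $z_i = z_l$, each occurring once. This surviving product is the image of a central element and lies in the type-$A$ braid group $\langle J_k^1\rangle$; being central there, pure, and mapping to $(1,\dots,1)$ in the abelianization $\Z^{\binom{k}{2}}$ of the corresponding pure braid group, it must be the full twist $\Delta_{J_k^1}^2$. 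This gives $(\sigma_k^{(0)})^r = \Delta_{J_k^1}^2$ in $\widetilde{B}_0$. For $k = 1$ the group $W_P = G(r,1,1)$ has the single hyperplane $z_{d_1} = 0$, whose meridian $\tau_{d_1}^r$ is killed, so the full twist reduces to $1$ and $(\sigma_1^{(0)})^r = 1$, as claimed.

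The main obstacle is the bookkeeping in the full-twist decomposition: I must ensure that, after deleting the meridians lying outside $\mathcal{A}_0$, the surviving factor is exactly $\Delta_{J_k^1}^2$ and not merely some power of it times an ambiguous remainder. Appealing to the abelianization together with the centrality of $z_{B_P}^r$ is what lets me avoid writing a fully explicit ordered product of meridians; the two genuine verifications are the identification $\sigma_k^{(0)} = z_{B_P}$ and the statement that each surviving diagonal is crossed exactly once under the rigid rotation, both of which follow from the geometric rotation picture.
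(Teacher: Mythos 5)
Your overall strategy (decompose a full twist into meridians, delete those outside $\mathcal{A}_0$, and pin down the survivor by centrality plus an abelianization count) is appealing, and its final step is sound; but the proof breaks at its foundational identification $\sigma_k^{(0)} = z_{B_P}$. The element $\sigma_k^{(0)} = \tau_{d_k}\cdots\tau_{d_k+k-1}$ is a product of \emph{positive band generators} defined through the Artin presentation of $B$, and whenever $d_k \geq 2$ (i.e.\ in all cases except when $J_k^1$ starts at the first node) it is neither equal nor even conjugate to the image of the canonical rotation class of the braid group of $W_P$ under a parabolic embedding. This is visible on abelianizations, which are conjugation invariants: writing $H_1(B) = \Z a \oplus \Z b$ with $a$ the class of $\sigma_0$ and $b$ the common class of the $\sigma_i$, $i \geq 1$, one has $[\tau_i] = a + 2(i-1)b$, hence $[\sigma_k^{(0)}] = ka + k(k+2d_k-3)b$, whereas the geometric rotation class has image $ka + k(k-1)b$ (its $r$-th power is a product of one meridian per hyperplane of $W_P$; meridians around the $z_i = 0$ have class $ra$ and diagonal meridians have class $2b$). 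These agree only when $d_k = 1$. Concretely, the point your proof misses is that $\tau_i$ is \emph{not} a braided reflection around $z_i = 0$ for $i \geq 2$, and $\tau_i^r$ is \emph{not} a meridian around $z_i = 0$: it is such a meridian multiplied by one meridian around each hyperplane $z_j = \zeta^s z_i$ with $j < i$, $0 \le s < r$. Note also the internal inconsistency in your decomposition: since the $\tau_i$ pairwise commute, $(\sigma_k^{(0)})^r = \prod_{i \in P}\tau_i^r$ exactly, so if each $\tau_i^r$ were itself the meridian around $z_i = 0$, your claimed full-twist factorization would leave no room at all for the diagonal meridian factors.

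The lemma is nevertheless true, because the discrepancy between $\sigma_k^{(0)}$ and the geometric rotation consists precisely of meridians around the mixed hyperplanes $z_j = \zeta^s z_i$ with $j < d_k \leq i$, all of which lie outside $\mathcal{A}_0$ and hence in $\QQ0$. But establishing this is exactly the content of the lemma, and it is what the paper's computation does: its first half rewrites $\tau_i$ modulo $\QQ0$ by stripping off the factors $\sigma_{i-1}\cdots\sigma_{j+1}\sigma_j^2\sigma_{j+1}^{-1}\cdots\sigma_{i-1}^{-1}$ with $j < d_k$, obtaining
\begin{equation*}
\tau_i^r \equiv \sigma_{i-1}\cdots\sigma_{d_k+1}\sigma_{d_k}^2\sigma_{d_k+1}\cdots\sigma_{i-1} \pmod{\QQ0},
\end{equation*}
which is a nontrivial product of meridians around the diagonals $z_j = z_i$, $j \in P$, $j < i$ (not $1$, as your identification would force), and then assembles these into $\Delta_{J_k^1}^2$ via a type-$A$ Garside identity. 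So your proposal assumes away the key step rather than proving it; to repair it you would need to show $\sigma_k^{(0)} \equiv z_{B_P} \pmod{\QQ0}$, i.e.\ to control the crossings of the band generators with the hyperplanes involving coordinates outside $P$ --- which is essentially the computation you were trying to avoid.
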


\begin{proof}
We claim that in $\widetilde{B}_0$, for all $d_k\leq i \leq d_k+k-1$, we have $\tau_{d_k}^d=1$ and $$(\tau_i)^d=\sigma_{i-1} \sigma_{i-2} \cdots \sigma_{d_k+1} \sigma_{d_k}^2 \sigma_{d_k+1} \cdots \sigma_{i-1}$$ for $i>d_k$. Note that $$\tau_i=\sigma_{i-1} \cdots \sigma_{1} \sigma_{0} \sigma_{1}^{-1} \cdots \sigma_{i-1}^{-1}(\sigma_{i-1} \cdots \sigma_{2} \sigma_{1}^2 \sigma_{2}^{-1} \cdots \sigma_{i-1}^{-1})(\sigma_{i-1} \cdots \sigma_{3} \sigma_{2}^2 \sigma_{3}^{-1} \cdots \sigma_{i-1}^{-1})\cdots \sigma_{i-1}^2.$$
For all $j\leq d_k-1$, we have $\sigma_{i-1} \cdots \sigma_{j+1} \sigma_{j}^2 \sigma_{j+1}^{-1} \cdots \sigma_{i-1}^{-1}\in \QQ0$ (since the reflection $s_{i-1} \cdots s_j \cdots s_{i-1}$ is not in $W_0$ in that case, as the reflection $s_{d_k-1}$ appears in the reduced expression $s_{i-1} \cdots s_j \cdots s_{i-1}$). Hence deleting these factors, we get that 
\begin{eqnarray*}
\tau_i&=&\sigma_{i-1} \cdots \sigma_{1} \sigma_{0} \sigma_{1}^{-1} \cdots \sigma_{i-1}^{-1}(\sigma_{i-1} \cdots \sigma_{2} \sigma_{d_k}^2 \sigma_{2}^{-1} \cdots \sigma_{i-1}^{-1})(\sigma_{i-1} \cdots \sigma_{3} \sigma_{d_k+1}^2 \sigma_{3}^{-1} \cdots \sigma_{i-1}^{-1})\cdots \sigma_{i-1}^2\\
&=&\sigma_{i-1} \cdots \sigma_{1} \sigma_0 \sigma_1^{-1} \cdots \sigma_{d_k-1}^{-1} \sigma_{d_k} \cdots \sigma_{i-1}.
\end{eqnarray*}
Raising to the power $d$ we obtain 
$$(\tau_i)^d=\sigma_{i-1} \cdots \sigma_{1} \sigma_{0} (\underbrace{\sigma_1^{-1} \cdots \sigma_{d_k-1}^{-1} \sigma_{d_k} \cdots \sigma_{i-1}^2 \sigma_{i-2} \cdots \sigma_1}_{=:\beta} \sigma_0)^{d-1} \sigma_1^{-1} \cdots \sigma_{d_k-1}^{-1} \sigma_{d_k} \cdots \sigma_{i-1}.$$
We claim that $\beta\in \QQ0$. Indeed, we have 
\begin{eqnarray*}
\beta&=&\sigma_1^{-1} \cdots \sigma_{d_k-1}^{-1} \sigma_{d_k} \cdots \sigma_{i-1}^2 \sigma_{i-2} \cdots \sigma_1\\
&=&(\sigma_1^{-1} \cdots \sigma_{d_k-1}^{-1} \sigma_{d_k} \cdots \sigma_{i-1}^2 \sigma_{i-2}^{-1} \cdots \sigma_{d_k+1}^{-1} \sigma_{d_k} \cdots \sigma_1) \cdots (\sigma_1^{-1} \sigma_2^{-1} \cdots \sigma_{d_k-1}^{-1} \sigma_{d_k}^2 \sigma_{d_k-1} \cdots \sigma_1),
\end{eqnarray*}
and all the factors are in $\QQ0$ (since $s_{d_k-1}\notin W_0$). Hence this gives $$(\tau_i)^d=\sigma_{i-1} \cdots \sigma_{1} \underbrace{\sigma_{0}^d}_{\in \QQ0} \sigma_1^{-1} \cdots \sigma_{d_k-1}^{-1} \sigma_{d_k} \cdots \sigma_{i-1}$$ which lies in $\QQ0$ if $i=d_k$ and is equal to $\sigma_{i-1} \cdots \sigma_{d_k}^2 \sigma_{d_k+1}\cdots \sigma_{i-1}$ otherwise. This shows the claim. Now using the fact that the $\tau_i$'s commute with each other, we get that $$(\sigma_k^{(0)})^d=\sigma_{d_k}^2(\sigma_{d_k+1} \sigma_{d_k}^2 \sigma_{d_k+1} ) \cdots (\sigma_{d_k+k-2} \sigma_{d_k+k-3} \cdots \sigma_{d_k}^2 \sigma_{d_k+1}\cdots \sigma_{d_k+k-2})=\Delta_{J_k^1}^2,$$
where the last equality follows from the fact that in a type $A_m$ braid group with standard generators $\sigma_1, \sigma_2, \dots, \sigma_m$ (and corresponding Coxeter generators $s_1, s_2, \dots, s_m$), we have 

\begin{eqnarray*}
\Delta_{m-1}^2 \sigma_m \sigma_{m-1} \cdots \sigma_1^2 \sigma_2 \cdots \sigma_m&=&\Delta_{m-1} (\Delta_{m-1} \sigma_m \sigma_{m-1} \cdots \sigma_1) \sigma_1 \sigma_2 \cdots \sigma_m\\
&=&\Delta_{m-1} \Delta_m \sigma_1 \sigma_2\cdots \sigma_m=(\Delta_{m-1} \sigma_m \sigma_{m-1} \cdots \sigma_1) \Delta_m=\Delta_m^2, 
\end{eqnarray*}
where $\Delta_m$ is the lift of the longest element in $\langle s_1, s_2,\dots, s_m\rangle$ and $\Delta_{m-1}$ is the lift of the longest element of the parabolic subgroup $\langle s_1, s_2, \dots, s_{m-1} \rangle$. 

\end{proof}

We would like to show that our lifts $\sigma_k^{(i)}$ satisfy the defining relations of the Coxeter-like presentation of $U_0$. Unfortunately, this is not true, as it would give a splitting of the short exact sequence~(\ref{ses}). But we shall prove that the braid relations between these generators are satisfied already in $B$; the problem will come from the fact that the generators $\sigma_k^{(0)}$ fail to be of order $d$ inside $\widetilde{B}_0$, as we have just shown in Lemma~\ref{lem:power_d_garside}. We will need to twist our lift by a suitable element of $H_0$ to get the order relation in that case.   

\subsubsection{Braid relations}\label{subsub:braid} The fact that the lifts $\sigma_k^{(i)}$ satisfy the same braid relations as their images in $W$, that is, that
\begin{itemize}
\item $\sigma_k^{(0)}\sigma_k^{(1)}\sigma_k^{(0)}\sigma_k^{(1)}=\sigma_k^{(1)}\sigma_k^{(0)}\sigma_k^{(1)}\sigma_k^{(0)}$ for all $k$ such that $1 < b_k$,
\item $\sigma_k^{(i)} \sigma_k^{(i+1)} \sigma_k^{(i)} = \sigma_k^{(i+1)} \sigma_k^{(i)} \sigma_k^{(i+1)}$ for all $1 \leq i < b_k-1$,
\item $\sigma_k^{(i)}\sigma_k^{(j)}=\sigma_k^{(j)}\sigma_k^{(i)}$ for all $0 \leq i < j-1 \leq b_k-2$,
\item $\sigma_k^{(i)}\sigma_\ell^{(j)}=\sigma_\ell^{(j)}\sigma_k^{(i)}$ for all $k\neq \ell$, 
\end{itemize}

follows from the fact that if we consider these relations inside $U_0$ (that is, if we replace every $\sigma_k^{(i)}$ by $s_k^{(i)}$), then each side of each relation is an element of $W(r)$, and moreover, it is readily checked using~\cite[Lemma 1.5]{BREMKEMALLE} that the length of each side in terms of the generating set $S$ is the sum of the lengths of the various factors. For instance, we have 
$$\ell(s_k^{(0)}s_k^{(1)}s_k^{(0)}s_k^{(1)})=\ell(s_k^{(0)})+\ell(s_k^{(1)})+\ell(s_k^{(0)})+\ell(s_k^{(1)}).$$
Hence the fact that the relations above are satisfied in $B$ is a consequence of Lemma~\ref{lift_rex}.

\begin{remark}\label{rmq_braid_complement}
In fact, as $U_0$ is isomorphic to a complex reflection group which is a direct product of groups of type $G(r,1,b_k)$, we can consider its braid group $\mathcal{B}(U_0)$, which is a direct product of Artin groups of type $B_{b_k}$. What we did above is nothing but constructing a group morphism $\psi: \mathcal{B}(U_0)\rightarrow \widetilde{B}_0$. 
\end{remark}

\subsubsection{Order relations} \label{sect:orderrelsGr1n} Consider $s_k^{(i)}$ with $1 \leq i \leq b_k-1$. As $i \geq 1$ we have that $s_k^{(i)}\in\mathfrak{S}_n \cap N_W(G_0)$, where $G_0=W_0\cap \mathfrak{S}_n$. Hence we have $s_k^{(i)}\in N_{\mathfrak{S}_n}(G_0)$. The group $G_0$ is a (standard) parabolic subgroup of the type $A$ Coxeter group $\mathfrak{S}_n$ and $s_k^{(i)}$ is equal to $w_0(K_i) w_0(J_k^{i+1}\cup J_k^i)$ (see~\eqref{eq_si}), which is a generator of the Howlett complement $U^{G}_{0}$ to $G_0$ inside $N_{\mathfrak{S}_n}(G_0)$. 

In~\cite{NORMARTIN}, the definition of the Howlett complement $U^{G}_{0}$ was generalized to any reflection subgroup $G_0$ of a (possibly infinite) Coxeter group $G$. Let $\pi$ denote the quotient map $\mathcal{B}(G)\twoheadrightarrow G$. In the case where $G$ is finite, it was shown by Henderson and the authors~(see \cite[Corollary 3.12]{NORMARTIN}) that the set-theoretic map $U^{G}_0 \rightarrow \pi^{-1}(N_G(G_0))$ given by taking positive lifts of elements in $U^G_0\subseteq G$ in the Artin group $\mathcal{B}(G)$ of $G$ becomes an injective group homomorphism $U^G_0\rightarrow \pi^{-1}(N_G(G_0)) / \QQ0^G$ when passing to the quotient on the right side, where $\QQ0^G$ is the subgroup of the pure braid group of $G$ generated by the squares of the braided reflections around hyperplanes attached to reflections which are not in $G_0$. 

Applied to our case with $G=\mathfrak{S}_n$, where $\mathcal{B}(G)$ is the usual braid group $\mathcal{B}_n$ on $n$
strands, this result implies that the squares of the $\sigma_k^{(i)}$'s inside $\mathcal{B}_n$
belong to $\QQ0^G$. Indeed, they are images of $1=(s_k^{(i)})^2$ under the above injective morphism. Now, the morphism $\mathcal{B}_n \to B$ mapping the $j$-th Artin generator $\sigma_j$ for $1 \leq i \leq n-1$ to
$\sigma_j \in B$ is injective -- for instance because its composition with $B \to \mathcal{B}_{n+1}$, $\sigma_0 \mapsto \sigma_1^2$, $\sigma_j \mapsto \sigma_{j+1}$ for $j \geq 1$ is the injective morphism
$\mathcal{B}_n \to \mathcal{B}_{n+1}$, $\sigma_i \mapsto \sigma_{i+1}$ of adding one strand on the left. Moreover,
it maps any meridian around some reflecting hyperplane in $\mathfrak{S}_n$ (that is, a conjugate
of some squared generator of $\mathcal{B}_n$) to a meridian around the same reflecting hyperplane for $W$,
hence embeds $\QQ0^G$ inside $\QQ0$. Therefore $(\sigma_k^{(i)})^2\in \QQ0$ and, inside $\widetilde{B}_0$, we have $(\sigma_k^{(i)})^2=1$.

It remains to treat the case of the generators $s_k^{(0)}$, for all $k$ such that $b_k\neq 0$. We claim that for every such $k$, $\sigma_k^{(0)} T_k(\Delta_{J_k^1})$ is a lift of $s_k^{(0)}$ inside $\widetilde{H}_0$,
where $T_k \in \KK[X]$ is a polynomials satisfying the assumptions of Theorem \ref{theo:introGdeen}. Here, we abuse notation and write $\Delta_{J_k^1}$ for the image of the Garside element (i.e., the lift of the longest element of $\langle J_k^1\rangle$) of the type $A$ Artin group $\mathcal{B}(\langle J_k^1 \rangle) \subseteq B$ attached to $\langle J_k^1\rangle$ inside the Hecke algebra $H_0$ of $W_0$. As $\widetilde{H}_0$ is a quotient of the group algebra of $\widetilde{B}_0$ which contains the Hecke algebra $H_0$ of the parabolic subgroup $W_0$ (which contains $\langle J_k^1 \rangle$), this gives a well-defined element of $\widetilde{H}_0$.

Note that $\sigma_k^{(0)}$ commutes with every Artin generator $\sigma_i$ of $\mathcal{B}(\langle J_k^1\rangle)$, hence $\sigma_k^{(0)} \Delta_{J_k^1}=\Delta_{J_k^1} \sigma_k^{(0)}$. Let $T_k$ be a polynomial as in the assumptions
of Theorem \ref{theo:introGdeen}.
By Lemma~\ref{lem:power_d_garside}, we have $$(\sigma_k^{(0)} T_k(\Delta_{J_k^1}))^d=(\sigma_k^{(0)})^d T_k(\Delta_{J_k^1})^d= (\Delta_{J_k^1}^2) T_k(\Delta_{J_k^1})^d = 1$$

Note that $\sigma_k^{(i)}$ commutes with $\Delta_{J_k^1}$ whenever $2 \geq i$, hence the commutation relation between $T(\Delta_{J_k^1}) \sigma_k^{(0)}$ and $\sigma_k^{(i)}$ is satisfied inside $H_0\subseteq \widetilde{H}_0$ in that case. Also note that $\sigma_k^{(1)} \Delta_{J_k^j}=\Delta_{J_k^\ell} \sigma_k^{(1)}$, $\{j, \ell\}=\{1,2\}$ (as the images in $\mathfrak{S}_n$ of the various factors satisfy the same relation, with the length of the various factors adding). As $\Delta_{J_k^1}$ and $\Delta_{J_k^2}$ commute with each other and $\sigma_k^{(0)} \sigma_k^{(1)} \sigma_k^{(0)} \sigma_k^{(1)}=\sigma_k^{(1)} \sigma_k^{(0)} \sigma_k^{(1)} \sigma_k^{(0)}$ as seen in Subsection~\ref{subsub:braid}, we deduce that
\begin{align*}
\sigma_k^{(0)} T(\Delta_{J_k^1}) \sigma_k^{(1)} \sigma_k^{(0)} T(\Delta_{J_k^1}) \sigma_k^{(1)}&= T(\Delta_{J_k^1}) T(\Delta_{J_k^2}) \sigma_k^{(0)} \sigma_k^{(1)} \sigma_k^{(0)} \sigma_k^{(1)}\\&=T(\Delta_{J_k^1}) T(\Delta_{J_k^2}) \sigma_k^{(1)} \sigma_k^{(0)} \sigma_k^{(1)} \sigma_k^{(0)}\\&=T(\Delta_{J_k^2})T(\Delta_{J_k^1}) \sigma_k^{(1)} \sigma_k^{(0)} \sigma_k^{(1)} \sigma_k^{(0)}\\&=\sigma_k^{(1)} \sigma_k^{(0)} T(\Delta_{J_k^1})\sigma_k^{(1)} \sigma_k^{(0)} T(\Delta_{J_k^1}),
\end{align*}
hence the braid relation between $\sigma_k^{(0)} T(\Delta_{J_k^1})$ and $\sigma_k^{(1)}$ is still satisfied. This shows that one can lift $N_k$, for all $k$ such that $b_k\neq 0$. 

To conclude this case, note that the defined lifts of two generators belonging to different factors $N_p$ and $N_q$ of $U_0$ have to commute with each other. Indeed, on the one hand we have that $\Delta_{J_p^1}$ and $\Delta_{J_q^1}$ commute with each other for all $p,q$. On the other hand, $\Delta_{J_p^1}$ (resp. $\Delta_{J_q^1}$) also commutes with all $\sigma_q^{(i)}$ (resp. $\sigma_p^{(i)}$), and $s_p^{(i)}$, $s_q^{(j)}$ are elements in $W(r)$ with product also in $W(r)$ and such that ${s_p^{(i)}}\cdot {s_q^{(j)}}={s_q^{(j)}}\cdot {s_p^{(i)}}$ and $\ell({s_p^{(i)}}\cdot {s_q^{(j)}})=\ell({s_p^{(i)}})+\ell({s_q^{(j)}})$ for all $i,j$ and $p\neq q$, hence these observations together with Lemma~\ref{lift_rex} allow us to conclude that generators from distinct components $N_p$, $N_q$ commute with each other.  

Hence, we just showed that, modifying the group morphim $\psi$ from Remark~\ref{rmq_braid_complement} by twisting $\sigma_k^{(0)}$ by $\sigma_k^{(0)} T_k(\Delta_{J_k^1})$ for all $k$ such that $b_k\neq 0$, we get a group morphism $\psi'$ from $U_0\cong \overline{N_0}$ to $\widetilde{H}_0^\times$, which satisfies the assumptions of Lemma~\ref{lem:semidirect}. This concludes the proof of Theorem \ref{theo:introGdeen} in this case.

\section{Groups of type $G(de,e,n)$}
\label{sect:deen}

In this section, we complete the proof of Theorem \ref{theo:introGdeen}, using
the results of the preceeding section for $G(r,1,n)$ by setting $r = de$.

\subsection{General considerations}

Let $W = G(de,e,n)$, and $W^! = G(r,1,n)$ with $r = de$.
We denote by $\mathcal{A}$, $\mathcal{A}^!$ the corresponding
hyperplane arrangements and, for $L_0 \subset \C^n$, we denote by $W_0$ and $W_0^!$ the parabolic subgroups defined
as the pointwise stabilizers of $L_0$ in $W$ and $W^!$, respectively.
We have $W_0 = W \cap W_0^!$. 

Let $\mathcal{E}$ (resp. $\mathcal{E}^!$) denote the collection of all hyperplanes in $\mathcal{A}$ (resp. $\mathcal{A}^!$)
containing $L_0$. We can and will assume that
$L_0$ is the intersection of some collection of hyperplanes of $\mathcal{A}$,
which implies that $L_0$ is equal to the intersection $\bigcap \mathcal{E}$ of the hyperplanes inside $\mathcal{E}$ (and hence also that $L_0=\bigcap \mathcal{E}'$). 

Since $W_0$ (resp. $W_0^!$) is generated by its reflections, we have $$g \in N_0 := N_W(W_0)\Leftrightarrow g(\mathcal{E}) = \mathcal{E} \Leftrightarrow g(L_0) = L_0$$
and 
$$g \in N_0^!: = N_{W^!}(W_0^!) \Leftrightarrow g(\mathcal{E}^!) = \mathcal{E}^!.$$

Hence, as $g(L_0)=L_0$ implies that $g(\mathcal{E}^!)=\mathcal{E}^!$, we get $N_0\subset N_0^! \cap W$. Conversely, if $g \in N_0^! \cap W$
we get $g(\mathcal{E}^!)=\mathcal{E}^!$ hence $g(\bigcap \mathcal{E}^!)
= \bigcap \mathcal{E}^!$. But $\bigcap \mathcal{E} = L_0 =\bigcap \mathcal{E}^!$ hence $\bigcap \mathcal{E}^! = L_0$ and $g(L_0) = L_0$, whence $g \in N_0$. This proves the following:

\begin{lemma}\label{link_normalisateurs}
We have $N_0= N_0^! \cap W$.
\end{lemma}

At the level of braid groups, we denote by $B$ the braid group of $W$, $B^!$ the braid group
of $W^!$. Let $\pi : B \onto W$ and $\pi^! : B^! \to W^!$ be the natural
projections, with kernels $P,P^!$, respectively. Recall that $W$ is a normal subgroup of $W^!$ with cyclic quotient $C$ of order $e$.  

We set $M(\mathcal{A}) = \C^n \setminus \bigcup \mathcal{A}$, $M(\mathcal{A}^!) = \C^n \setminus \bigcup \mathcal{A}^!$. The inclusion 
map $M(\mathcal{A}^!) \subset M(\mathcal{A})$ induces a surjective
homomorphism $\pi_1(M(\mathcal{A}^!)) \onto \pi_1(M(\mathcal{A}))$.
Recall that $\hat{B}_0^! =  \pi_1(M(\mathcal{A}^!)/N_0^!)$, $\hat{B}_0 =  \pi_1(M(\mathcal{A})/N_0)$. 

We introduce $B^{\sharp} = \pi_1(M(\mathcal{A}^!)/W)$. It is a normal subgroup of $B^! = \pi_1(M(\mathcal{A}^!)/W^!)$, and there
is a natural projection map $B^{\sharp} \onto B$ with kernel $\Ker(M(\mathcal{A}^!) \onto M(\mathcal{A}))$.

 In particular we
have $B^{\sharp} = B$ when $\mathcal{A}^! = \mathcal{A}$, that is, as soon as $d > 1$. We denote by $\pi^{\sharp} : B^{\sharp} \onto W$
the natural projection map.

We denote by $p_W : W^! \to C = W^!/W$ the canonical projection and set $p_B = p_W \circ \pi^!$. We have $B^{\sharp} = \Ker p_B$ (see \cite{BMR} \S 3.B for a detailed description of the maps involved here and their properties). This yields the following commutative diagram: 
\begin{eqnarray}\label{diag:link}
\xymatrix{
 & 1 \ar[d] & 1 \ar[d] \\
 & P^! \ar[d] \ar@{=}[r] & P^! \ar[d] \\
 1 \ar[r] & B^{\sharp} \ar[d]_{\pi^{\sharp}} \ar[r] & B^! \ar[d]_{\pi^!} \ar[r]^{p_B} & C \ar@{=}[d] \ar[r] & 1 \\
 1 \ar[r] & W \ar[d] \ar[r] & W^! \ar[d] \ar[r]^{p_W} & C  \ar[r] & 1 \\
 & 1 & 1
} 
\end{eqnarray}

The usefulness of introducing this 'fake braid group' $B^{\sharp}$ is that it can be used
in general, regardless of the value of $d$, to define $\widetilde{B}_0$
as a
quotient of $\hat{B}_0^{\sharp} := (\pi^{\sharp})^{-1}(N_0)$. Recall
that $\widetilde{B}_0 = \hat{B}_0/\QQ0$ where $\QQ0$
is the kernel of the natural map $P = \pi_1(M(\mathcal{A})) \onto \pi_1(M(\mathcal{E}))$. Let us denote $q : P^! \onto P$ the projection
map orginating from the inclusion $M(\mathcal{A}^!) \subset M(\mathcal{A})$, and set $\QQ0^{\sharp} = q^{-1}(\QQ0)$. It is the
kernel of the projection map $P^! = \pi_1(M(\mathcal{A}^!)) \onto \pi_1(M(\mathcal{E}))$. 
We have the following commutative diagram
of short exact sequences
$$
\xymatrix{
1 \ar[r] & P^! \ar[r] \ar@{->>}[d]_q & \hat{B}_0^{\sharp} \ar[d] \ar[r] & N_0 \ar[r] \ar@{=}[d] & 1 \\
1 \ar[r] & P \ar[r] & \hat{B}_0 \ar[r] & N_0 \ar[r] & 1 
}$$
which implies that the middle map is also surjective, and that its
composite with the canonical projection $\hat{B}_0 \onto \widetilde{B}_0$
has kernel $q^{-1}(\QQ0) = \QQ0^{\sharp}$. This identifies $\widetilde{B}_0$ with $\hat{B}_0^{\sharp}/\QQ0^{\sharp}$.

From this new description we easily get the following slight
generalization of Lemma~\ref{lem:conjug} in the specific case of parabolic subgroups of $W=G(de,e,n)$:

\begin{lemma}\label{lem:conjug_big}
Let $W_1,W_2$ be two parabolic subgroups of $W=G(de,e,n)$,  which are conjugate in $W^!=G(de,1,n)$, let $G_i =N_W(W_i)$, and
$\hat{B}_i, \widetilde{B}_i, B_i,\widetilde{H}_i$ as in Lemma \ref{lem:conjug}.
Then there exists a group isomorphism 
$\widetilde{B}_1 \to \widetilde{B}_2$
mapping $B_1$ to $B_2$ and an algebra isomorphism $\widetilde{H}_1 \to \widetilde{H}_2$ mapping $H_1$ to $H_2$. 
\end{lemma}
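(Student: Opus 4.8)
The plan is to follow the proof of Lemma~\ref{lem:conjug} in spirit, the one genuine difference being that the conjugating element now lies in $W^!$ and not in $W$, so it can no longer be lifted to $B$; instead I would lift it to $B^!$ and transport the resulting conjugation to the groups $\widetilde{B}_i$ through the identification $\widetilde{B}_i \simeq \hat{B}_i^{\sharp}/\QQ0^{\sharp,i}$ obtained above. Write $L_i$ for the flat whose pointwise stabilizer in $W$ is $W_i$, so that $L_i = \bigcap \mathcal{E}_i$, and choose $w \in W^!$ with $W_2 = w W_1 w^{-1}$ together with a lift $b \in (\pi^!)^{-1}(\{ w \}) \subseteq B^!$. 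Since $W \triangleleft W^!$, the element $w$ normalizes $W$ and therefore permutes both $\mathcal{A}$ and $\mathcal{A}^!$; moreover the pointwise stabilizer in $W$ of $w(L_1)$ is $w W_1 w^{-1} = W_2$, so $L_2 = w(L_1)$, whence $w(\mathcal{E}_1) = \mathcal{E}_2$ and, by Lemma~\ref{link_normalisateurs} together with $w W w^{-1} = W$, we get $N_2 = N_W(W_2) = w N_W(W_1) w^{-1} = w N_1 w^{-1}$.

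Now set $c_b : x \mapsto b x b^{-1}$. As $B^{\sharp} = \Ker p_B$ is normal in $B^!$, the automorphism $c_b$ preserves $B^{\sharp}$, and diagram~(\ref{diag:link}) gives $\pi^{\sharp}(c_b(x)) = w\,\pi^{\sharp}(x)\,w^{-1}$ for every $x \in B^{\sharp}$; hence $c_b(\hat{B}_1^{\sharp}) = (\pi^{\sharp})^{-1}(w N_1 w^{-1}) = (\pi^{\sharp})^{-1}(N_2) = \hat{B}_2^{\sharp}$. Likewise $c_b$ preserves $P^!$ and realizes a bijection of meridians sending the meridian around a hyperplane $L \in \mathcal{A}^!$ to one around $w(L)$; since $\QQ0^{\sharp,i}$ is the kernel of $P^! \onto \pi_1(M(\mathcal{E}_i))$, and these kernels are generated by the meridians around the hyperplanes of $\mathcal{A}^! \setminus \mathcal{E}_i$, the identity $w(\mathcal{A}^! \setminus \mathcal{E}_1) = \mathcal{A}^! \setminus \mathcal{E}_2$ forces $c_b(\QQ0^{\sharp,1}) = \QQ0^{\sharp,2}$. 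Passing to the quotients yields the group isomorphism $\widetilde{B}_1 \to \widetilde{B}_2$. Exactly as in Lemma~\ref{lem:conjug}, fitting $c_b$ into a commutative square over $x \mapsto w x w^{-1} : N_1 \to N_2$ (which carries $W_1$ onto $W_2$, hence $N_1/W_1$ onto $N_2/W_2$) shows that this isomorphism maps $B_1 = \Ker(\widetilde{B}_1 \onto N_1/W_1)$ onto $B_2$.

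For the Hecke algebras, $c_b$ sends a braided reflection around a reflecting hyperplane $L$ of $W_1$ to one around $w(L)$, a reflecting hyperplane of $W_2$, and so it ought to carry the defining ideal of $\widetilde{H}_1$ inside $\KK \widetilde{B}_1$ onto that of $\widetilde{H}_2$, inducing an algebra isomorphism $\widetilde{H}_1 \to \widetilde{H}_2$ that sends $H_1$ (the image of $\KK B_1$) onto $H_2$. I expect this last step to be the only delicate point: since $w \in W^! \setminus W$ in general, the reflections $s$ and $w s w^{-1}$ of $W$ need not be conjugate \emph{in $W$}, so one must check that they nonetheless receive the same parameter, which is exactly what is needed for the Hecke relations to correspond under $c_b$. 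Everything preceding is bookkeeping around diagram~(\ref{diag:link}) and mirrors Lemma~\ref{lem:conjug}; it is the passage through $B^{\sharp}$ rather than $B$ (which is what lets $c_b$ act on $\widetilde{B}_i$ at all) together with this compatibility of parameters along $w$-conjugation that constitutes the real content of the statement.
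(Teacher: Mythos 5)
Your proposal is correct and follows essentially the same route as the paper's proof: lift $w$ to $b\in B^!$, use diagram~(\ref{diag:link}) to show that conjugation by $b$ carries $\hat{B}^{\sharp}_1$ onto $\hat{B}^{\sharp}_2$ and $\QQ0^{\sharp,1}$ onto $\QQ0^{\sharp,2}$ (the paper phrases the first step via equality of the kernels of $\pi^{\sharp}$ and $\pi^!$ rather than normality of $B^{\sharp}$, but this is the same argument), then descend to $\widetilde{B}_1\to\widetilde{B}_2$ and conclude exactly as in Lemma~\ref{lem:conjug}. The one point you leave hedged---that $s$ and $wsw^{-1}$ receive the same parameter---is not an obstruction but a bookkeeping convention: since $N_2=wN_1w^{-1}$, conjugation by $w$ maps $N_1$-orbits of distinguished reflections of $W_1$ bijectively to $N_2$-orbits for $W_2$, and the lemma is read with the parameter rings identified along this bijection, an identification the paper (already in Lemma~\ref{lem:conjug}) leaves implicit.
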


\begin{proof}

We set $B^{\sharp}_i, \hat{B}^{\sharp}_i$ for $i = 1,2$
the groups $B^{\sharp},\hat{B}^{\sharp}$ defined above for $W_0 = W_i$.
We claim that $\hat{B}^{\sharp}_{1}$ and $\hat{B}^{\sharp}_{2}$ are conjugate in $B^!$.

This can be seen as follows: let $w\in W^!$ such that $w W_1 w^{-1}=W_2$. As $W\trianglelefteq W^!$ we have $w G_1 w^{-1}=G_2$. Let $b\in(\pi^!)^{-1}(\{ w\})$. Note that using the diagram~\eqref{diag:link}, we can see each $\hat{B}^{\sharp}_{i}$ inside $B^!$. We claim that $b \hat{B}^{\sharp}_{1} b^{-1}=\hat{B}^{\sharp}_{2}$. Indeed, for $x\in \hat{B}^{\sharp}_{1}$, we have that $\pi^!(b x b^{-1})\in w G_1 w^{-1}=G_2$. By~\eqref{diag:link}, it implies that there is $b'\in \hat{B}^{\sharp}_{2}$ such that $\pi^!(b')=\pi^!(b x b^{-1})$. Since both maps $\pi^{\sharp}$ and $\pi^!$ have the same kernel we have $\Ker\pi^!=P^!\subseteq \hat{B}^{\sharp}_{2}$ and since $b'\in \hat{B}^{\sharp}_{2}$, we get that $bxb^{-1}\in \hat{B}^{\sharp}_{2}$. Hence $b \hat{B}^{\sharp}_{1} b^{-1}\subseteq\hat{B}^{\sharp}_{2}$ and conversely we show that $b^{-1} \hat{B}^{\sharp}_{2} b\subseteq\hat{B}^{\sharp}_{1}$.   

The rest of the proof is then the same as in the proof of Lemma~\ref{lem:conjug}, only replacing $Q_i$ with $Q_i^{\sharp} = \Ker( \hat{B}^{\sharp}_i \onto \widetilde{B}_i)$, and noticing that it is the subgroup of $P^!$
generated by the meridians around the hyperplanes in $\mathcal{A}^!$
which are not reflecting hyperplanes for $W_i$.
\end{proof}

\subsection{Lifting complements}\label{lifting_complements}
From the commutativity of the diagram (\ref{diag:link}) we readily get that
 $\hat{B}^{\sharp}_0 = (\pi^{\sharp})^{-1}(N_0) = \hat{B}_0^! \cap B^{\sharp}$. Let us now consider the subgroup $\QQ0^{\sharp}$ of $P^!$, which is generated by
 the meridians around the hyperplanes which are not in $\mathcal{E}$.
 We have 
 $\widetilde{B}_0 = \hat{B}^{\sharp}_0 /\QQ0^{\sharp}$ and $\widetilde{B}_0^! = \hat{B}_0^!/\QQ0^!$. Let us denote $\hat{p}_B$ the
 restriction of $p_B$ to $\hat{B}_0^!$.
 Since $\QQ0^! \subset \Ker \hat{p}_B$, it induces a morphism $\widetilde{p}_B : \widetilde{B}_0^! \to C$
 whose kernel is exactly $\widetilde{B}_0^{\sharp} := \hat{B}_0^{\sharp}/\QQ0^!$, which projects onto $\widetilde{B}_0$ with kernel $\QQ0^{\sharp}/\QQ0^!$.

Let $U_0^!$ be a complement of $W_0^!$ inside $N_0^!$. We make the following assumption
\begin{eqnarray}\label{assumption_1}
U_0: = U_0^! \cap W\text{ is complementary to }W_0\text{ inside }N_0.
\end{eqnarray}
This assumption does \textit{not} hold in general, but one can always choose $U_0^!$ so that it works,
as we will see below.

Denote by $\widetilde{\pi}^! : \widetilde{B}_0^! \to W^!$, $\widetilde{\pi}^{\sharp} : \widetilde{B}_0^{\sharp} \to W$, $\widetilde{\pi} : \widetilde{B}_0 \to W$ the morphisms induced by $\pi^!$, 
$\pi^{\sharp}$ and $\pi$, respectively.
 Note that $\widetilde{p}_B=p_W\circ \widetilde{\pi}^!$. If we have a morphism $\psi^! : U_0^! \to \widetilde{B}^!_0$ satisfying the following assumption
\begin{eqnarray}\label{assumption_2}
\widetilde{\pi}^! \circ \psi^! = \Id_{U_0^!},
\end{eqnarray} 
then we also have $\widetilde{p}_B \circ \psi^! = (p_W)_{U_0^!}$. This implies that, under assumptions~(\ref{assumption_1}) and (\ref{assumption_2}), the map $\psi^!$ restricts to a morphism $\psi : U_0 \to \widetilde{B}_0^{\sharp} \subset \widetilde{B}_0^!$. Indeed, for $g \in U_0^!$, we have  $$\psi^!(g) \in  \widetilde{B}_0^{\sharp}~\Leftrightarrow~\widetilde{p}_B(\psi^!(g)) = 1~\Leftrightarrow~ p_W(g) = 1,$$
and the last condition is always fulfilled if $g\in U_0=U_0^! \cap W$. Finally, $\widetilde{\pi}^! \circ \psi^! = \Id_{U_0^!}$ immediately implies
$\widetilde{\pi}^{\sharp}\circ \psi = \Id_{U_0}$, so that $\psi$ indeed provides a convenient lift into $\widetilde{B}_0^{\sharp}$.

The complement $U_0^!$ 
that we chose is itself a complex reflection group,
with irreducible components belonging to the general series. 
Its braid group $\Gamma^!$ is an Artin group with irreducible components of type $B$,
with a projection map $\gamma^! : \Gamma^! \onto U_0^!$ (see Remark~\ref{rmq_braid_complement}). Let
$\Gamma = \{ g \in \Gamma^! \ ; \ \gamma^!(g) \in W \}$ and $\gamma$ the restriction of $\gamma^!$
to $\Gamma$.
Under assumption (\ref{assumption_1}),  $\gamma$ is a map $\Gamma \onto U_0$ and $\Ker(\Gamma^! \onto U_0^!)
= \Ker(\Gamma \onto U_0)$.

 While it is not possible,
in general, to obtain a lifting morphism $U_0^! \to \widetilde{B}^!_0$ as above, we were able in the cases satisfying this assumption to construct
morphisms $\psi^! : \Gamma^! \to \widetilde{B}_0^!$ such that 
$\widetilde{\pi}^! \circ \psi^! = \gamma^!$.

The restriction of $\psi^!$ to $\Gamma$ then provides a morphism $\psi : \Gamma \to  \widetilde{B}_0^{\sharp}$ such
that $\widetilde{\pi}^{\sharp} \circ \psi = \gamma$.

\subsection{Case where $W_0 = W_0^!$}

In some cases we can directly relate the algebras $\widetilde{H}_0^!$ and $\widetilde{H}_0$ for arbitrary
parameters. Recall that both $N_0^!$ and $N_0$
act on $\mathcal{E}$. In order to get comparable parameters
on both sides we need the following assumption
\begin{eqnarray}\label{assumption_3}
\mbox{The orbits of $\mathcal{E}$ under $N_0$ and $N_0^!$ are the same.}
\end{eqnarray} 

This condition will be satisfied in the cases in which we are interested.
Under this condition, one has a natural morphism from $\widetilde{H}_0$
to some specialization of $\widetilde{H}_0^!$. Indeed, recall that
$\widetilde{H}_0$ and $\widetilde{H}_0^!$ are both defined using parameters
$u_{s,i}, s \in \mathcal{R}_0^*, 0 \leq i < o(s)$, but with the additional
condition that $u_{s,i} = u_{wsw^{-1},i}$ for all $w \in N_0=N_W(W_0)$ in the case of $\widetilde{H}_0$,
 for all $w \in N_{W^!}(W_0)$ in the case of $\widetilde{H}_0^!$. Since the conjugation
 action of the normalizers on $\mathcal{R}_0^*$ is the same as their action on $\mathcal{E}$,
assumption (\ref{assumption_3}) says that the conditions on the parameters are the
same and thus the algebras are defined over the same
ring. 

Then, the composition
of $\KK \widetilde{B}_0^{\sharp} \to \KK \widetilde{B}_0^!$ and $\KK \widetilde{B}_0^! \to \widetilde{H}_0^!$
factorizes through $\KK \widetilde{B}_0^{\sharp} \to \widetilde{H}_0$,
as the Hecke relations are obviously mapped to $0$ inside $\widetilde{H}_0$,
and the possibly additional meridians around $\mathcal{A}^! \setminus \mathcal{A}$ are mapped to $1$. From this we get
 an
algebra morphism $\widetilde{H}_0 \to \widetilde{H}_0^!$.

The following lemma will be useful in the special case where $W_0 = W_0^!$.

\begin{lemma} \label{lem:specialW0W0} When $W_0= W_0^!$, the 
algebra morphism $\widetilde{H}_0 \to \widetilde{H}_0^!$ is injective.
\end{lemma}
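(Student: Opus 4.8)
The plan is to exploit the two direct sum decompositions furnished by Theorem~\ref{theo:freenessnorm}, namely $\widetilde{H}_0 = \bigoplus_{g \in \overline{N_0}} [gH_0]$ and $\widetilde{H}_0^! = \bigoplus_{g \in \overline{N_0^!}} [gH_0^!]$, and to show that the morphism $\phi : \widetilde{H}_0 \to \widetilde{H}_0^!$ respects them, carrying the $g$-summand of the source into the $g$-summand of the target. Once this grading compatibility is in place, injectivity becomes formal: given $x = \sum_{g \in \overline{N_0}} x_g$ with $x_g \in [gH_0]$ and $\phi(x) = 0$, the directness of the decomposition of $\widetilde{H}_0^!$ forces $\phi(x_g) = 0$ for each $g$, and it then suffices to know that $\phi$ restricted to each $[gH_0]$ is injective.

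First I would record the structural simplifications coming from the hypothesis $W_0 = W_0^!$. Since $W_0$ and $W_0^!$ then have the same reflections, they share the same collection of reflecting hyperplanes, so $\mathcal{E} = \mathcal{E}^!$ and hence $\QQ0^{\sharp} = \QQ0^!$. Consequently $\widetilde{B}_0 = \widetilde{B}_0^{\sharp}$ sits inside $\widetilde{B}_0^!$ as the kernel of $\widetilde{p}_B$, and on the common braid group $B_0 = B_0^!$ this inclusion is the identity. This yields $H_0 = H_0^!$, with $\phi$ restricting to the identity on $H_0$; in particular $\phi$ is right $H_0$-linear. Moreover, the inclusion $N_0 \subseteq N_0^!$ of Lemma~\ref{link_normalisateurs} induces an \emph{injection} $\overline{N_0} = N_0/W_0 \hookrightarrow N_0^!/W_0 = \overline{N_0^!}$, its kernel being trivial since $N_0 \cap W_0 = W_0$.

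Next I would check the grading compatibility. For $g \in \overline{N_0}$, write $[gH_0] = bH_0$ for a lift $b \in \widetilde{B}_0$ of $g$. Viewing $b$ inside $\widetilde{B}_0^!$ through the inclusion above, its image $\bar{b} = \phi(b)$ is a lift of the image of $g$ in $\overline{N_0^!}$, so $\phi([gH_0]) = \bar{b}\,\phi(H_0) = \bar{b}H_0^! = [gH_0^!]$. Because $b$ is a group element it is invertible in $\widetilde{H}_0$, and likewise $\bar{b}$ is invertible in $\widetilde{H}_0^!$; together with $\phi|_{H_0} = \mathrm{id}$ this shows that $\phi$ carries the free rank-one $H_0$-module $bH_0$ isomorphically onto the free rank-one $H_0^!$-module $\bar{b}H_0^!$, via $bh \mapsto \bar{b}h$. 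In particular $\phi|_{[gH_0]}$ is injective, and the various summands land in pairwise distinct summands of $\widetilde{H}_0^!$ because $\overline{N_0} \hookrightarrow \overline{N_0^!}$. Assembling these facts through the direct sum argument of the first paragraph gives injectivity of $\phi$.

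The argument is essentially formal once the setup is right; the only point demanding care is the compatibility of the two decompositions, that is, verifying that $\phi$ is genuinely graded by $\overline{N_0} \hookrightarrow \overline{N_0^!}$ and restricts to an isomorphism $H_0 \to H_0^!$. This hinges on the identifications $\mathcal{E} = \mathcal{E}^!$ and $\QQ0^{\sharp} = \QQ0^!$ forced by $W_0 = W_0^!$, which I would isolate at the very start; the invertibility of the group-element lifts $b$ and $\bar{b}$ then does the rest.
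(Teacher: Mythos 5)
Your proposal is correct and follows essentially the same route as the paper: your injection $\overline{N_0} \hookrightarrow \overline{N_0^!}$ is the content of the paper's first step (injectivity of $\widetilde{B}^{\sharp}_0/B_0 \to \widetilde{B}_0^!/B_0$, giving $\widetilde{B}_0^{\sharp}/B_0 \simeq N_0/W_0$), and your graded comparison of the two decompositions from Theorem~\ref{theo:freenessnorm} is exactly what the paper encodes by choosing coset representatives $E^{\sharp} \subset E^!$ and sandwiching $\widetilde{H}_0 \to \widetilde{H}_0^!$ between the isomorphisms $\bigoplus_{b \in E^{\sharp}} H_0 b \simeq \widetilde{H}_0$ and $\widetilde{H}_0^! \simeq \bigoplus_{b \in E^!} H_0 b$, so that the composite is the (injective) inclusion of direct summands. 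The only cosmetic difference is that you argue summand-by-summand using invertibility of the group-element lifts, while the paper concludes in one stroke from the injectivity of the composite map.
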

\begin{proof} We first notice that the projection maps $\widetilde{B}_0^! \to N_0^!$  and
$\widetilde{B}_0^{\sharp} \to N_0$ 
induce isomorphisms 
$\widetilde{B}_0^!/B_0 \simeq N_0^!/W_0^! = N_0^!/W_0$
and a clearly surjective morphism
$\widetilde{B}_0^{\sharp}/B_0 \to N_0/W_0$. This morphism is actually an isomorphism,
as its kernel coincides with the kernel of the natural morphism $\widetilde{B}^{\sharp}_0/B_0 \to \widetilde{B}_0^!/B_0$,
which is injective as it is induced by the inclusion map $\hat{B}_0^{\sharp} \subset \hat{B}_0^!$.

We choose sets of representatives
 $E^{\sharp} \subset \widetilde{B}_0^{\sharp}$
and $E^{!} \subset \widetilde{B}_0^{!}$ such that $E^{\sharp} \subset E^!$. 
Then
we have morphisms of $\KK B_0$-modules
$$\bigoplus_{b \in E^{\sharp}} (\KK B_0) b \simeq \KK \widetilde{B}_0^{\sharp}  
\to \KK \widetilde{B}_0^! \simeq \bigoplus_{b \in E^!} (\KK B_0) b  
$$
where $\KK \widetilde{B}_0^{\sharp}  
\to \KK \widetilde{B}_0^!$ is the inclusion map, and the composition is
the identity on $E^{\sharp} \subset E^!$. Dividing out by the
defining ideals of $\widetilde{H}_0$ and $\widetilde{H}_0^!$ then
yields a sequence of morphisms
$$\bigoplus_{b \in E^{\sharp}} H_0 b \to \widetilde{H}_0  
\to \widetilde{H}_0^! \to \bigoplus_{b \in E^!} H_0 b  
$$
whose composite is injective. Now, the
morphisms $\bigoplus_{b \in E^{\sharp}} H_0 b \to \widetilde{H}_0$
and $\bigoplus_{b \in E^{!}} H_0 b \to \widetilde{H}_0^!$
are isomorphisms (see \cite{YH2} section 2.3.1), whence the natural morphism
$\widetilde{H}_0 \to \widetilde{H}_0^!$ is also injective.

\end{proof}

Notice that in this case, the natural morphism $\widetilde{B}_0^{\sharp} \to \widetilde{B}_0$
is actually an isomorphism.

\subsection{Lifting complements for standard parabolic subgroups}

Since the reflecting hyperplanes for $W$ are reflecting hyperplanes
for $W^!$, every parabolic subgroup $W_0$ of $W$ is a subgroup of a uniquely defined parabolic subgroup $W_0^!$
of $W^!$ of the same
rank. Now, 
by~\cite[Theorem 3.9]{TAYLORREFL}, 
every parabolic subgroup $W_0^!$ of $W^!$ is a conjugate inside $W^!$ of a standard
one. Hence by Lemma~\ref{lem:conjug_big}, we can assume that $W_0^!$ itself is standard. In this case $W_0$ is also standard. 

Recall from Section~\ref{sec:gr1n} that such a standard parabolic subgroup is determined by a pair $(n_0, \lambda)$, where $0\leq n_0\leq n$, and $\lambda$ is a partition of $n-n_0$. We will keep the notation introduced in Section~\ref{sec:gr1n}, except that all the groups related to $W^!$ will have the symbol $!$ as exponent. The pair $(n_0,\lambda)$ will be referred to as the \textit{type} of $W_0$. If $W_0$ has type $(n_0, \lambda)$, then 
$$W_0=G(de, e, n_0)\times\prod_{k=1}^{n} G(1,1,k)^{b_k},
\ \ W_0^!=G(de, 1, n_0)\times\prod_{k=1}^{n} G(1,1,k)^{b_k}.
$$
\subsubsection{Case where $n_0=0$} In this case,
we have $W_0=W_0^!$, and $U_0=W\cap U_0^!$ is a complement to $W_0$ inside $N_0$. The complement $U_0^!$ is a complex reflection group of type
$\prod_{k, b_k \neq 0 } G(de,1,b_k)$ with braid group an Artin group $\Gamma^!$
of type $\prod_{k, b_k \neq 0}  B_{b_k}$ (with the convention that $B_1 = A_1$ among Coxeter types).
The explicit description of $U_0$ is less straightforward than in
the other cases (see~\cite[Theorem 3.12]{TAYLORNORM}), however assumption (\ref{assumption_1}) is satisfied
and we constructed in Section~\ref{sec:gr1n} a morphism $\psi^! : \Gamma^! \to \widetilde{B}_0^!$ satisfying assumption (\ref{assumption_2}). Therefore we can apply the results of Section~\ref{lifting_complements} and by restriction we get a morphism
$\psi : \Gamma \to \widetilde{B}_0^{\sharp}$ such that $\widetilde{\pi}^{\sharp} \circ \psi = \gamma$. Since assumption (\ref{assumption_3}) is also satisfied,
the natural compositions towards the
Hecke algebras fit into the following commutative diagram (see the paragraph before Lemma~\ref{lem:specialW0W0}): 
$$
\xymatrix{
 \Gamma \ar[r] \ar[d] & \KK \widetilde{B}_0^{\sharp}\ar[d] \ar[r] & \widetilde{H}_0 \ar[d] \\
 \Gamma^! \ar[r] & \KK \widetilde{B}_0^! \ar[r] & \widetilde{H}_0^! \\
}
$$

Now by twisting $\psi^!$, we obtained in Section~\ref{sec:gr1n} (see the end of Subsection~\ref{sect:orderrelsGr1n}) a group morphism $(\psi')^!: \overline{N_0^!}\to (\widetilde{H}_0^!)^\times$ satisfying the assumptions of Lemma~\ref{lem:semidirect}. We claim that the restriction $\psi'$ of this map to $U_0$ provides a group morphism $U_0\cong \overline{N_0} \to \widetilde{H}_0^\times$ also satisfying the assumptions of Lemma~\ref{lem:semidirect}.   

Given $g\in\overline{N_0^!}$, an element $b\in \widetilde{B}_0^!$ as in the statement of Lemma~\ref{lem:semidirect} is (by the construction made in Section~\ref{sec:gr1n}) an element of the form $\psi^!(x)$ for some preimage $x$ of $g$ under $\gamma^!:\Gamma^! \to U_0^!$. In particular, if $g\in \overline{N_0}$, then $x$ lies in $\Gamma$ and $b=\psi^!(x)=\psi(x)\in \widetilde{B}_0^{\sharp}$. By the above commutative diagram and the fact that the map $\widetilde{H}_0 \to \widetilde{H}_0^!$ is injective, this implies that $b H_0\subseteq \widetilde{H}_0^!$ lies in fact inside $\widetilde{H}_0^\times$, and thanks to the isomorphism $\widetilde{B}_0^{\sharp} \to \widetilde{B}_0$ the element $b$ can be seen inside $\widetilde{B}_0$. This shows that the restriction of $\psi'$ to $\overline{N_0}$ satisfies the assumptions of Lemma~\ref{lem:semidirect}.    

\subsubsection{Case where $n_0\neq 0$} In this case, a complement $U_0$ to $W_0$ is obtained as the direct product of the subgroups $N_k$, where $N_k$ has the same generators as $N_k^!$ except for the first one, which is replaced by $s_0^{-k} s_k^{(0)}\in G(r,r,n)\subseteq W$, $r = de$ (see the proof of Theorem 3.12 of~\cite{TAYLORNORM}).  Note that
each $N_k$ is isomorphic to $N_k^!$ -- and to the complex reflection group $G(r,1,b_k)$ -- so that
$U_0$ is isomorphic to $U_0^!$, and we can also consider it as a quotient of
$\Gamma^!$ by the order relations corresponding to
the generators of $N_k$.

 These generators all have order $2$ except the first one which has order $r$.
In Section~\ref{sec:gr1n}, we described a lifting $\psi^! : \Gamma^! \to  \widetilde{B}_0^!$
such that $\widetilde{\pi}^! \circ \psi^! = \gamma^!$.
The images under $\psi^!$ of the standard generators
of $\Gamma^!$ were denoted $\sigma_k^{(i)}$.

The computations done in Section~\ref{sect:orderrelsGr1n} prove that, by replacing 
each $\sigma_k^{(0)}$
by 
$\sigma_0^{-k} c_k \sigma_k^{(0)}$
where $c_k = T_{0,k}(\sigma_0^e)T_k(\Delta_{J_{k,1}})\in \KK \widetilde{B}_0^!$ for some
polynomials $T_{0,k},  T_k \in \KK[X]$, we get another group
morphism $\psi' : \Gamma^! \to \KK \widetilde{B}_0^!$.

Moreover, notice that the image of each $c_k$
under the projection map $\KK \widetilde{B}_0^! \to \KK N_0^{!}$ lies inside $\KK N_0$. It follows that
the generators of $\Gamma^!$
have their image under $\psi'$ inside $\KK \widetilde{B}_0^{\sharp}$ and that $\psi'$
is actually a morphism $\Gamma^! \to \KK \widetilde{B}_0^{\sharp}$.

It remains to prove that we can choose the polynomials $T_{0,k}$ and  $T_k $ (for all $k$) so that the composition of the above morphism
with $\KK \widetilde{B}_0^{\sharp} \to \widetilde{H}_0$ factorizes
through some $\psi : U_0 \to \widetilde{H}_0$, making the following diagram commute
$$
\xymatrix{
\Gamma^! \ar[r]\ar[d] & \KK \widetilde{B}_0^{\sharp}\ar[d] \\
U_0 \ar@{.>}[r]  & \widetilde{H}_0 
}
$$
Indeed, the other conditions for applying Lemma \ref{lem:semidirect} are obviously satisfied
by such a $U_0 \to \widetilde{H}_0$, as each $c_k$ belongs to $H_0$ and we have a factorization $\KK \widetilde{B}_0^{\sharp} \to \KK \widetilde{B}_0 \to \widetilde{H}_0$.

To this end, as the braid relations are still satisfied by the modified generators, as well as the order relations except possibly for the $\sigma_0^{-k} c_k \sigma_k^{(0)}$, the only missing condition is the order relation $1 = (\sigma_0^{-k} c_k \sigma_k^{(0)})^{de}$. In other words, we need to have the equality 
$$1 = (\sigma_0^{-k} c_k \sigma_k^{(0)})^{de} = \sigma_0^{-k de}c_k^{de} (\sigma_k^{(0)})^{de}$$ inside $\widetilde{H}_0$. By Lemma~\ref{lem:power_d_garside}, this is
equal to 
$\sigma_0^{-k de}c_k^{de} \Delta_{J_k^1}^2$. Hence we need to have $c_k^{de} = \sigma_0^{k de} \Delta_{J_k^1}^{-2}$ inside $H_0 \subseteq  \widetilde{H}_0$.
Therefore, it is sufficient to have polynomials $T_{0,k},  T_k \in \KK[X]$ such that
$T_{0,k}(\sigma_0^e)^{de} =  \sigma_0^{k de} = (\sigma_0^e)^{kd}$ and $T_k(\Delta_{J_{k,1}})^{de} = \Delta_{J_k^1}^{-2}$ inside $\widetilde{H}_0$,
and this provides the conditions of Theorem \ref{theo:introGdeen}.

\begin{remark}

In the particular case where $n_0\neq 0$ and $\la =1^{n-n_0}$, we have that $\widetilde{B}_0^!$ is the semidirect product of $B_0^!$ with $U_0^!$, as shown in~\cite[Proposition 5.1]{NORMARTIN} (the product is even direct in that case). In this case, the only $k$ for which $b_k\neq 0$ is $1$, and in this case $\Delta_{J_k^1}=1$ as it is the Garside element in a braid group on one strand (note that Lemma~\ref{lem:power_d_garside} applies to this case, and provides the order relation on $\sigma_1^{(0)}$ which allows the splitting of the sequence (\ref{ses})). Hence in this case, we only need to find one polynomial $T_{0,1}$ such that $T_{0,1}(\sigma_0^e)^{de}=(\sigma_0^e)^d$, as the constant polynomial $T_1=1$ satisfies $T_1(\Delta_{J_1^1})^{de}=\Delta_{J_1^1}^{-2}$. 

\end{remark}

\section{Parabolic subgroups of maximal rank in exceptional types}

We assume here that the parabolic subgroup $W_0$ of the exceptional (irreducible) reflection group $W$
has rank $\rk W - 1$. Using the Shephard-Todd notation, we prove the following.
\begin{theorem} \label{theo:maxipars} Let $W$ be an irreducible complex reflection group of exceptional type,
and $W_0$ a parabolic subgroup of maximal rank. Let $z_{B_0}$ be the canonical
positive central element of $B_0$. Assume that the pair $(W,W_0)$ is not of type $(G_{33},D_4)$
or $(G_{25},(\mathbb{Z}/3\mathbb{Z})^2)$. If there exists $T \in \KK[X]$ such that $T(z_{B_0})^{|Z(W)|} = z_{B_0}^{-|Z(W_0)|}$
inside $H_0$, then $\widetilde{H}_0 \simeq \overline{N_0} \ltimes H_0$. In the two exceptional cases, the same conclusion
holds with the condition replaced by $T(z_{B_0})^3 = z_{B_0}^{-1}$.

\end{theorem}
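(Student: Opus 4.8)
The plan is to apply Lemma~\ref{lem:semidirect}: it suffices to produce a group homomorphism $\psi:\overline{N_0}\to\widetilde{H}_0^{\times}$ whose value on each $g\in\overline{N_0}$ lies in the summand $[gH_0]$ of Theorem~\ref{theo:freenessnorm}. The first point is that $\overline{N_0}$ is cyclic. Indeed, since $W_0$ has corank $1$ its fixed space $L_0$ is a line, and the pointwise stabiliser of $L_0$ inside $N_0$ is exactly $W_0$; hence $N_0$ acts on $L_0\cong\C$ through a character $N_0\to\C^{\times}$ with kernel $W_0$, realising $\overline{N_0}$ as a finite, hence cyclic, subgroup of $\C^{\times}$. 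Write $c=|\overline{N_0}|$ and fix a generator $g$; constructing $\psi$ then reduces to exhibiting a single $\psi(g)\in\widetilde{H}_0^{\times}$ with $\psi(g)^c=1$ and $\psi(g)\in[gH_0]$, after which $\psi(g^i):=\psi(g)^i$ does the job.

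In the non-exceptional cases the natural map $Z(W)\to\overline{N_0}$ is an isomorphism, which I would verify case-by-case using the Shephard--Todd classification. Granting this, $c=|Z(W)|$ and, because $\pi(z_B)=\zeta_{|Z(W)|}\Id$ acts on $L_0$ by the primitive root $\zeta_{|Z(W)|}$, the image of the central element $z_B$ in $\widetilde{B}_0$ (note $z_B\in\hat{B}_0$, as $\zeta_{|Z(W)|}\Id\in N_0$) projects to a generator of $\overline{N_0}$. I therefore take $b=z_B$, which is central in $\widetilde{H}_0$. The key identity is then
$$
b^{\,c}=z_B^{\,|Z(W)|}=z_{B_0}^{\,|Z(W_0)|}\quad\text{inside }\widetilde{H}_0 .
$$
To see it, note that $z_B^{|Z(W)|}$ is the class of the full loop $t\mapsto\exp(2\pi\ii t).\ast$, i.e.\ the full twist of $W$; passing from $\hat{B}_0$ to $\widetilde{B}_0$ kills the meridians $\sigma^{m_L}$ around the hyperplanes $L\in\mathcal{A}\setminus\mathcal{A}_0$, which amounts geometrically to filling in those hyperplanes, so this full loop is sent to the full twist of $W_0$; and the full twist of $W_0$ equals $z_{B_0}^{|Z(W_0)|}$ (for $W_0$ irreducible this is a standard property of $z_{B_0}$, and it persists when the irreducible factors of $W_0$ have trivial centre).

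With this in hand the polynomial $T$ does the rest. Set $\psi(g)=z_B\,T(z_{B_0})$. Since $z_B$ is central it commutes with $T(z_{B_0})$, the hypothesis $T(z_{B_0})^{|Z(W)|}=z_{B_0}^{-|Z(W_0)|}$ forces $T(z_{B_0})$ to be invertible (its $|Z(W)|$-th power is the unit $z_{B_0}^{-|Z(W_0)|}$), and
$$
\psi(g)^{c}=z_B^{\,|Z(W)|}\,T(z_{B_0})^{|Z(W)|}=z_{B_0}^{\,|Z(W_0)|}\,z_{B_0}^{-|Z(W_0)|}=1 .
$$
Hence $\psi$ extends to a homomorphism $\overline{N_0}\to\widetilde{H}_0^{\times}$ with $\psi(g^{i})=z_B^{\,i}\,T(z_{B_0})^{i}\in z_B^{\,i}H_0=[g^iH_0]$, and Lemma~\ref{lem:semidirect} yields $\widetilde{H}_0\simeq\overline{N_0}\ltimes H_0$.

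The two excluded pairs are exactly the places where this clean picture breaks. For $(G_{33},D_4)$ the triality of $D_4$ makes $|\overline{N_0}|=3>2=|Z(W)|$, so $z_B$ fails to generate $\overline{N_0}$; for $(G_{25},(\mathbb{Z}/3\mathbb{Z})^2)$ the subgroup $W_0$ is reducible with non-trivial central factors, so the identity $b^{|Z(W)|}=z_{B_0}^{|Z(W_0)|}$ fails because the full twist of $W_0$ is no longer $z_{B_0}^{|Z(W_0)|}$. In each case I would instead fix an explicit order-$3$ generator of $\overline{N_0}$, choose a geometric lift $b\in\widetilde{B}_0$, and compute $b^{3}$ by hand; this yields $b^{3}=z_{B_0}$ and hence the modified requirement $T(z_{B_0})^{3}=z_{B_0}^{-1}$, the remainder of the argument being identical. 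The real work---and the main obstacle---is therefore not the construction of $\psi$ but the two classification-dependent inputs feeding it: checking over all exceptional groups and their maximal parabolics that $Z(W)\to\overline{N_0}$ is an isomorphism and that the full twist of $W_0$ is $z_{B_0}^{|Z(W_0)|}$ away from the two listed pairs, and performing the explicit computation of $b^{3}$ in those two pairs.
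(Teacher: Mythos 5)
Your generic-case construction is essentially the paper's own: when $N_0 = W_0 \times Z(W)$ one takes $b = z_B$, uses the full-twist identity $z_B^{|Z(W)|} \equiv z_{B_0}^{|Z(W_0)|} \bmod \QQ0$, and twists by $T(z_{B_0})$ to obtain an element of order $|Z(W)|$ to which Lemma~\ref{lem:semidirect} applies (your remark that $\overline{N_0}$ embeds into $\GL(L_0)\simeq\C^{\times}$, hence is cyclic, is a nice supplement). The genuine gap is the classification claim on which everything rests: it is \emph{not} true that $Z(W)\to\overline{N_0}$ is an isomorphism for every pair other than $(G_{33},D_4)$ and $(G_{25},(\Z/3\Z)^2)$. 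By the result of \cite{TAYLORNORM} invoked in the paper, $N_0 = W_0\times Z(W)$ also fails for $W_0$ of rank one in $G_{13}$ and in $G_{15}$ (there $Z(W).W_0$ has index $2$ in $N_0$, so $|\overline{N_0}| = 2|Z(W)|$), and for $W = G_{35} = E_6$, where $Z(W)=1$ while $\overline{N_0}\neq 1$ for instance for $W_0$ of type $A_2\times A_2\times A_1$ (the longest element normalizes it and swaps the two $A_2$ factors). None of these pairs is excluded in the statement, the image of $z_B$ does not generate $\overline{N_0}$ there, and your proposal offers no fallback. The paper treats them separately: $E_6$ via the real case (Theorem~\ref{theo:liftreal}), and $G_{13}$, $G_{15}$ via explicit computations in their braid groups (embedded in Artin groups of type $I_2(6)$ and $B_2$), producing generators of $\overline{N_0}$ whose $8$th, resp.\ $24$th, powers reduce modulo $\QQ0$ to $\sigma^6$, resp.\ $\sigma_3^{14}$; the resulting conditions $T(\sigma)^8=\sigma^{-6}$ and $T(\sigma_3)^{24}=\sigma_3^{-14}$ are not the hypothesis of the statement and are instead supplied by Lemma~\ref{lem:polfield}.

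There are secondary inaccuracies as well. In both excluded cases $\overline{N_0}$ is cyclic of order $6$, not $3$, and the identities one actually proves are $b^6\equiv z_{B_0}^3$ for $(G_{25},(\Z/3\Z)^2)$ (leading to the condition $T(z_{B_0})^2=z_{B_0}^{-1}$ in the paper's Section~\ref{sect:G25wC3C3}) and $c_1^6\equiv z_{B_0}^2$ for $(G_{33},D_4)$ (leading to $T(z_{B_0})^3=z_{B_0}^{-1}$); your claimed ``$b^3=z_{B_0}$'' is not what a lift satisfies, and in the $G_{33}$ case the verification ($c_1 = t(stvwuvtu)^{-1}$, $z_B^2c_1^6=(txvw)^{12}$) is a nontrivial CHEVIE/GAP computation rather than a hand calculation. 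Finally, your own (correct) criterion for the full-twist identity --- it holds when $W_0$ is irreducible or all its irreducible factors have trivial centre --- is violated by non-excluded reducible pairs such as $(G_{32}, G_4\times\Z/3\Z)$ and $(G_{26}, \Z/2\Z\times\Z/3\Z)$, so even granting your classification claim, your argument for $z_B^{|Z(W)|}\equiv z_{B_0}^{|Z(W_0)|}$ does not cover those pairs; the proposal is internally inconsistent at this point (a subtlety which, admittedly, the paper's one-line assertion that $z_B^{|Z(W)|}z_{B_0}^{-|Z(W_0)|}\in\QQ0$ also elides).
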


In the case of rank $2$, all maximal parabolic subgroups have rank $1$, and $z_{B_0}$
is the braided reflection associated to the unique distinguished reflection inside $W_0$.
By Lemma \ref{lem:polfield} an immediate consequence of the theorem is the following.

\begin{corollary} Assume that $W$ is an irreducible exceptional complex reflection group of rank $2$, and $W_0$ a parabolic
subgroup of rank $1$ and order $m$. Then $H_0$ has parameters $u_0,\dots,u_{m-1} \in \KK^{\times}$.
Assume that $i \neq j \Rightarrow u_i - u_j \in \KK^{\times}$
and that each $u_i$ admits an $r$-th root inside $\KK$, for
$r = |Z(W)|$. Then $\widetilde{H}_0 \simeq \overline{N_0} \ltimes H_0$.
\end{corollary}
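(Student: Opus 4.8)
The plan is to derive the corollary directly from Theorem~\ref{theo:maxipars} by invoking Lemma~\ref{lem:polfield}. First I would note that in rank $2$, a parabolic subgroup $W_0$ of rank $1$ is cyclic of order $m$, so its braid group $B_0$ is the infinite cyclic group generated by a single braided reflection $\sigma$ associated to the unique distinguished reflection in $W_0$. In this situation the canonical central element $z_{B_0}$ coincides with $\sigma$ itself (since $Z(W_0) = W_0$ has order $m$, and the central path winds once around the single reflecting hyperplane). The Hecke algebra $H_0$ is then the quotient of $\KK[\sigma^{\pm 1}]$ by the relation $\prod_{i=0}^{m-1}(\sigma - u_i) = 0$, so that $H_0 \simeq \KK[X]/(\chi)$ where $\chi(X) = \prod_{i=0}^{m-1}(X - u_i)$ is the characteristic polynomial of $z_{B_0} = \sigma$.

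Next I would translate the hypothesis on the parameters into the polynomial condition of the theorem. Setting $r = |Z(W)|$ and noting that $|Z(W_0)| = m$ plays the role of the exponent on the right-hand side, the theorem asks for a polynomial $T \in \KK[X]$ with $T(z_{B_0})^{r} = z_{B_0}^{-m}$ inside $H_0$. Under the identification $H_0 \simeq \KK[X,X^{-1}]/(\chi)$, this is exactly the statement that $T(X)^{r} \equiv X^{-m} \bmod \chi(X)$. Here I would apply Lemma~\ref{lem:polfield} with $a = r$ and $b = -m$: the assumptions $u_i \in \KK^\times$ and $i \neq j \Rightarrow u_i - u_j \in \KK^\times$ say precisely that $\chi$ is split and square-free in the sense of that lemma, so the existence of such a $T$ is equivalent to $\KK$ containing an $r$-th root of each $u_i^{-m}$.

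The remaining point is to check that the arithmetic hypothesis ``each $u_i$ admits an $r$-th root in $\KK$'' supplies such roots. If $w_i \in \KK^\times$ satisfies $w_i^r = u_i$, then $w_i^{-m}$ is an $r$-th root of $u_i^{-m}$, since $(w_i^{-m})^r = (w_i^r)^{-m} = u_i^{-m}$. Thus the hypothesis of Lemma~\ref{lem:polfield} is met for every $i$, the desired polynomial $T$ exists, and Theorem~\ref{theo:maxipars} applies to give $\widetilde{H}_0 \simeq \overline{N_0} \ltimes H_0$. I would also remark that the two exceptional pairs of the theorem are of ranks $3$ and $5$ and so do not arise here, making the rank-$2$ application clean.

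The only genuinely delicate step is the identification $z_{B_0} = \sigma$ and the computation of the exponent $|Z(W_0)| = m$ on the right-hand side; everything else is a routine specialization of the two lemmas. I would double-check that the central element $z_{B_0}$ of the rank-$1$ braid group is indeed the generating braided reflection $\sigma$ (so that its image in $H_0$ has characteristic polynomial $\chi$), and that the normalization of the exponents in Theorem~\ref{theo:maxipars} matches $a = |Z(W)| = r$, $b = -|Z(W_0)| = -m$; once these bookkeeping points are settled, the equivalence furnished by Lemma~\ref{lem:polfield} finishes the argument.
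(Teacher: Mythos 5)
Your proposal is correct and follows exactly the paper's route: the paper derives the corollary from Theorem~\ref{theo:maxipars} by noting that in rank $2$ the element $z_{B_0}$ is the braided reflection generating $B_0$, so that $H_0 \simeq \KK[X]/(\chi)$ with $\chi = \prod_i(X-u_i)$, and then invoking Lemma~\ref{lem:polfield} with $a = |Z(W)| = r$, $b = -|Z(W_0)| = -m$, the $r$-th roots of the $u_i^{-m}$ being supplied by the assumed $r$-th roots of the $u_i$. Your additional checks (that the two exceptional pairs of the theorem occur only in ranks $3$ and $5$, and the explicit identification $z_{B_0} = \sigma$) are exactly the bookkeeping the paper leaves implicit.
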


According to \cite{TAYLORNORM} Theorem~5.5, we have $N_0 = W_0 \times Z(W)$ in almost all cases.
In these cases, we have $z_B, z_{B_0} \in \hat{B}_0$ and 
$z_B^{|Z(W)|} z_{B_0}^{-|Z(W_0)|} \in \QQ0$. Therefore, it
is sufficient to find $T \in \KK[X]$ such that $T(z_{B_0})^{|Z(W)|} = z_{B_0}^{-|Z(W_0)|}$ inside $H_0$,
so that $z_B T(z_{B_0})$ has order $|Z(W)|$.

We now consider the exceptions. There are two exceptions in rank $2$, for $W \in \{ G_{13}, G_{15} \}$,
and the ones in higher rank 
are for $W = G_{35} = E_6$, which is already known by \cite{NORMARTIN} since $W$ is a Coxeter group in this case, and $W \in \{ G_{25}, G_{33} \}$. We deal with these 
 cases now. For the presentations and group-theoretic properties of $B$ we are going to use freely, we refer to \cite{BANNAI} for the groups of rank $2$, to \cite{BMR,BESSISMICHEL,BESSISKPI1} for the other ones.

\subsection{$W = G_{13}$}

When $W$ has type $G_{13}$, a presentation of $W$ (see \cite{BMR}) is given by generators $s,t,u$ and relations 
$tust=ustu$, $stust=ustus$, $s^2 = t^2 = u^2 = 1$. The only case when $N_0 \neq Z(W).W_0$ is when $W_0$ is a conjugate of $\langle s \rangle$. In this case $Z(W).W_0$ has index $2$ inside $N_0$, and $\overline{N_0}$ is cyclic. Moreover $Z(W)$ is cyclic of order $4$ generated by $z = (stu)^3$. A presentation of $B$
is given by generators $\sigma,\tau,\upsilon$ and
relations 
$\tau\upsilon\sigma\tau=\upsilon\sigma\tau\upsilon$, $\sigma\tau\upsilon\sigma\tau=\upsilon\sigma\tau\upsilon\sigma$ and a generator of the center is $z_B = (\sigma \tau \upsilon)^3$. The group $B = B_{13}$ is isomorphic to the Artin group of type $I_2(6)=G_2$ with presentation $\langle a,b \ | \ ababab = bababa\rangle$, an isomorphism being given by $b \mapsto \upsilon$, $ a \mapsto (\upsilon \sigma \tau \upsilon)^{-1}$
 with inverse $\upsilon \mapsto b$, $\tau \mapsto a^{-1}ba$, $\sigma \mapsto \Delta^{-1} a^2$
with $\Delta = ababab$. Since $\Delta$ is central,
we have $a \in \hat{B}_0$. 
One checks that $\pi(a)$ has order $6$ inside $N_0$ and generates $\overline{N_0}$.
Moreover, $\Delta = z_B^{-1}$. In order to lift $\pi(a)$,
we look for polynomials $T \in \KK[X]$ such that $(aT(\sigma))^8 = 1$ inside $H_0$. We have
$$
(aT(\sigma))^8 
= (a^2)^4T(\sigma)^8
= \sigma^4 \Delta^{-4}T(\sigma)^8
= \sigma^4 z_B^{4}T(\sigma)^8
= \sigma^4 \sigma^{2}T(\sigma)^8
= \sigma^6 T(\sigma)^8
$$
since $z_B^4 \sigma^{-2} \in \QQ0$, and we need to find a polynomial $T$ satisfying $T(\sigma)^8 = \sigma^{-6}$ inside $H_0$. For this it is enough to get one such that $T(\sigma)^4 = \sigma^{-3}$. Since $r = |Z(W)| = 4$
this follows from Lemma \ref{lem:polfield}.

\subsection{$W = G_{15}$}

When $W$ has type $G_{15}$, a presentation of $W$ is given by generators 
$s_1,s_2,s_3$ and relations 
$s_1s_2s_3=s_3s_1s_2$,  $s_2s_3s_1s_2s_1=s_3s_1s_2s_1s_2$,
$s_1^2 = s_2^3 = s_3^2 = 1$.
The only case when $N_0 \neq Z(W).W_0$ is when $W_0$ is a conjugate of $\langle s_3 \rangle$. In this case $Z(W).W_0$ has index $2$ inside $N_0$, and $\overline{N_0}$ is cyclic. The braid group has a presentation
with generators braided reflections $\sigma_1,\sigma_2,
\sigma_3$ and relations
$\sigma_1\sigma_2\sigma_3=\sigma_3\sigma_1\sigma_2$, $\sigma_2\sigma_3\sigma_1\sigma_2\sigma_1=\sigma_3\sigma_1\sigma_2\sigma_1\sigma_2$ and $\pi$ maps $\sigma_i$ to $s_i$. It embeds inside the Artin group $A = \langle \sigma,\tau \ | \ \sigma \tau \sigma \tau = \tau \sigma\tau\sigma \rangle$
of type $B_2$ under $\sigma_3 \mapsto \tau^4$,
$\sigma_1 \mapsto \sigma$, $\sigma_2 \mapsto \tau \sigma \tau^{-1}$, and can be identified in this way with the kernel of $A \onto \Z/4\Z$ given by $\tau \mapsto 1$, $\sigma \mapsto 0$. Under this identification, 
the positive generator of $Z(B)$ is $z_B = \sigma_2\sigma_3\sigma_1\sigma_2\sigma_1=\sigma_3\sigma_1\sigma_2\sigma_1\sigma_2$ and maps to $(\sigma \tau\sigma\tau)^2$. We notice that 
$\alpha = \sigma_1\sigma_2\sigma_3=\sigma_3\sigma_1\sigma_2$ maps
to $\tau^2 (\tau \sigma \tau \sigma)$, which centralizes $\tau^4 = \sigma_3$. The order of $\pi(\alpha)$ is $24$
inside $N_0$ and in the quotient group $\overline{N_0}$ as well. Therefore $N_0 = W_0 \rtimes \langle \pi(\alpha)\rangle$. Notice that $\alpha^2 = z_B \tau^4 = z_B \sigma_3$.

We then look for polynomials $T \in \KK[X]$ such that
$(\alpha T(\sigma_3))^{24} = 1$ inside $H_0$. We have
$$
(\alpha T(\sigma_3))^{24} 
= \alpha^{24} T(\sigma_3)^{24}
= z_B^{12} \sigma_3^{12} T(\sigma_3)^{24}
= \sigma_3^{14} T(\sigma_3)^{24}
$$
since $|Z(W)| = 12$ hence $z_B^{12}\sigma_3^{-2} \in \QQ0$
and we need a polynomial $T$ satisfying $T(\sigma)^{24}= \sigma_3^{-14}$.
For this it is enough to have $T \in \KK[X]$ with $T(\sigma)^{12}= \sigma_3^{-7}$,
and this is again a consequence of Lemma \ref{lem:polfield} under our assumptions since $r = 12$.

\subsection{$W = G_{25}$, $W_0 = (\Z/3\Z)^2$}
\label{sect:G25wC3C3}

We have that $B_{25}$ is isomorphic to the Artin group of type $A_3$. We denote its standard Artin generators by $\sigma_1,\sigma_2,\sigma_3$. The Hecke algebra relation is $(\sigma_i-u_0)(\sigma_i-u_1)(\sigma_i-u_2)=0$. We have $B_0 = \langle \sigma_1,\sigma_3 \rangle$.

We have that $\overline{N_0}$ is cyclic of order $6$, generated by the image of $(\sigma_1\sigma_2\sigma_3)^2$. Now, $(\sigma_1\sigma_2\sigma_3)^4 = z_B$, and $z_{B_0} = \sigma_1 \sigma_3$ is centralized by $(\sigma_1\sigma_2\sigma_3)^2$.
Moreover, conjugation by $(\sigma_1 \sigma_2 \sigma_3)^2$ exchanges $\sigma_1$ with $\sigma_3$ (and in particular
does \emph{not} centralize $B_0$). This has for consequence that, although $W_0$ is not irreducible, we
have only $3$ parameters and the Hecke algebra relation is $(\sigma_i-u_0)(\sigma_i-u_1)(\sigma_i-u_2)=0$ for $i \in \{1,3\}$. Since $|Z(W)| = 3$, we have that $z_B^3$ and $z_{B_0}^3$ are the full loops
for $W$ and $W_0$, respectively, so that $z_B^3 = z_{B_0}^3$ inside $\widetilde{B}_0$.
For any $T \in \KK[X]$ we have
$$
\left( (\sigma_1\sigma_2\sigma_3)^2 T(z_{B_0}) \right)^6 = z_{B_0}^3 T(z_{B_0})^6
.$$
It is therefore enough to find $T \in \KK[X]$ with $T(z_{B_0})^2 = z_{B_0}^{-1}$ inside $H_0$.
Now, one checks that
$z_{B_0}$ is annihilated inside $H_0$ by the polynomial $\prod_{0 \leq i,j \leq 2} (X - u_iu_j)$. By Lemma \ref{lem:polfield} such a $T$ exists as soon as this
polynomial is square-free and $\KK$ contains a square root of each of the 
$u_iu_j$, $0 \leq i,j \leq 2$, and this is the case as soon as $\KK$ contains the $\sqrt{u_i}$.

\subsection{$W = G_{33}$, $W_0 = D_4$}

A presentation of $B_{33}$ is with generators
$s,t,u,v,w$, Artin relations symbolized by the diagram
\begin{center}

\begin{tikzpicture}
\draw (0,0) circle (0.2);
\draw (0,0) node {$s$};
\draw (2,0) circle (0.2);
\draw (2,0) node {$t$};
\draw (4,0) circle (0.2);
\draw (4,0) node {$v$};
\draw (6,0) circle (0.2);
\draw (6,0) node {$w$};
\draw (3,1) circle (0.2);
\draw (3,1) node {$u$};
\draw (0.2,0) -- (1.8,0);
\draw (2.2,0) -- (3.8,0);
\draw (4.2,0) -- (5.8,0);
\draw (2.12,.12) -- (2.88,0.88);
\draw (3.88,.12) -- (3.12,0.88);
\end{tikzpicture}
\end{center}
and the additional relations $(vtu)^2 = (uvt)^2=(tuv)^2$
obtained in \cite{BESSISMICHEL,BESSISKPI1} and implemented
in (the development version of) CHEVIE, see \cite{CHEVIE}. A presentation of $B_{34}$ is deduced from it by adding another generator commuting with all the other ones except $w$, and satisfying an Artin relation of length $3$ with it.

The center of $B_{33}$ is generated by $z_B = (stuvw)^5$.
We let $x = stut^{-1}s^{-1}$. Then, $t,v,w,x$ satisfy the Artin relations of type $D_4$ -- as in easily checked using CHEVIE -- and provide generators for the braid group of a parabolic subgroup $W_0$ of type $D_4$.

\begin{center}

\begin{tikzpicture}
\draw (0,0) circle (0.2);
\draw (0,0) node {$t$};
\draw (2,0) circle (0.2);
\draw (2,0) node {$v$};
\draw (3,1) circle (0.2);
\draw (3,1) node {$w$};
\draw (3,-1) circle (0.2);
\draw (3,-1) node {$x$};
\draw (0.2,0) -- (1.8,0);
\draw (2.12,.12) -- (2.88,0.88);
\draw (2.12,-.12) -- (2.88,-0.88);
\end{tikzpicture}
\end{center}
The group $\overline{N_0}$ is cyclic of order $6$, and contains the center of $W$, which has order $2$. Let $c_1 = t(stvwuvtu)^{-1}$. One checks that $\bullet \mapsto \bullet^{c_1}$ permutes the generators of the $D_4$ diagram clockwise. Moreover, its image inside $\overline{N_0}$ has order $6$. Now, explicit
computations show that $z_B^2 c_1^6 = b^{12}$ where
$b = txvw$. Then, considering the generator $z_{B_0}$
of the Artin group $B_0$ of type $D_4$, since $W_0$ has Coxeter number $6$ and center of order $2$ we have $b^6 =  z_{B_0}^2$, whence $c_1^6 = z_{B_0}^4 z_B^{-2} = (z_{B_0}/z_B)^4 z_B^2$.
Since both $Z(W)$ and $Z(W_0)$ have order $2$, $z_{B_0}^2$ and $z_B^2$ are the full loops
in their respective hyperplane complements, and therefore $(z_{B_0}/z_B)^2$ is mapped to $1$
in $\widetilde{H}_0$, and $z_B^2$ is mapped to $z_{B_0}^2$. We thus need to find $T \in \KK[X]$ such that
$(c_1T(z_{B_0}))^6 = z_{B_0}^2 T(z_{B_0})^6 =1$ inside $H_0$, so it is enough to have $T(z_{B_0})^3 = z_{B_0}^{-1}$.

Using CHEVIE we check that, inside $H_0$, $z_{B_0}$ is annihilated by the polynomial
$$
(X-u_0^8u_1^4)(X- u_0^9u_1^3)(X- u_0^{12})(X-  u_0^6u_1^6)(X+u_0^6u_1^6)(X-  u_0^4u_1^8)(X-  u_0^3u_1^9)(X-  u_1^{12})$$
where $u_0,u_1$ are the eigenvalues of the Artin generators. Notice that the condition
that this polynomial is square-free implies that $u_0^6u_1^6 - (- u_0^6u_1^6) = 2 u_0^6u_1^6 \in \KK^{\times}$,
hence the characteristic of $\KK$ cannot be $2$.

This concludes the proof of Theorem \ref{theo:maxipars}.

\section{On the remaining exceptional types}
\label{sect:remaining}

Assume that $W$ is of exceptional type, and $W_0$ is not maximal (and non-trivial). 
In particular $W$ has rank at least $3$. Since the case where $W$ is
a real reflection group is known by Theorem \ref{theo:liftreal}, there only remain $9$ exceptional types to consider. 

 The problem of determining possible liftings
using a systematic computer search fails in general for a couple of reasons, one of
them being the following one. The only known ways so far to
solve the word problem for the complex braid groups are, either to embed them into some Artin group of finite Coxeter type when this is possible, or
to use methods from Garside theory, which involve the monoids (or categories) introduced
by Bessis in~\cite{BESSISKPI1}. One of the problems with these monoids is that there is no known method
yet to write an element of the pure braid group as a product of natural generators
in 1-1 correspondence with the (distinguished) reflections -- even worse, it seems that no such collection of generators has ever been determined,
and the minimal number of generators of the pure braid group is greater than the number of atoms of the monoid. Therefore,
one cannot hope to mimic the methods of Digne and Gomi in \cite{DIGNEGOMI} for the real case. Finally, the more direct approach given by applying a generic Reidemeister-Schreier method from the morphism $B \onto W$ also
fails most of the time, because the groups $W$ are quite big.

In this section we are nevertheless able to deal with all the rank $3$ groups except $G_{27}$,
and with $G_{32}$ (rank $4$). This leaves five open cases ($G_{27}$,$G_{29}$,$G_{31}$,$G_{33}$,$G_{34}$). Notice that, when $W_0$ has rank $1$ (which is the only case to consider when $W$ has rank $3$)
then the centralizer of $W_0$
is equal to its normalizer (see~\cite[Lemma 5.1]{TAYLORNORM}).

\subsection{$W = G_{25}$}
\label{sect:G25rang1}

The braid group $B$ of $W$ is the Artin group of type $A_3$,
with generators $\sigma_1,\sigma_2,\sigma_3$ and the (ordinary) braid
relations between them. Then $W$ is its quotient
by the relations $\sigma_i^3 = 1$, and its Hecke algebra is defined by
the relations $(\sigma_i - u_0)(\sigma_i-u_1)(\sigma_i-u_2)=0$.
 There is a single class of distinguished reflections.

We let $W_0 = \langle s_1 \rangle$. 
A complement of $W_0$ inside $N_0$ is given
by $U_0 = \langle s_3, \zeta_1 \rangle \simeq G(3,1,2)$ with $\zeta_1 = (s_1s_2)^3$. The defining relations
are $s_3^3 = \zeta_1^2 = 1$ and $s_3 \zeta_1 s_3 \zeta_1=  \zeta_1s_3 \zeta_1 s_3$.

 We have $\sigma_3^3 \in \QQ0$. We set $z_1 = (\sigma_1 \sigma_2)^3$. We check that
$z_1$ and $\sigma_3$ satisfy a braid relation of type $B_2$.
Then, $z_1^2$ is the full loop inside $\langle \sigma_1, \sigma_2 \rangle$,
hence $z_1^2 \equiv \sigma_1^3 \mod \QQ0$. If there exists $T \in K[X]$ such
that $T(\sigma_1)^2 = \sigma_1^{-3}$,
which is the case
by Lemma \ref{lem:polfield} as soon as
the  $u_i - u_j \in \KK^{\times}$ for $i \neq j$ and
$\KK$ contains $\sqrt{u_i}$, 
 since $\sigma_1$ commutes with $z_1$
we get a lift $z_1 T(\sigma_1)$ of order $2$ which still satisfies the
$B_2$ relation with $\sigma_3$. Since the image of $\sigma_3$ has order $3$ this
provides a convenient lift of $U_0 \simeq \overline{N_0}$.

\subsection{$W = G_{26}$}

The braid group of $W$ is the Artin group of type $B_3$,
with generators indexed as follows
\begin{center}
\begin{tikzpicture}
\draw (0,0+.05) -- (1,0+.05);
\draw (0,0-.05) -- (1,0-.05);
\draw (1,0) -- (2,0);
\fill[color=cyan] (0,0) circle (.3);
\fill[color=cyan] (1,0) circle (.3);
\fill[color=cyan] (2,0) circle (.3);
\draw (0,0) node {$\sigma_1$};
\draw (1,0) node {$\sigma_2$};
\draw (2,0) node {$\sigma_3$};
\end{tikzpicture}
\end{center}
and the Hecke relations are $(\sigma_1-v_0)(\sigma_1 - v_1) = 0$, $(\sigma_i-u_0)(\sigma_i - u_1)(\sigma_i - u_2) = 0$ for $i \in \{2,3 \}$.
Then $W$ is the quotient of $B$ by the relations $\sigma_1^2 = \sigma_2^3 = \sigma_3^3 = 1$, and $s_i$ denotes the
image of $\sigma_i$ inside $W$. 
\subsubsection{$W = G_{26}$, $W_0 = \Z/3\Z$}
\label{sect:G26wC3}

We first look at $W_0 = \langle s_3 \rangle \simeq \Z/3\Z$. Then $\overline{N_0}$
has order $36$, and a complement of $W_0$ inside $N_0$
is obtained via $U_0 = \langle z_1,z_W,s_1 \rangle$,
where $z_1 = (s_2s_3)^3$, $z_W = (s_1s_2s_3)^3$.
The orders of $s_1,z_1,z_W$ are $2,2,6$ and the subgroup generated by $s_1,z_1$ is a dihedral group of order $12$,
which contains $z_W^3=(s_1z_1)^3$. 
It is easily checked that $U_0$ has for presentation
$$
\langle s_1,z_1,z_W \ | \ s_1^2 = z_1^2 = 1, (s_1z_1)^3 = (z_1 s_1)^3 = z_W^3, s_1z_W = z_W s_1, z_1 z_W = z_W z_1  \rangle.
$$

If we manage to find $\tilde{s}_1 \in s_1 H_0,\tilde{z}_1 \in z_1 H_0,\tilde{z}_W \in z_W H_0 \subset \widetilde{H}_0$ with the same orders and satisfying these relations, we are
done.

We choose $\tilde{s}_1 = \sigma_1$, $\tilde{z}_W = z_BT_1(\sigma_3)$, $\tilde{z}_1 = (\sigma_2 \sigma_3)^3T_2(\sigma_3)$ for some $T_1,T_2 \in \KK[X]$
and with $z_B = (\sigma_1\sigma_2\sigma_3)^3 \in Z(B)$.
Then the commutation relations of $\tilde{z}_W$ are satisfied, and $\sigma_1^2 \in \QQ0$ hence $\tilde{s}_1$ has order $2$ inside $\widetilde{H}_0$. 
Moreover, $z_B^6$ is the central full loop hence is equal to $\sigma_3^3$ modulo $\QQ0$, and $((\sigma_2 \sigma_3)^3)^2$
is the central full loop  for the parabolic subgroup $\langle s_2,s_3 \rangle$ of type $G_4$, hence is also equal
to $\sigma_3^3$ modulo $\QQ0$. It follows that, if $T_1,T_2$ can be chosen so that $T_1(\sigma_3)^6 = \sigma_3^{-3}$ and $T_2(\sigma_3)^2 = \sigma_3^{-3}$
then these lifts have the appropriate orders.  For $T_1$ it is enough to have $T'_1 \in \KK[X]$ such that
$T'_1(\sigma_3)^2 = \sigma_3^{-1}$. By Lemma \ref{lem:polfield} a sufficient condition for these
polynomials to exist is that $\prod_i(X-u_i)$ is square-free and $\KK$ contains a square root of each $u_i$.

Therefore the only thing
remaining to be checked is that $(\tilde{s}_1 \tilde{z}_1)^3(\tilde{z}_1\tilde{s_1})^{-3}$ is $1 \in H_0 \subset \widetilde{H}_0$. Since $T_1(\sigma_3)$ and $T_2(\sigma_3)$ commute with the other terms involved, it is enough to check that $x = (\sigma_1 (\sigma_2 \sigma_3)^3)^3((\sigma_2 \sigma_3)^3\sigma_1)^{-3}
\in \hat{B}_0$ actually belongs to $\QQ0$.

For this we do computations in the computer system GAP4, using a Reidemeister-Schreier type method to get a generating set for $P = \Ker(B \onto W)$ and express $x$ (as a lengthy expression) in terms of these generators. The generators obtained by this method are 21 elements which turn out to be conjugates of powers
of the generators. These are the following ones.

$$
\begin{array}{l}
\sigma_1^{-2}, ^{\sigma_2}(\sigma_1^{-2}), \sigma_2^3,(\sigma_1^{-2})^{\sigma_2},\sigma_3^3, ^{\sigma_1}(\sigma_2^3),^{\sigma_2}(\sigma_3^3),(\sigma_3^3)^{\sigma_2},
^{\sigma_3\sigma_2}(\sigma_1^{-2}),(\sigma_1^{-2})^{\sigma_2\sigma_3^{-1}},
(\sigma_1^{-2})^{\sigma_2\sigma_3},^{\sigma_1\sigma_2}(\sigma_3^3),\\
(\sigma_3^3)^{\sigma_2\sigma_1^{-1}},
^{\sigma_2\sigma_1\sigma_2}(\sigma_3^3),
^{\sigma_2^{-1}\sigma_1\sigma_2}(\sigma_3^3),
^{\sigma_1^2\sigma_3^{-1}\sigma_2}(\sigma_1^{-2}),
^{\sigma_2\sigma_1^2\sigma_3^{-1}\sigma_2}(\sigma_1^{-2}),
^{\sigma_1^{-1}\sigma_2^2(\sigma_2\sigma_1)^2\sigma_2^{-1}}(\sigma_3^3),\\
(\sigma_1^{-2})^{\sigma_2^{-1}\sigma_3(\sigma_2^{-1}\sigma_1^{-1})^2\sigma_1^{-1}\sigma_2\sigma_1\sigma_2^2(\sigma_2\sigma_1)^2\sigma_2^{-1}\sigma_3^3
\sigma_2(\sigma_1^{-1}\sigma_2^{-1})^2\sigma_2^{-2}\sigma_1}, 
(\sigma_3^3)^{\sigma_2\sigma_1^{-1}\sigma_3^3\sigma_2},
(\sigma_3^3)^{\sigma_2\sigma_1^{-1}(\sigma_2\sigma_3^2)^2\sigma_3\sigma_2^{-1}\sigma_3^{-3}}
\end{array}
$$

Being conjugates of powers of the generators, they belong to
$\QQ0 = \Ker(P \onto P_0 \simeq \Z)$ exactly when the corresponding
hyperplane is not $\Ker(s_3 - 1)$. This is the case for all of them
except for $\sigma_3^3$, which is mapped to $1$ under $P \onto P_0 \simeq \Z$. Using GAP4 we get that the image of $x$ under this map is $0$,
and this defines a morphism $U_0 \to \widetilde{H}_0^{\times}$ satisfying the assumption of Lemma \ref{lem:semidirect}.

\subsubsection{$W = G_{26}$, $W_0 = \Z/2\Z$}
\label{sect:G26wA1}

We let $W_0 = \langle s_1 \rangle \simeq \Z/2\Z$.
In this case $\overline{N_0}$ has order $72$, and a complement $U_0$ of $W_0$ inside
$N_0$ is easily checked to
be generated by $s_3$ and $z_3 = s_1s_2s_1s_2$. These elements
both have order $3$, and satisfy the relations $(s_3 z_3)^2 = (z_3 s_3)^2$. These relations are known to be defining relations
of the complex reflection group $G_5$, which has order $72$, therefore they indeed provide a presentation of $U_0$.

It is thus sufficient to find elements $\tilde{s}_3 \in s_3 H_0$,$\tilde{z}_3 \in z_3 H_0$ satisfying these relations. We set $\tilde{s}_3 = \sigma_3$ and $\tilde{z}_3 = (\sigma_1\sigma_2)^2 T(\sigma_1)$ for
$T \in \KK[X]$ such that $\tilde{z}_3$ has order $3$.
This is possible when $(\sigma_1\sigma_2)^6T(\sigma_1)^3$
is $1$ inside $H_0$. Now, since $(\sigma_1\sigma_2)^2$ is the positive generator of the center of the parabolic
braid group associated to the parabolic subgroup
$\langle s_1,s_2 \rangle$ of type $G(3,1,2)$, whose center has order $3$, we
know that $(\sigma_1\sigma_2)^6$
is the corresponding full central loop. Therefore, its image inside
$P_0$ is the full loop associated to the parabolic
subgroup $W_0$, that is $\sigma_1^2$. It follows that, inside $\widetilde{H}_0$, we have
 $(\sigma_1\sigma_2)^6T(\sigma_1)^3 = \sigma_1^2T(\sigma_1)^3$, and we want $T$ to satisfy
$T(\sigma_1)^3 = \sigma_1^{-2}$ inside $H_0$. 
By Lemma \ref{lem:polfield} this is possible
as soon as $(X - v_0)(X - v_1)$ is square-free and $\KK^{\times}$ contains 3rd roots of the parameters $v_i$.
 Then, it
is sufficient to check that $x = (\sigma_3 (\sigma_1\sigma_2)^2)^2
((\sigma_1\sigma_2)^2\sigma_3)^{-2} \in \QQ0$ to get a convenient morphism $U_0 \to \widetilde{H}_0^{\times}$, and this
is checked by the same computational method as in section \ref{sect:G26wC3}.

\subsection{$W = G_{24}$}
\label{sect:G24}
\subsubsection{\it A new presentation for $G_{24}$}

The presentation given in CHEVIE, originating from \cite{BESSISMICHEL}, is by generators $s,t,u$, and relations
$stst = tsts$, $sus=usu$, $tut = utu$ and $stustustu
= tstustust$. We propose an alternative presentation
such that the centralizer of a reflection  is easily described. We introduce additional generators $x = s^t = t^{-1}st$, $y = ^s t = s t s^{-1}$. Using CHEVIE it is easily checked
that the relations symbolized by the following diagram hold.
\begin{center}
\begin{tikzpicture}
\draw (0,0) -- (1,1);
\draw (0,0) -- (-1,1);

\draw (0+.05,0+.05) -- (1+.05,-1+.05);
\draw (0-.05,0-.05) -- (1-.05,-1-.05);
\draw (0+0.05,0-0.05) -- (-1+0.05,-1-0.05);
\draw (0-0.05,0+0.05) -- (-1-0.05,-1+0.05);

\fill[color=cyan] (1,1) circle (.3);
\fill[color=cyan] (1,-1) circle (.3);
\fill[color=cyan] (-1,1) circle (.3);
\fill[color=cyan] (-1,-1) circle (.3);
\fill[color=cyan] (0,0) circle (.3);
\draw[decoration = {markings, mark=at position 0 with {\arrow{<}}},
  postaction={decorate}] (0,0) circle (1.7);
\draw (0,0) circle (1.7);
\draw (-1,1) node {$s$};
\draw (1,1) node {$t$};
\draw (1,-1) node {$x$};
\draw (-1,-1) node {$y$};
\draw (0,0) node {$u$};
\end{tikzpicture}
\end{center}

In this `steering wheel' diagram, all edges represent Artin relations, that is $sus=usu$, $tut = utu$, $xuxu = uxux$, $uyuy = yuyu$, and the oriented circle has the same meaning
as for the Corran-Picantin presentations of the groups
$G(e,e,n)$ (see \cite{CORPIC}), namely it symbolizes the relation
$st=tx=xy=ys$, originating from the dual braid monoid of dihedral type $I_2(4) = B_2$.

In order to prove that this indeed provides a presentation
of $B$, it is sufficient to check that the relation $stustustu
= tstustust$ is a consequence of these relations. This is done as follows~:
$$
\begin{array}{lclclcl}
stustustu&=& stu ( ^s t) \underline{sus}tu &=&
stuyus\underline{utu} 
&=& stuyustut \\ &=& stuyu ( ^s t) sut 
&=& st\underline{uyuy} sut &=& styuy\underline{usu}t \\
&=& \underline{st}yuysust & =& t \underline{xy} u\underline{ys}ust 
& = & tstustust \\
\end{array}
$$

A collection of 21 generators for $P = \Ker(B \onto W)$ is obtained
as follows

$$
\begin{array}{l}
s^{-2}, t^{-2}, u^{-2}, x^{-2}, y^{-2}, ^s(u^{-2}), ^t(u^{-2}), ^u(x^{-2}), ^u(y^{-2}), ^x(u^{-2}), ^y(u^{-2}), ^{st}(u^{-2}), 
  ^{su}(x^{-2}),\\ ^{su}(y^{-2}), ^{sx}(u^{-2}), ^{sy}(u^{-2}), ^{tu}(x^{-2}), ^{tu}(y^{-2}), 
  ^{xu}(y^{-2}), ^{yu}(x^{-2}), ^{usx}(u^{-2})
  \end{array}
$$

\subsubsection{The centralizer of a reflection}

Let $W_0 = \langle \pi(u) \rangle$. Then $\overline{N_0}$ has order $8$,
and it is easily checked that $W_0$ admits a complement
$U_0$ inside $N_0$ generated by $\pi(xux)$ and $\pi(yuy)$. Actually,
both elements have order $2$ and satisfy a Coxeter relation
of length 4. In order to lift this complement it is thus enough to find elements $\tilde{a},\tilde{b} \in B$ mapping to $\pi(xux),\pi(yuy)$, such that $\tilde{a}^2,\tilde{b}^2, (\tilde{a}\tilde{b})^2(\tilde{b}\tilde{a})^{-2} \in \QQ0$.

By simply setting $\tilde{a} = xux$,
$\widetilde{B} = yuy$, it is checked computationally that,
when written in terms of the generators of $P$ given above,
these elements map to $0$ under the morphism $P \onto P_0 \simeq \Z$ where all the generators map to $0$ but $u^{-2}$. In the case of the order relation, it is even
possible to check this `by hand', as
$$
u^{-2}.ux^2u^{-1}.u^2.x^2.x^{-1}u^2 x
= u^{-1}x^2uxu^2 x
= u^{-1}x(xuxu)u x
= (xuxu)u^{-1}xu x = (xux)^2
$$
and similarly with $x$ replaced with $y$. This provides a group-theoretic splitting in this case.

\subsection{$W = G_{32}$}
\label{sect:G32}

In this case, $B$ is the Artin group of type $A_4$, with generators $\sigma_1,\dots,\sigma_4$.

\subsubsection{$W = G_{32}$, $W_0 = \mathbb{Z} / 3 \mathbb{Z}$}

Let $W_0= \langle s_1 \rangle$. By computer we get $|\overline{N_0}|
= 1296 = | G_{26}|$. We set $z_1 = (\sigma_1 \sigma_2)^3$.
Then, the images of $z_1,\sigma_3,\sigma_4$ inside $W$
centralize $W_0$, have order $2,3,3$, and generate together a subgroup $U_0$ of order $1296$ intersecting $W_0$
trivially. Therefore it is a complement to $W_0$ inside $N_0$.

Moreover, inside $B$ one checks that $z_1,\sigma_3,\sigma_4$ satisfy the relations of the braid group of $G_{26}$, which also has order $1296$. Since the order of the generators for $G_{26}$ are also $2,3,3$ this proves
in particular that $U_0$ is isomorphic to $G_{26}$.  
Finally, we have $\sigma_3^3  \in \QQ0$, $\sigma_4^3 \in \QQ0$, and $z_1^2 \equiv \sigma_1^3\mod \QQ0$. Letting $x = z_1 T(\sigma_1)$, $y = \sigma_3$, $z = \sigma_4$ with $T(\sigma_1)^2 = \sigma_1^{-3}$, we get $x^2 = y^3 = z^3= 1$ and that $x,y,z$ satisfy inside $\widetilde{H}_0$ the braid relations of the braid group of type $G_{26}$
as soon as $\prod_i(X-u_i)$ is square-free and $\KK$ contains the $\sqrt{u_i}$, by Lemma~\ref{lem:polfield}.

\subsubsection{$W = G_{32}$, $W_0 = \mathbb{Z} / 3\mathbb{Z} \times \mathbb{Z} / 3 \mathbb{Z}$}

We first consider the case where $ W_0 = \langle s_1,s_3 \rangle$. Then $\overline{N_0}$ has order $72$
and permutes non-trivially the two distinguished reflections in $W_0$. Therefore we still have only 3 parameters
$u_0,u_1,u_2 \in \KK^{\times}$ for the Hecke algebra of $W_0$.
We
set $a = (s_1 s_2 s_3)^2$, $c = (s_3 s_4)^3$. Then
$\langle a,c \rangle$ has trivial intersection with $W_0$,
normalizes it, and has order $72$, like $G(6,1,2)$.
Moreover, $a$ and $c$ have order $6$ and $2$, respectively.
We let $\tilde{a} = (\sigma_1 \sigma_2 \sigma_3)^2$ and
$\tilde{c} = (\sigma_3 \sigma_4 )^3$. These are preimages
of $a,c$, and one checks that $(\tilde{a}\tilde{c})^2 = 
(\tilde{c}\tilde{a})^2$
inside the braid group $B$. In particular, we have that $\overline{N_0}\cong G(6,1,2)$. Now, $\tilde{a}^6$ is the full central twist
of the parabolic subgroup $\langle s_1,s_2,s_3 \rangle$,
and thus its image inside $\widetilde{H}_0$ is equal to
the image of $z_1 = (\sigma_1\sigma_3)^3$. Similarly,
the image of $\tilde{c}^2$ is equal to the image of $z_2 = \sigma_3^3$.

\medskip

We check that $z_1,z_2,z_2^{\tilde{a}}$ and $\tilde{c}$ pairwise commute,
that $\tilde{a}^2$ commutes with $z_1,z_2$,
and that $\tilde{a}$ commutes with $z_1$.
Letting
$x = \tilde{a}T_x(z_1)$, $y = \tilde{c} T_y(z_2)$
for some $T_x,T_y \in \KK[X]$, we have 
$$
\begin{array}{lcl}
xyxy &=&  
\tilde{a}T_x(z_1)\tilde{c} T_y(z_2) \tilde{a}T_x(z_1)\tilde{c} T_y(z_2) \\
&=&  
\tilde{a}\tilde{c}\tilde{a}\tilde{c}T_x(z_1) (T_y(z_2)^{ \tilde{a}})T_x(z_1) T_y(z_2) \\
&=&  
\tilde{c}\tilde{a}\tilde{c}\tilde{a}  (T_y(z_2)^{\tilde{a}^2}) T_x(z_1)(T_y(z_2)^{ \tilde{a}})T_x(z_1) \\
&=&  
\tilde{c}\tilde{a}  (T_y(z_2)^{\tilde{a}}) T_x(z_1)\tilde{c}\tilde{a}(T_y(z_2)^{ \tilde{a}})T_x(z_1) \\
&=&  
\tilde{c}  T_y(z_2)\tilde{a} T_x(z_1)\tilde{c}T_y(z_2)\tilde{a}T_x(z_1) \\
&=&  
yxyx \\
\end{array}
$$

Moreover, we have $x^6 = \tilde{a}^6 T_x(z_1)^6= z_1 T_x(z_1)^6$,
while $y^2 = \tilde{c}^2 T_y(z_2)^2 = z_2 T_y(z_2)^2$,
so we need to have $T_x(z_1)^6 = z_1^{-1}$,
$T_y(z_2)^2 = z_2^{-1}$, and
$z_1$ and $z_2$ are obviously 
annihilated
by 
$\prod_{0 \leq i,j \leq 2 } (X - u_i^3 u_j^3)$
and $\prod_i (X - u_i^3)$. In order to apply Lemma \ref{lem:polfield}
we thus need that these polynomials are square-free and that
$\sqrt{u}_i \in \KK , 0 \leq i \leq 2$.

\subsubsection{$W = G_{32}$, $W_0 = G_4$}
\label{sect:G32G4}

We now consider the case where $ W_0 = \langle s_1,s_2 \rangle \simeq G_4$. Then $\overline{N_0}$ again has order $72$, but is not isomorphic to $G(6,1,2)$. Note that $G(6,1,2)$ and $G_5$ are two complex reflection groups of the same rank and the same order which are not isomorphic as abstract groups.

We
set $a = (s_1 s_2 s_3)^4$, $c = s_4$. They both centralize $W_0$,
have order $3$, satisfy $(ac)^2 = (ca)^2$, and $\langle a,c \rangle$ has order $72$ and has trivial intersection with $W_0$. 
Therefore it is a suitable complement, isomorphic to the complex reflection group $G_5$. Let
$\tilde{a} = (\sigma_1 \sigma_2 \sigma_3)^4$, $\tilde{c} = \sigma_4$. One checks that $(\tilde{a}\tilde{c})^2 = (\tilde{c}\tilde{a})^2$ inside $B$, and moreover $\tilde{c}^3 \in \QQ0$, $\tilde{a}^3 \equiv z_{B_0}^2 \mod \QQ0$ where $z_{B_0} = (\sigma_1 \sigma_2)^3$. If there exists $T \in \KK[X]$ with $z_{B_0}^2T(z_{B_0})^3 = 1$,
letting $x = \tilde{a}T(z_{B_0})$,
 and
$y = \tilde{c}$ provides a morphism $\overline{N_0} \to \widetilde{H}_0^{\times}$ to which we can apply Lemma \ref{lem:semidirect}.

Now, the eigenvalues of $z_{B_0}$ can be computed on the irreducible representations of the
Hecke algebra of $G_4$ (see e.g. \cite{MARINWAGNER} \S 5.3).
One gets that $z_{B_0}$ acts on the irreducible representations of the generic Hecke algebra
of type $G_4$ with the values $\{ u_i^3 , -u_i^3u_j^3, (u_0u_1u_2)^2 ; 0 \leq i,j \leq 2, i \neq j \}$,
and therefore it is annihilated inside $H_0$ by the polynomial
\begin{equation}
\label{eq:polminG4}
\left(
\prod_{0 \leq i \leq 2} (X - u_i^3)
\right)
\left(
\prod_{\stackrel{0 \leq i,j \leq 2}{i \neq j}} (X +u_i^3u_j^3)
\right)
\left(
X - (u_0u_1u_2)^2
\right)
\end{equation}

Therefore, Lemma 2.6 can be applied when this polynomial is square-free 
and $(u_0u_1u_2)^2$ admits a 3rd root, which is in particular the case when $u_0u_1u_2$ admits such a root.

\subsection{Conditions for $G_{24}$, $G_{25}$, $G_{26}$, $G_{32}$}

\begin{proposition} Let $W = G_{24}$, and $u_0,u_1 \in \KK^{\times}$ the defining parameters. 
Let
$W_0$ be a parabolic subgroup of
$W$. Then $\widetilde{H}_0 \simeq \overline{N_0} \ltimes H_0$
in the following cases:
\begin{enumerate}
\item If $W_0$ has rank $1$; in this case, there is a group-theoretic splitting and the complement
is a Coxeter group of type $B_2$.
\item If $W_0$ has rank $2$ and type $A_2$, when the polynomial $(X-u_0^6)(X-u_1^6)(X+u_0^3u_1^3)$ is square-free
and $-u_0^3u_1^3$ admits a square root in $\KK$.
In this case the
complement is $Z(W) \simeq \mathbb{Z} / 2 \mathbb{Z}$.
\item If $W_0$ has rank $2$ and type $B_2$; in this case, there is a group-theoretic splitting and the complement is $Z(W) \simeq \mathbb{Z} / 2 \mathbb{Z}$.
\end{enumerate}
\end{proposition}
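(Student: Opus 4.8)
The plan is to treat the three cases separately, reducing each to results already in hand. By Lemma~\ref{lem:conjug} it suffices to handle one representative of each conjugacy class of parabolic subgroups, and since $G_{24}$ has rank $3$ its proper non-trivial parabolic subgroups have rank $1$ (case (1)) or rank $2$ of type $A_2$ or $B_2$ (cases (2) and (3)); thus the three cases are exhaustive. Case (1) is in fact already settled: in Section~\ref{sect:G24} we produced, for $W_0=\langle\pi(u)\rangle$, a group-theoretic splitting of~(\ref{ses}) whose complement $U_0$ is generated by $\pi(xux)$ and $\pi(yuy)$, two involutions satisfying a braid relation of length $4$, i.e. a Coxeter group of type $B_2$. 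Since all reflections of $G_{24}$ are conjugate, Lemma~\ref{lem:conjug} extends this to every rank-$1$ parabolic, and the conclusion follows for case (1) with no condition on the parameters.

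For case (2) I would invoke Theorem~\ref{theo:maxipars}. As $(W,W_0)=(G_{24},A_2)$ is neither of the two excluded pairs, it suffices to find $T\in\KK[X]$ with $T(z_{B_0})^{|Z(W)|}=z_{B_0}^{-|Z(W_0)|}$ inside $H_0$. Here $|Z(W)|=2$ while $|Z(W_0)|=|Z(\mathfrak{S}_3)|=1$, so the target equation is $T(z_{B_0})^2=z_{B_0}^{-1}$. The element $z_{B_0}$ is the full twist $\Delta^2$ of the type-$A_2$ braid group, and by Lemma~\ref{lem:typea} together with the row $n+1=3$ of Table~\ref{tab:tabulationDeltaAn} the Garside element $\Delta$ is annihilated by $(X^2-u_0^6)(X^2+u_0^3u_1^3)(X^2-u_1^6)$; substituting $X^2\mapsto z_{B_0}$ shows that $z_{B_0}$ is annihilated by $\chi(X)=(X-u_0^6)(X-u_1^6)(X+u_0^3u_1^3)$. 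Applying Lemma~\ref{lem:polfield} with $a=2$, $b=-1$ and this $\chi$, the desired $T$ exists precisely when $\chi$ is square-free and $\KK$ contains a square root of each root. Since $u_0^3$ and $u_1^3$ are automatic square roots of $u_0^6$ and $u_1^6$, the only genuine requirement is a square root of $-u_0^3u_1^3$, which is exactly the stated hypothesis.

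For case (3) I would exhibit the group-theoretic splitting directly. As in the proof of Theorem~\ref{theo:maxipars} we have $N_0=W_0\times Z(W)$ with $z_B,z_{B_0}\in\hat{B}_0$, and since $|Z(W)|=|Z(W_0)|=2$ one has $z_B^2 z_{B_0}^{-2}\in\QQ0$. Because $z_B$ is central in $\widetilde{B}_0$, the element $z_Bz_{B_0}^{-1}\in\widetilde{B}_0$ satisfies $(z_Bz_{B_0}^{-1})^2=z_B^2z_{B_0}^{-2}=1$, and it projects onto $\pi(z_B)=-\Id$ modulo $W_0$, which generates $\overline{N_0}\simeq\Z/2\Z$ (as $-\Id\notin W_0$). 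Hence it defines an order-$2$ section of~(\ref{ses}), so the sequence splits, and Lemma~\ref{lem:semidirect} yields $\widetilde{H}_0\simeq\overline{N_0}\ltimes H_0$ with no condition on the parameters.

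The main obstacle is the parameter bookkeeping in case (2): one must verify that the annihilating polynomial of $z_{B_0}$ furnished abstractly by Lemma~\ref{lem:typea} is exactly $\chi$ — in particular that only the exponents $(6,0),(3,3),(0,6)$ occur, as recorded in Table~\ref{tab:tabulationDeltaAn} — and then confirm that the square-root conditions of Lemma~\ref{lem:polfield} collapse to the single requirement on $-u_0^3u_1^3$. Cases (1) and (3), by contrast, are essentially immediate from the cited results.
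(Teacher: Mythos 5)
Your proposal is correct and takes essentially the same route as the paper's proof: case (1) by citing Section~\ref{sect:G24}, case (2) by reducing to Theorem~\ref{theo:maxipars} and obtaining the annihilating polynomial of $z_{B_0}=\Delta^2$ from Lemma~\ref{lem:typea} and Table~\ref{tab:tabulationDeltaAn} before applying Lemma~\ref{lem:polfield}, and case (3) by the group-theoretic splitting $\langle z_B z_{B_0}^{-1}\rangle$ coming from $|Z(W)|=|Z(W_0)|=2$. You merely make explicit some steps the paper leaves implicit (the substitution $X^2\mapsto z_{B_0}$ and the collapse of the square-root conditions to the single one on $-u_0^3u_1^3$), so no correction is needed.
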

\begin{proof}
In the case where $W_0$ has rank $1$, this has been proved in Section~\ref{sect:G24}. When $W_0$ has rank $2$ it is maximal
and the complement is $Z(W) \simeq \mathbb{Z} / 2 \mathbb{Z}$. Therefore, when $W_0$ has type $B_2$, since $Z(W_0) \simeq \mathbb{Z} / 2 \mathbb{Z}$,
we get a group-theoretic splitting via $\langle z_B z_{B_0}^{-1}\rangle$. And, when $W_0$ has type $A_2$, since $Z(W_0) = 1$,
we get an isomorphism $\widetilde{H}_0 \simeq \overline{N_0} \ltimes H_0$ by Lemma~\ref{lem:polfield} under the condition of the statement, by
Table~\ref{tab:tabulationDeltaAn} (see the proof of Lemma \ref{lem:typea} and the comment after it on how to determine the polynomial from the table).

\end{proof}

\begin{proposition} Let $W = G_{25}$, and $u_0,u_1,u_2 \in \KK^{\times}$ the defining parameters.
 Let
$W_0$ be a parabolic subgroup of
$W$. Then $\widetilde{H}_0 \simeq \overline{N_0} \ltimes H_0$ in the following cases:
\begin{enumerate}
\item If $W_0$ has rank $1$, when the polynomial $\prod_i (X-u_i)$ is square-free and
 $\sqrt{u_i} \in \KK$. In this case the
complement is isomorphic to a complex reflection group of type $G(3,1,2)$.
\item If $W_0$ has rank $2$ and type $G_4$,
when the polynomial (\ref{eq:polminG4}) is square-free and
 $\sqrt[3]{u_0u_1u_2} \in \KK$.
 In this case the
complement is $Z(W) \simeq \mathbb{Z} / 3 \mathbb{Z}$.
\item If $W_0$ has rank $2$ and type $\mathbb{Z} / 3 \mathbb{Z}\times \mathbb{Z} / 3 \mathbb{Z}$, when the polynomial 
$\prod_{0 \leq i,j \leq 2} (X - u_iu_j)$ is square-free and
$\sqrt{u_i} \in \KK$. 
In this case the
complement is a cyclic group of order $6$.
\end{enumerate}
\end{proposition}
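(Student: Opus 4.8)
The plan is to recognize this proposition as a synthesis of the case-by-case results already obtained for $G_{25}$, fed into the general criteria of Theorem~\ref{theo:maxipars} and Lemma~\ref{lem:polfield}. Since $G_{25}$ has rank $3$, every non-trivial proper parabolic subgroup has rank $1$ or $2$, and by \cite[Theorem 3.9]{TAYLORREFL} it is conjugate to a standard one; by Lemma~\ref{lem:conjug} it then suffices to treat one representative per conjugacy class. There is a single class of rank $1$ parabolics (as $W$ has one class of reflections), and two classes of rank $2$ (hence maximal) parabolics, of types $G_4$ and $\Z/3\Z \times \Z/3\Z$, according to whether two reflection generators braid or commute. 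These are exactly the three cases of the statement, so it remains to match each to an earlier computation.

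First I would dispose of case~(1): this is precisely Subsection~\ref{sect:G25rang1}, where under the hypotheses that $\prod_i(X-u_i)$ is square-free and $\sqrt{u_i}\in\KK$ a complement $U_0\simeq G(3,1,2)$ is lifted to $\widetilde{H}_0^{\times}$, so nothing further is required. Case~(3), where $W_0$ has type $\Z/3\Z\times\Z/3\Z$, is the exceptional maximal-rank pair $(G_{25},(\Z/3\Z)^2)$ singled out in Theorem~\ref{theo:maxipars} and treated in Subsection~\ref{sect:G25wC3C3}: there the problem reduces to producing $T\in\KK[X]$ with $T(z_{B_0})^2 = z_{B_0}^{-1}$, and since $z_{B_0}$ is annihilated by $\prod_{0\le i,j\le 2}(X-u_iu_j)$, Lemma~\ref{lem:polfield} yields such a $T$ exactly when that polynomial is square-free and $\KK$ contains the $\sqrt{u_i}$, giving a cyclic complement of order $6$.

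The substantive case is~(2), where $W_0$ has type $G_4$ and is maximal and not exceptional, so I would apply the generic branch of Theorem~\ref{theo:maxipars}. Here $|Z(W)|=3$ while $|Z(G_4)|=2$ (the center order of $G_4$ being $\gcd(4,6)=2$), and $N_0 = W_0\times Z(W)$, so the complement is $Z(W)\simeq\Z/3\Z$ and the criterion becomes the existence of $T\in\KK[X]$ with $T(z_{B_0})^{3} = z_{B_0}^{-2}$ inside $H_0$. The minimal polynomial of $z_{B_0}$ inside $H_0=H(G_4)$ depends only on $G_4$, hence is the polynomial~(\ref{eq:polminG4}) already computed in Subsection~\ref{sect:G32G4}, whose roots are the $u_i^3$, the $-u_i^3u_j^3$ for $i\neq j$, and $(u_0u_1u_2)^2$. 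Feeding this into Lemma~\ref{lem:polfield} with $a=3$ and $b=-2$ reduces matters to the square-freeness of~(\ref{eq:polminG4}) together with the existence in $\KK$ of a cube root of $v^{-2}$ for each such root $v$.

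The step I expect to need the most care is this last reduction, since a slip in the exponent $b=-|Z(G_4)|$ or in a sign would silently alter the stated hypothesis. Checking it, $(u_i^3)^{-2}$ and $(u_i^3u_j^3)^{-2}$ are the cubes of $u_i^{-2}$ and $u_i^{-2}u_j^{-2}$ and so impose no new condition, while for $v=(u_0u_1u_2)^2$ a cube root of $v^{-2}=(u_0u_1u_2)^{-4}$ exists precisely when $\sqrt[3]{u_0u_1u_2}\in\KK$. This is exactly the hypothesis of case~(2), so all three cases follow and the proposition is proved.
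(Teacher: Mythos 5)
Your proof is correct and takes essentially the same route as the paper: cases (1) and (3) are delegated to Sections~\ref{sect:G25rang1} and~\ref{sect:G25wC3C3} respectively, and case (2) is handled, exactly as in the paper, by the non-exceptional branch of Theorem~\ref{theo:maxipars} with $|Z(W)|=3$, $|Z(W_0)|=2$, the annihilating polynomial~(\ref{eq:polminG4}) computed in Section~\ref{sect:G32G4}, and Lemma~\ref{lem:polfield}. Your explicit check that the roots $u_i^3$ and $-u_i^3u_j^3$ impose no condition, so that only $\sqrt[3]{u_0u_1u_2}\in\KK$ remains, merely spells out what the paper compresses into ``we get similarly the additional condition.''
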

\begin{proof}
In the case where $W_0$ has rank $1$, this has been proved in Section~\ref{sect:G25rang1}. When $W_0$ has rank $2$ and
type $\mathbb{Z} / 3\mathbb{Z} \times \mathbb{Z} / 3\mathbb{Z}$, this has been proved in Section~\ref{sect:G25wC3C3}. We now assume
that $W_0$ is maximal of type $G_4$. Then $|Z(W)|=3$ and $|Z(W_0)|=2$ and
the polynomial (\ref{eq:polminG4}) annihilates $z_{B_0}$
as in Section~\ref{sect:G32G4}, and we get similarly the
additional condition $\sqrt[3]{u_0u_1u_2} \in \KK$.
This concludes the proof.
\end{proof}

\begin{proposition} Let $W = G_{26}$ and
 $v_0,v_1,u_0,u_1,u_2\in \KK^{\times}$
 the defining parameters.
The Hecke relations are $(\sigma_1-v_0)(\sigma_1-v_1)=0$,
$(\sigma_i-u_0)(\sigma_i-u_1)(\sigma_i-u_2)=0$ for $i=2,3$. Let
$W_0$ be a parabolic subgroup of
$W$. Then $\widetilde{H}_0 \simeq \overline{N_0} \ltimes H_0$ in the following cases:
\begin{enumerate}
\item If $W_0$ has rank $1$ and type $\mathbb{Z} / 2 \mathbb{Z}$, when $v_1 - v_0 \in \KK^{\times}$ and $\sqrt[3]{v_i} \in \KK$. In this case the
complement is isomorphic to a complex reflection group of type $G(3,1,2)$.
\item If $W_0$ has rank $1$ and type $\mathbb{Z} / 3 \mathbb{Z}$, when $\prod_i(X-u_i)$ is square-free and $\sqrt{u_i} \in \KK$. In this case the
complement is isomorphic to a complex reflection group of type $G(6,2,2)$.
\item If $W_0$ has rank $2$ and type $\mathbb{Z} / 2 \mathbb{Z} \times \mathbb{Z} / 3\mathbb{Z}$; in this case, we have a group-theoretic splitting,
and the
complement is $Z(W)$ which is cyclic of order $6$.
\item If $W_0$ has rank $2$ and type $G_4$, when the polynomial (\ref{eq:polminG4}) is square-free and $\sqrt[3]{u_0u_1u_2} \in \KK$. In this case the
complement is $Z(W)$ which is cyclic of order $6$.
\item If $W_0$ has rank $2$ and type $G(3,1,2)$, when
the polynomial
$$
\left( \prod_{(i,j) \in \{0,1\}\times \{ 0,1,2 \}} (X - v_i^2u_j^2) \right)\left(\prod_{i \neq j} (X + v_0v_1 u_i u_j) \right)
$$
is square-free and 
 $\sqrt{-v_0v_1 u_i u_j} \in \KK$ for $i \neq j$. In this case the
complement is $Z(W)$ which is cyclic of order $6$.
\end{enumerate}
\end{proposition}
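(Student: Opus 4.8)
The plan is to organize the proof by the rank of $W_0$. The two rank-$1$ cases require no new work: case (1) is exactly the computation carried out in Section~\ref{sect:G26wA1}, and case (2) is the one of Section~\ref{sect:G26wC3}, so for these I would simply invoke those results. The three remaining cases are the maximal (rank-$2$) parabolic subgroups of $W=G_{26}$, which up to conjugacy are $\langle s_1,s_2\rangle\cong G(3,1,2)$ (case (5)), $\langle s_2,s_3\rangle\cong G_4$ (case (4)), and $\langle s_1,s_3\rangle\cong\Z/2\Z\times\Z/3\Z$ (case (3)). Since $W$ is irreducible of exceptional type and none of these pairs is $(G_{33},D_4)$ or $(G_{25},(\Z/3\Z)^2)$, Theorem~\ref{theo:maxipars} applies: with $|Z(W)|=6$, it suffices in each case to produce $T\in\KK[X]$ with $T(z_{B_0})^{6}=z_{B_0}^{-|Z(W_0)|}$ inside $H_0$, and then to translate this into the stated parameter conditions by Lemma~\ref{lem:polfield}.

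For cases (4) and (5) I would follow the scheme of Section~\ref{sect:G32G4}: identify $|Z(W_0)|$ and the minimal polynomial of $z_{B_0}$ on $H_0$, then read off the root extractions. For $W_0=G_4$ one has $|Z(G_4)|=2$, so the requirement is $T(z_{B_0})^6=z_{B_0}^{-2}$; as $z_{B_0}=(\sigma_1\sigma_2)^3$ is annihilated by the polynomial (\ref{eq:polminG4}) (computed from the eigenvalues of $z_{B_0}$ on the irreducible representations of the Hecke algebra of $G_4$), Lemma~\ref{lem:polfield} with $a=6$, $b=-2$ asks for a cube root of $w^{-1}$ for each root $w$. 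The roots $u_i^3$ and $-u_i^3u_j^3$ furnish $u_i^{-1}$ and $-u_i^{-1}u_j^{-1}$ automatically, leaving only $\sqrt[3]{u_0u_1u_2}\in\KK$, which is the stated hypothesis (together with square-freeness). For $W_0=G(3,1,2)$ one has $|Z(G(3,1,2))|=3$, so the requirement is $T(z_{B_0})^6=z_{B_0}^{-3}$; here I would compute (in CHEVIE, or from the representation theory of the cyclotomic Hecke algebra of type $G(3,1,2)$ with parameters $v_0,v_1$ and $u_0,u_1,u_2$) that $z_{B_0}=(\sigma_1\sigma_2)^2$ is annihilated by the degree-$9$ polynomial displayed in case (5). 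Lemma~\ref{lem:polfield} with $a=6$, $b=-3$ then asks for a square root of $w^{-1}$: the factors $v_i^2u_j^2$ supply $v_i^{-1}u_j^{-1}$ for free, and the factors $-v_0v_1u_iu_j$ supply exactly the requirement $\sqrt{-v_0v_1u_iu_j}\in\KK$, matching the statement.

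Case (3) is cleaner and purely group-theoretic, which explains why no parameter condition appears. By \cite{TAYLORNORM} we have $N_0=W_0\times Z(W)$ with $W_0\cong\Z/2\Z\times\Z/3\Z$ abelian, so $\overline{N_0}\cong Z(W)$ is cyclic of order $6$ and $|Z(W_0)|=6$. Following the discussion preceding Theorem~\ref{theo:maxipars}, we have $z_B,z_{B_0}\in\hat{B}_0$ and $z_B^{6}z_{B_0}^{-6}\in\QQ0$. Since $z_B$ is central, the element $z_Bz_{B_0}^{-1}\in\widetilde{B}_0$ satisfies $(z_Bz_{B_0}^{-1})^6=z_B^6z_{B_0}^{-6}=1$ in $\widetilde{B}_0$, while its image in $\overline{N_0}$ equals that of $z_B$, hence is a generator. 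Thus $\langle z_Bz_{B_0}^{-1}\rangle$ lifts $\overline{N_0}$ and splits the short exact sequence~(\ref{ses}), and Lemma~\ref{lem:semidirect} yields $\widetilde{H}_0\simeq\overline{N_0}\ltimes H_0$ unconditionally.

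The main obstacle I anticipate lies in the input to cases (4) and (5): determining the minimal polynomial of $z_{B_0}$ on $H_0$, that is, the scalar by which this central element acts on each irreducible representation of the Hecke algebra of $W_0$. For $G_4$ this is already recorded in (\ref{eq:polminG4}), but for $G(3,1,2)$ it requires an explicit, machine-assisted computation in the cyclotomic Hecke algebra, and one must additionally verify that the resulting polynomials are square-free so that Lemma~\ref{lem:polfield} is applicable. Once these polynomials are known, the remaining root extractions are routine and reduce exactly to the conditions stated in the proposition.
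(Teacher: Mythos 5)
Your proposal matches the paper's proof essentially step for step: cases (1)--(2) are delegated to Sections~\ref{sect:G26wA1} and~\ref{sect:G26wC3}, case (3) is the group-theoretic splitting via $z_B z_{B_0}^{-1}$ coming from $|Z(W)|=|Z(W_0)|=6$ (valid since $N_0=W_0\times Z(W)$ here by \cite{TAYLORNORM}), and cases (4)--(5) combine the annihilating polynomial of $z_{B_0}$ (polynomial~(\ref{eq:polminG4}) for $G_4$, an explicit computation on the irreducible representations of the Hecke algebra of $G(3,1,2)$ for case (5)) with Lemma~\ref{lem:polfield}, with the same root-extraction reductions to $\sqrt[3]{u_0u_1u_2}$ and $\sqrt{-v_0v_1u_iu_j}$. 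The only detail you omit is the identification of the isomorphism types of the complements asserted in the statement (e.g.\ $G(6,2,2)$ in case (2), which the paper obtains with GAP4), a descriptive point not needed for the semidirect product decomposition itself.
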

\begin{proof} The proof of (1) is given in Section~\ref{sect:G26wA1}.
The proof of (2) is given in Section~\ref{sect:G26wC3}, and the
identification of the complement with the group $G(6,2,2)$ is done
using GAP4 algorithms for identifying the isomorphism type of small groups.
We now consider the case of maximal parabolic subgroups. We have $|Z(W)| = 6$,
$|Z(G(3,1,2))| = 3$, $|Z( G_4)| = 2$. When $W_0 = \mathbb{Z} / 2\mathbb{Z} \times \mathbb{Z} / 3\mathbb{Z}$, since $|Z(W)| = |Z (W_0)|$, we get a group-theoretic splitting.
In case $W_0$ has type $G_4$, we have $z_{B_0} = (\sigma_2 \sigma_3)^3$, 
and we know by Section~\ref{sect:G32G4} that it is annihilated by polynomial (\ref{eq:polminG4}),
and for applying Lemma \ref{lem:polfield} we need to be able to take 3rd roots of its
roots,
that is, we need to have $\sqrt[3]{u_0u_1u_2} \in \KK$.

 When $W_0$ has type $G(3,1,2)$, we have $z_{B_0} = \sigma_1 \sigma_2 \sigma_1 \sigma_2$. By computing
its value on the irreducible representations of the generic Hecke algebra of $G(3,1,2)$, we get that it is annihilated
by the polynomial of the statement.
In order to apply Lemma \ref{lem:polfield} we thus need this polynomial to be square-free, and also
to have square roots of its roots, that is,
we need to have $\sqrt{-v_0v_1 u_i u_j} \in \KK$.
\end{proof}

\begin{proposition} Let $W = G_{32}$, and $u_0,u_1,u_2 \in \KK^{\times}$ the
defining parameters.
Let $W_0$ be a parabolic subgroup of
$W$. Then $\widetilde{H}_0 \simeq \overline{N_0} \ltimes H_0$ in the following cases:
\begin{enumerate}
\item If $W_0$ has rank $1$, when $\prod_i (X-u_i)$ is square-free and
 $\sqrt{u_i} \in \KK$. In this case the
complement is isomorphic to a complex reflection group of type $G_{26}$.
\item If $W_0$ has rank $2$ and type $G_4$, when
the polynomial (\ref{eq:polminG4}) is square-free and
 $\sqrt[3]{u_0u_1u_2} \in \KK$.
 In this case the
complement is isomorphic to a complex reflection group of type $G_5$.

\item If $W_0$ has rank $2$ and type $\mathbb{Z} / 3\mathbb{Z} \times \mathbb{Z} / 3\mathbb{Z}$, when 
$\prod_{0 \leq i,j \leq 2 } (X - u_i^3 u_j^3)$
and $\prod_i (X - u_i^3)$ are square-free 
and
$\sqrt{u_i} \in \KK$, $0 \leq i \leq 2$. In this case the
complement is isomorphic to a complex reflection group of type $G(6,1,2)$.
\item If $W_0$ has rank $3$ and type $G_4 \times \mathbb{Z} / 3\mathbb{Z}$, then there is a group-theoretic splitting and the complement
is $Z(W)$ which is cyclic of order $6$.
\item If $W_0$ has rank $3$ and type $G_{25}$, when 
the polynomial
$$
\left(\prod_{\{ i,j, k \} = \{0,1,2 \}} (X - u_i^2u_j^4 u_k^6)
(X - u_i^3u_j^3 u_k^6)
 (X - u_i^4 u_j^4 u_k^4)\right) $${}$$
 \times 
\left(\prod_{i \neq j} (X - u_i^4u_j^8)(X-u_i^6 u_j^6)\right)
\left(\prod_{i} (X - u_i^{12})\right)
\left( X^3 - (u_0u_1u_2)^{12} \right)
$$
is split and square-free (which implies that $\mu_3(\C) \subset \KK$), and we have $\sqrt{u_i} \in \KK$, $0 \leq i \leq 2$.
In this case the
complement is $Z(W)$ which is cyclic of order $6$.
\end{enumerate}
\end{proposition}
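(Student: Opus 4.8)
The plan is to organize the proof by the rank of $W_0$, observing that items (1), (2) and (3) have already been established earlier in Section~\ref{sect:G32}. Indeed, the rank $1$ case is treated in the first subsection of Section~\ref{sect:G32}, where the complement $U_0 \cong G_{26}$ is generated by $z_1 = (\sigma_1\sigma_2)^3$, $\sigma_3$ and $\sigma_4$ and is lifted after twisting $z_1$ by a polynomial $T(\sigma_1)$ with $T(\sigma_1)^2 = \sigma_1^{-3}$; the type $G_4$ case is Section~\ref{sect:G32G4}, with complement $G_5$; and the type $\mathbb{Z}/3\mathbb{Z}\times\mathbb{Z}/3\mathbb{Z}$ case is the corresponding subsection, with complement $G(6,1,2)$. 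In each of these three cases the square-freeness hypotheses together with the stated availability of roots in $\KK$ are exactly the input required by Lemma~\ref{lem:polfield} for the twisting polynomials to exist, so I would simply cite those subsections.

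It then remains to handle the two \emph{maximal} parabolic subgroups of the rank $4$ group $W = G_{32}$, namely items (4) and (5), for which I would invoke Theorem~\ref{theo:maxipars}; here $|Z(W)| = 6$. For item (4), where $W_0 = G_4\times\mathbb{Z}/3\mathbb{Z}$, one has $|Z(W_0)| = |Z(G_4)|\cdot 3 = 6 = |Z(W)|$, and by \cite{TAYLORNORM} (this pair is not one of the exceptional ones) $N_0 = W_0\times Z(W)$, so $\overline{N_0}\cong Z(W)$ is cyclic of order $6$. Since $z_B$ is central in $B$ it commutes with $z_{B_0}$, and $z_B^{6}z_{B_0}^{-6}\in\QQ0$ because these are the full loops in the respective hyperplane complements; hence inside $\widetilde{B}_0$ the element $z_Bz_{B_0}^{-1}$ satisfies $(z_Bz_{B_0}^{-1})^{6}=1$ while its image generates $\overline{N_0}$, so $\langle z_Bz_{B_0}^{-1}\rangle$ realizes a group-theoretic splitting of~\eqref{ses} and we conclude by Lemma~\ref{lem:semidirect}. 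This is the uniform mechanism available whenever $|Z(W)| = |Z(W_0)|$.

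For item (5), where $W_0 = G_{25}$, one has $|Z(W_0)| = 3$, and since $(G_{32},G_{25})$ is \emph{not} one of the two exceptional pairs of Theorem~\ref{theo:maxipars}, it suffices to exhibit $T\in\KK[X]$ with $T(z_{B_0})^{6} = z_{B_0}^{-3}$ inside $H_0$. To do this I would first determine an annihilating polynomial for $z_{B_0} = (\sigma_1\sigma_2\sigma_3)^4$ in $H_0$: being central in the braid group of $G_{25}$, it acts as a scalar on each irreducible representation of the generic Hecke algebra of $G_{25}$, and these scalars are monomials in $u_0,u_1,u_2$; computing them with CHEVIE yields precisely the polynomial displayed in item (5). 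Lemma~\ref{lem:polfield}, applied with $a = 6$ and $b = -3$, then shows that such a $T$ exists exactly when that polynomial is split and square-free and $\KK$ contains a $6$th root of $v^{-3}$ for each root $v$. Once $\sqrt{u_i}\in\KK$, every displayed root is a perfect square in the $u_i$ and their square roots (for instance $u_i^3u_j^3u_k^6 = (\sqrt{u_i}\sqrt{u_j}\,u_iu_ju_k^3)^2$, while $u_i^2u_j^4u_k^6$ and the remaining monomial roots are already squares), so $v^{-3}$ is automatically a $6$th power; the factor $X^3 - (u_0u_1u_2)^{12}$ imposes $\mu_3(\C)\subset\KK$ for splitting but poses no further root obstruction, since $\bigl((u_0u_1u_2)^{-2}\bigr)^6$ is a $6$th root of $(u_0u_1u_2)^{-12}$. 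Thus the hypotheses of item (5) are exactly what is needed.

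The main obstacle is the explicit computation underlying item (5): one must correctly list the eigenvalues of the central element $z_{B_0}$ over all irreducible representations of the Hecke algebra of $G_{25}$ and check that, under the square-freeness assumption, they account for the displayed factorization. By contrast, the other four cases are either already proved in Section~\ref{sect:G32} or reduce, through the equality of the orders of the two centers, to the uniform group-theoretic splitting argument.
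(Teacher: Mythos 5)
Your proposal is correct and follows essentially the same route as the paper: items (1)--(3) are quoted from Section~\ref{sect:G32}, item (4) is handled by the group-theoretic splitting $\zeta_6 \mapsto z_B z_{B_0}^{-1}$ coming from $|Z(W_0)|=6=|Z(W)|$, and item (5) by computing the annihilating polynomial of $z_{B_0}$ from its scalars on the irreducible representations of the Hecke algebra of $G_{25}$ and applying Lemma~\ref{lem:polfield} with $a=6$, $b=-3$. Your extra verification that each root is a square once $\sqrt{u_i}\in\KK$ (and that the cubic factor only requires $\mu_3(\C)\subset\KK$) just makes explicit what the paper summarizes as ``this translates into the conditions of the statement''; the only blemish is the phrase ``$\bigl((u_0u_1u_2)^{-2}\bigr)^6$ is a $6$th root of $(u_0u_1u_2)^{-12}$'', where you mean that $(u_0u_1u_2)^{-2}$ itself is such a root.
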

\begin{proof}
The cases where $W_0$ has rank $1$ or $2$ have been dealt with in
Section~\ref{sect:G32}; we can thus assume that $W_0$ is a maximal parabolic subgroup
of rank $3$. In this case, we need to find a polynomial $P$ such that $P(z_{B_0})^{|Z(W)|} = z_{B_0}^{|Z(W_0)|}$. 
We first consider the case where $W_0 = G_4 \times \mathbb{Z} / 3\mathbb{Z}$.

Then $|Z( W_0)| = 6 = |Z( W)|$,
and we get a group-theoretic splitting $Z(W_0) \to \widetilde{B}_0$ via $\zeta_6 \mapsto z_B z_{B_0}^{-1}$
with $z_{B_0} = (\sigma_1 \sigma_2)^3 \sigma_4$.

We then  consider the case where $W_0$ has type $G_{25}$. Then
$|Z( W_0)|=3$, $|Z( W)|=6$, and from the explicit computation of the
values of $z_{B_0}$ on the irreducible representations of the generic Hecke algebra of type $G_{25}$ (see e.g. \cite{MARINWAGNER} \S 5.3) we get that $z_{B_0}$ is annihilated by the polynomial
of the statement. Therefore the condition of Lemma~\ref{lem:polfield} is that this
polynomial is split and square-free in $\KK$, and 
$\KK$ must contain
the square roots of all its roots. This translates into the conditions of the statement.

\end{proof}

\end{document}